\documentclass[12pt,a4paper]{amsart}

\usepackage{amsmath,amsfonts,amssymb}

\usepackage[hmargin=2cm,vmargin=2cm]{geometry}

\usepackage[hyperfootnotes=false,colorlinks=true,linkcolor=blue,%
citecolor=purple,filecolor=magenta,urlcolor=cyan]{hyperref}
\usepackage{nameref,zref-xr}

\usepackage{amsthm,amscd}
\usepackage{epsfig}
\usepackage{color}
\usepackage[all]{xy}
\usepackage{tikz}
\usepackage{graphics, setspace}
\usepackage{breqn}
\usepackage{amstext}
\usepackage{array}

\newcommand{\CC}{{\mathbb{C}}}

\newcommand{\PP}{{\mathbb{P}}}

\newcommand{\F}[2]{{F_{#1}^{#2}}}

\def\A{{\mathcal A}}

\def\F{{\mathcal F}}

\def\H{{\mathcal H}}
\def\I{{\mathcal I}}

\def\O{{\mathcal O}}

\def\T{{\mathcal T}}

\def\res{{\mathrm{res}}}

\newcommand{\p}{{\partial}}

\newcommand{\mbC}{\mathbb C}

\def\d{{\partial}}

\newcommand{\mcT}{\mathcal{T}}

\newcommand{\bt}{{\bf t}}

\newtheorem{theorem}{Theorem}[section]
\newtheorem*{theorem*}{Theorem}
\newtheorem{proposition}[theorem]{Proposition}
\newtheorem{lemma}[theorem]{Lemma}
\newtheorem{corollary}[theorem]{Corollary}
\newtheorem*{corollary*}{Corollary}

\newtheorem{example}[theorem]{Example}
\newtheorem{remark}[theorem]{Remark}

\usepackage{color}

\def\&{\vspace{-5pt}&}

\makeatletter
\newsavebox{\@brx}
\newcommand{\llangle}[1][]{\savebox{\@brx}{\(\m@th{#1\langle}\)}%
  \mathopen{\copy\@brx\kern-0.5\wd\@brx\usebox{\@brx}}}
\newcommand{\rrangle}[1][]{\savebox{\@brx}{\(\m@th{#1\rangle}\)}%
  \mathclose{\copy\@brx\kern-0.5\wd\@brx\usebox{\@brx}}}
\makeatother

\allowdisplaybreaks

\numberwithin{equation}{section}

\begin{document}
\title{Open extension of a genus zero Hurwitz-Frobenius manifold}
\date{\today}
\author{Alexey Basalaev}
\address{A. Basalaev:\newline Faculty of Mathematics, HSE University, Usacheva str., 6, 119048 Moscow, Russian Federation}
\email{abasalaev@hse.ru}
\author{Anton Rarovskii}
\address{A.Rarovskii:\newline Faculty of Mathematics, HSE University, Usacheva str., 6, 119048 Moscow, Russian Federation and
Scientific School named after I.M. Krichever, Skolkovo Institute of Science and Technology, Bolshoy Boulevard 30 bld. 1, 121205 Moscow, Russian Federation
}
\email{ararovskiy@hse.ru}

\date{\today}

 \begin{abstract}
 Every Dubrovin--Frobenius manifold provides a solution to the WDVV equation, that was initially formulated in the study of the moduli space of curves.
 During the last two decades special attention was given to the "open" version of the moduli space of curves. Its genus zero intersection theory is governed by the system of equations called \textit{open WDVV equation}, that extends "classical" WDVV equation.

 In this note we study Dubrovin--Frobenius manifold structures on the Hurwitz spaces of genus zero. We construct explicitly the solutions to open WDVV equation for these Dubrovin--Frobenius manifolds and show that these solutions appear as the restriction of the Dubrovin--Frobenius structure to its discriminant.
 \\
 \\
 Keywords: open WDVV equation, Dubrovin--Frobenius manifolds, Hurwitz spaces.
 \\
 MSC codes: 14H70, 14N35.
 \end{abstract}
 \maketitle

\section{Introduction}
Dubrovin--Frobenius manifolds, moduli spaces and integrable systems are known to be closely related to each other since the early 90s (cf. \cite{D2, DVV, FSZ, Z} etc.). WDVV equation was a cornerstone playing an important role in all three topics. The function $F^c = F^c(t_1,\dots,t_n)$ is a solution to WDVV equation with the metric $\eta$ if for any fixed $1 \le \alpha,\beta,\gamma,\delta \le n$ holds
\begin{align*}
\sum_{\gamma,\delta=1}^n \frac{\d^3 F^c}{\d t_\alpha\d t_\beta\d t_\mu}\eta^{\mu\nu}\frac{\d^3 F^c}{\d t_\nu\d t_\gamma \d t_\delta}
=
\sum_{\gamma,\delta=1}^n \frac{\d^3F^c}{\d t_\gamma\d t_\beta\d t_\mu}\eta^{\mu\nu}\frac{\d^3 F^c}{\d t_\nu\d t_\alpha \d t_\delta}.
\end{align*}
This equation holds true for the generating functions of the intersection number of the Deligne-Mumford compactification of the moduli space of curves. Geometric framework for the study of WDVV equation was given by B. Dubrovin.
\\
\\
\noindent
\textbf{Dubrovin--Frobenius manifolds} were introduced by B.~Dubrovin in the early 90s (cf. \cite{D2}). This is a complex manifold $M$ equipped with an associative and commutative product $\circ: \T_M \otimes_M \T_M \to \T_M$ on the holomorphic tangent sheaf, a flat pairing $\eta: \T_M \otimes_M \T_M \to \O_M$ and a flat unit vector field. These data all together should satisfy the certain integrability condition. In particular, there should exist the special coordinates $t_1,\dots,t_n$, called \textit{flat} and a special function $\F \in \O_M$, called \textit{potential}, such that the unit vector field coincides with $\dfrac{\p}{\p t_1}$ and
\[
    \eta( \frac{\p}{\p t_\alpha},\frac{\p}{\p t_\beta}) = \frac{\p^3 \F}{\p t_1 \p t_\alpha \p t_\beta }  = \eta_{\alpha\beta} \in \CC,
    \quad
    \frac{\p}{\p t_\alpha} \circ \frac{\p}{\p t_\beta} = \sum_{\gamma,\delta=1}^n \frac{\p^3 \F}{\p t_\alpha \p t_\beta \p t_\gamma} \eta^{\gamma,\delta} \frac{\p}{\p t_\delta}.
\]

Associativity of the product $\circ$ is equivalent to WDVV equation.
Sometimes the Dubrovin--Frobenius manifold itself is built starting from the potential given --- like it is in Gromov--Witten theory. In this case the formulae above should be read from the right to the left.
In the ``geometric'' cases the pairing $\eta$ and the product $\circ$ are defined, but the potential and the flat coordinates have to be found.
Examples of such Dubrovin--Frobenius manifolds are Saito--Frobenius (cf. \cite{ST}) and Hurwitz--Frobenius manifolds.

\subsection*{Hurwitz--Frobenius manifolds} It was observed by B.~Dubrovin \cite[Lecture 5]{D2} that the space $\H_{g,K}$ of ramified coverings of $\PP^1$ by a genus $g$ Riemann surface with the prescribed ramification profile $K$ can be endowed with a structure of Dubrovin--Frobenius manifold. Such structures were investigated in \cite{S,M17,B14,PS} etc..
B. Dubrovin constructs $\eta$ and $\circ$ via the certain residue calculus that we introduce later.

In this text we consider the space $\H_{0;K}$ with $g=0$ and $K = \{k_0,\dots, k_N\}$. Its points are the functions $\lambda: \PP^1 \to \PP^1$
\[
 \lambda(z) := \frac{z^{k_0}}{k_0} + \sum _{a=1}^{k_0-1} v_{0,a} z^{a-1} + \sum_{i=1}^N\sum _{a=1}^{k_i} \frac{v_{i,a}}{ \left(z-v_{i,0}\right)^{a} }
\]
with arbitrary complex coefficients $v_{i,a}$ such that $v_{i,0} \neq v_{j,0}$ for $i \neq j$ and $v_{i,k_i} \neq 0$. The complex parameters $v_{i,a}$ are the coordinates of $\H_{0;K}$. The coordinates $v_\bullet$ look essential, however these are not flat. The flat coordinates $t_{i,\alpha} = t_{i,\alpha}(v_{i,\bullet})$ are obtained via the certain residue calculus involving the function $\lambda$. In particular, we have $v_{i,0} = t_{i,0}$ and $v_{i,k_i} = (t_{i,k_i}/k_i)^{k_i}$, while the formulae for the other flat coordinates are much more complicated.

This is well-known that the Dubrovin--Frobenius manifold structure of $\H_{0, K}$ has poles alongside the hypersurfaces $t_{i,0} = t_{j,0}$ and $t_{i,k_i} = 0$. 
Denote by $\F^K = \F^K(t_{i,\alpha})$ the potential of $\H_{0,K}$. Then this function can not be generally extended to any of the hypersurfaces considered (see \cite{PS} and examples in Section~\ref{section: examples} below).

However this is well-known for the specialists that Dubrovin--Frobenius manifold structure of $\H_{0; K}$ extends to the locus $t_{i,0} = t_{i,1} = \dots = t_{i, k_i} = 0$ for any $i > 0$.

\subsection*{Open WDVV equation}
Motivated by the studies of the open Gromov--Witten theories the new system of PDEs called \textit{open WDVV} was introduced in \cite{HS12}. 
For a fixed WDVV solution $F^c = F^c(t_1,\dots,t_n)$, the open WDVV equation  is the system of equations on a function $F^o = F^o(t_0,t_1,\dots,t_n)$ depending on an additional variable $t_0$, 
\begin{align}
\label{eq:open WDVV}
\sum_{\gamma,\delta=1}^n \frac{\d^3F^c}{\d t_\alpha\d t_\beta\d t_\mu}\eta^{\mu\nu}\frac{\d^2F^o}{\d t_\nu\d t_\gamma}+\frac{\d^2F^o}{\d t_\alpha\d t_\beta}\frac{\d^2F^o}{\d t_0\d t_\gamma}
=
\sum_{\gamma,\delta=1}^{n}\frac{\d^3F^c}{\d t_\gamma\d t_\beta\d t_\mu}\eta^{\mu\nu}\frac{\d^2F^o}{\d t_\nu\d t_\alpha}+\frac{\d^2F^o}{\d t_\gamma\d t_\beta}\frac{\d^2F^o}{\d t_0\d t_\alpha},
\end{align}
that should hold for any fixed $0 \le \alpha,\beta,\gamma \le n$.

Similarly to ``classical'' WDVV equation, open WDVV equation is associativity equation of a product defined by
\begin{equation}\label{eq: open extension product}
\frac{\p}{\p t_\alpha} \circ \frac{\p}{\p t_\beta} = \sum_{\gamma,\delta=1}^{n} \frac{\p^3 F^c}{\p t_\alpha \p t_\beta \p t_\gamma} \eta^{\gamma\delta} \frac{\p}{\p t_\delta} + \frac{\p^2 F^o}{\p t_\alpha \p t_\beta} \frac{\p}{\p t_0}, \quad 0 \le \alpha,\beta \le n.
\end{equation}

From this point of view the function $F^o$, solving open WDVV equation, defines an \textit{open extension} of a Dubrovin--Frobenius manifold given by $F^c$.

For $F^c$ being Dubrovin--Frobenius potential of type $A_n$ or $D_n$ the solutions to open WDVV equation were investigated in \cite{BB21,BB19}. In particular, for the $A_n$ and $D_n$ Dubrovin--Frobenius potentials $F^c_{A_n}$ and $F^c_{D_n}$ the authors constructed the functions $F^o_{A_n}$ and $F^o_{D_n}$, being solutions to open WDVV equation. The function $F^o_{A_n}$ appeared to be polynomial in $t_0,t_1,\dots,t_n$ and $F^o_{D_n}$ polynomial in $t_1,\dots,t_n$, but Laurent polynomial in $t_0$.

Additionally we assume that $F^o(t_0,\dots,t_n)$ is quasihomogeneous (see Section~\ref{section: main theorem}) and satisfies the conditions
\[\frac{\d^2 F^o}{\d t_{0}\d t_{n}}=0,\qquad \frac{\d^2 F^o}{\d t_{0,1} \d t_{N+1,0}}=1 \]
as it appears in open Gromov-Witten theory (cf. \cite{PST14}, \cite{BCT19}).

\subsection*{Open extensions of the Hurwitz--Frobenius manifolds}
Main theorems of this paper are the following
\\
\\
\begin{theorem}\label{theorem: main}
    For any $N \ge 0$ holds.
    \begin{enumerate}
        \item[(i)] Let $L := \lbrace k_0,\dots,k_N,k_{N+1} \rbrace$. Then  Dubrovin--Frobenius manifold product and pairing of $\H_{0; L}$ extend to the locus $t_{N+1,1} = \dots = t_{N+1,k_{N+1}} = 0$.
        \item[(ii)] Let $k_{N+1} \ge 2$. Then the product over this locus coincides with the open extension of the Hurwitz-Frobenius manifold $\H_{0; K}$ for $K = \lbrace k_0,\dots,k_N \rbrace$.

        Respective open WDVV solution is given by the pair
        \[
            (F^c, F^o) := \left( \F^K, \frac{\p \F^L}{\p t_{N+1, 1} } \mid_{t_{N+1,1} = \dots = t_{N+1,k_{N+1}} = 0} \right).
        \]
    \end{enumerate}
\end{theorem}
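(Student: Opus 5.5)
We will work throughout with Dubrovin's residue formulae for $\H_{0;L}$:
\[
\eta(\p',\p'') = \sum_{\text{crit } p} \res_{p} \frac{\p'(\lambda dz)\,\p''(\lambda dz)}{d\lambda},\qquad
c(\p',\p'',\p''') = \sum_{\text{crit } p} \res_{p} \frac{\p'(\lambda dz)\,\p''(\lambda dz)\,\p'''(\lambda dz)}{dz\, d\lambda}.
\]
These can be rewritten as residues at the poles of $\lambda$ ($z=\infty$ and $z=v_{i,0}$) in terms of local parameters. The flat coordinates $t_{i,\alpha}$ come from expansions of $\lambda^{1/k_i}$ near the poles, i.e. from the local coordinate $\zeta_i$ with $\lambda = \zeta_i^{-k_i}$.

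\textbf{Step 1: the flat coordinates in the block $N+1$.} Write $\lambda_L = \lambda_K + \mu$, where
\[
\mu(z) = \sum_{a=1}^{k_{N+1}} v_{N+1,a}(z-v_{N+1,0})^{-a}.
\]
Express $t_{N+1,1},\dots,t_{N+1,k_{N+1}}$ through the $v_{N+1,\bullet}$. Near $z=v_{N+1,0}$ the local parameter is $\zeta = \mu^{-1/k_{N+1}}(1+\dots)$, and the flat coordinates should be homogeneous in the $v_{N+1,a}$ with $t_{N+1,k_{N+1}}\sim v_{N+1,k_{N+1}}^{1/k_{N+1}}$. The claim to check is that the locus $t_{N+1,1}=\dots=t_{N+1,k_{N+1}}=0$ corresponds to $\mu\to 0$, with $t_{N+1,0}=v_{N+1,0}$ free.

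The other flat coordinates $t_{i,\alpha}$ with $i\le N$ are residues of powers of $\lambda_L$ at the other poles. As $\mu\to0$ they tend to the flat coordinates of $\lambda_K$, with corrections analytic in the small parameters.

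\textbf{Step 2: extension of the product and pairing, part (i).} Compute the residue formulas in flat coordinates and check that they are holomorphic as $t_{N+1,\bullet}\to 0$.

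The vectors $\p/\p t_{i,\alpha}$ for $i\le N$ act as $\p\lambda_L$, and these tend to $\p\lambda_K$. The new vectors $\p/\p t_{N+1,\alpha}$ are given by $\p(\lambda dz)$ with poles only at $v_{N+1,0}$. In the flat normalisation they are $\zeta^{\alpha-1}d\zeta$-type primary differentials, which remain finite in the limit. This is the scaling argument that makes $\p/\p t_{N+1,1}$ and $\p/\p t_{N+1,0}$ survive.

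All structure constants are then residues of rational expressions that stay finite. The pairing degenerates in the expected way: $\eta(\p_{N+1,a},\p_{N+1,b})$ is nonzero only when $a+b=k_{N+1}+1$ (with $a,b\ge 1$) or for the pair $(0,\text{unit})$.

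\textbf{Step 3: identification with the open extension, part (ii).} On the locus, the tangent space splits as $T\H_{0;K}\oplus\langle \p_{N+1,0},\p_{N+1,1},\dots\rangle$. Two facts need to be shown:
\begin{itemize}
\item[(a)] the span of the $\p_{i,\alpha}$ ($i\le N$) together with $\p_{N+1,0}$ is closed under $\circ$ modulo directions that decouple;
\item[(b)] products of the $\p_{i,\alpha}$ restricted to the locus equal the $\H_{0;K}$ products plus a multiple of one extra direction.
\end{itemize}
Concretely, $c^L_{\alpha\beta\gamma}$ with $\alpha,\beta,\gamma$ in the old block restricts to $c^K_{\alpha\beta\gamma}$, because the residue contribution at $v_{N+1,0}$ vanishes as $\mu\to0$. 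The coefficient along the new direction is
\[
\sum_\delta c^L_{\alpha\beta\delta}\,\eta^{\delta,(N+1,0)} = c^L_{\alpha\beta,(N+1,1)} = \frac{\p^2}{\p t_\alpha\p t_\beta}\,\frac{\p \F^L}{\p t_{N+1,1}}.
\]
Here we use that the dual of $\p_{N+1,0}$ under $\eta$ is $\p_{N+1,1}$, and $k_{N+1}\ge 2$ is needed so that $\p_{N+1,1}$ is not the top vector paired with $\p_{N+1,0}$ in a degenerate way. Set $t_0:=t_{N+1,0}$. The product \eqref{eq: open extension product} with $F^o=\p_{t_{N+1,1}}\F^L|$ then coincides with the restricted product.

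Associativity of the restricted product follows from associativity on $\H_{0;L}$ by continuity. Open WDVV then follows. The normalisations $\p^2F^o/\p t_{0,1}\p t_{N+1,0}=1$ come from $\eta$, and quasihomogeneity is inherited from the Euler field.

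\textbf{Main obstacle.} The hardest step is the closure of the products $\p_{N+1,0}\circ\p_{i,\alpha}$ and $\p_{N+1,0}\circ\p_{N+1,0}$ inside the span in (a). It amounts to showing that the residue at the collapsing pole yields exactly $\p^2F^o/\p t_0\p t_\alpha$ with no components along $\p_{N+1,a}$ for $a\ge2$. I would first attempt this by a direct local expansion in $\zeta$ at $z=v_{N+1,0}$, using that $\lambda_K$ is regular there.
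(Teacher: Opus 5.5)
Your proposal has a genuine gap in Step 2, and the associativity step depends on it.

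\textbf{The false claim.} Step 2 asserts that all flat-frame structure constants of $\H_{0;L}$ stay finite on the locus, and Step 3 derives associativity ``by continuity'' from that. The claim is false, so the continuity argument has nothing to rest on.

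Finiteness of the $\p\lambda$'s is not enough. Put $k=k_{N+1}\ge 2$. Then $k+1$ critical points collapse into the pole $v_{N+1,0}$, and there $1/\lambda'=-\frac{(z-v_{N+1,0})^{k+1}}{k\,v_{N+1,k}}(1+\dots)$ carries the factor $1/v_{N+1,k}$. A concrete instance, on $t_{N+1,1}=\dots=t_{N+1,k-1}=0$:
\begin{itemize}
\item $\p_{t_{N+1,1}}\lambda=(z-v_{N+1,0})^{-1}$ and $\p_{t_{N+1,k}}\lambda=(t_{N+1,k}/k)^{k-1}(z-v_{N+1,0})^{-k}$;
\item the residue at $v_{N+1,0}$ then gives $\langle \p_{t_{N+1,1}},\p_{t_{N+1,1}},\p_{t_{N+1,k}}\rangle = 1/t_{N+1,k}+O(1)$.
\end{itemize}
This is the term $\tfrac12 t_{1,1}^2\log t_{1,3}$ in Example 1. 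It forces $\p_{t_{N+1,1}}\circ\p_{t_{N+1,1}}$ to contain $\frac{k}{t_{N+1,k}}\p_{t_{N+1,2}}$, which blows up.

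So what extends is only a projected product. Take $X,Y$ in $V=\mathrm{span}\{\p_{t_{i,\alpha}}:(i,\alpha)\in I\}$, compute $X\circ Y$, and drop the $\p_{t_{N+1,a}}$ components for $a\ge 1$; call this $P(X\circ Y)$. This is what the paper's $\pi_\ast$ encodes.

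\textbf{What is needed instead.} For $P$ to give the open-extension product and to be associative, you need three facts.
\begin{itemize}
\item[(a)] For $\alpha,\beta\in I$ the dropped components vanish on the locus. That is, $\langle\p_{t_\alpha},\p_{t_\beta},\p_{t_{N+1,0}}\rangle\to 0$ and $\langle\p_{t_\alpha},\p_{t_\beta},\p_{t_{N+1,b}}\rangle\to 0$ for $b\ge 2$. The first of these is the $\p_{t_{N+1,1}}$-component, which your ``$a\ge 2$'' formulation leaves out.
\item[(b)] The ideal property: $\langle\p_{t_{N+1,0}},\p_{t_\beta},\p_{t_\gamma}\rangle\to 0$ for $\gamma$ in sectors $\le N$. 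This is exactly why the $F^c$-summand disappears when an index equals $(N+1,0)$. Your proposal does not address it.
\item[(c)] The $V$-components of $\p_{t_{N+1,a}}\circ\p_{t_\gamma}$, for $a\ge 1$ and $\gamma\in I$, stay bounded.
\end{itemize}
From these, $P((X\circ Y)\circ Z)=P(P(X\circ Y)\circ Z)+o(1)$. That estimate, not continuity of the full product, gives associativity and hence open WDVV.

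\textbf{How to close it in your framework.} All three facts follow from a pole-order count at $v_{N+1,0}$. Use the following inputs:
\begin{itemize}
\item $\p_{t_{N+1,1}}\lambda=(z-v_{N+1,0})^{-1}$ holds exactly, because the $t_{N+1,\alpha}$ with $\alpha\ge 2$ do not involve $v_{N+1,1}$;
\item $\p_{t_{N+1,0}}\lambda/\lambda'=-1+\lambda_K'/\lambda'$;
\item $1/\lambda'$ vanishes to order $k+1$ at $v_{N+1,0}$;
\item the flat coordinates of sectors $\le N$ are independent of $v_{N+1,\bullet}$ (Remark~\ref{remark: flat coordinates are sector-wise}). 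So $\p_{t_{i,\alpha}}\lambda_L=\p_{t_{i,\alpha}}\lambda_K$ holds exactly, not just up to corrections.
\end{itemize}

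\textbf{Comparison with the paper.} The paper gets (a) and (b) algebraically rather than by residues. It computes the products with $w_{N+1}$ in $\A_L$ (Lemma~\ref{lemma: products with wN+1}), deduces that $\p_{t_{N+1,0}}$ spans an ideal, and identifies the quotient by that ideal with $\A_K$.
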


\begin{remark}
Note that part (i) above does not involve setting $t_{N+1,0}$ to zero.
\end{remark}

\begin{remark}
Note that the function $F^c$ in part (ii) above is the potential of the Hurwitz--Frobenius manifold with a shorter ramification profile rather then that used for $F^o$. We have by construction that $F^o$ depends also on the variable $t_{N+1,0}$ that is not a variable of $F^c$. This is exactly the additional variable of the open WDVV equation.
\end{remark}

Projections of the Dubrovin--Frobenius manifold structure to a submanifold were investigated in details by I. Strachan \cite{St01, St04}. However the structure we obtain in the theorem above is not Dubrovin--Frobenius.

Our next theorem provides an explicit formula for the open potential above.
\begin{theorem} \label{theorem: expl form}
    Denote $\widetilde{\lambda}(z,\bt) := \left.\lambda(z,\bt) \right|_{t_{N+1,a} = 0, \ a \ge 1}$. We have
    \begin{equation*}
        F^o = \left. \left(\int \widetilde{\lambda}(z,\bt) dz\right) \right|_{z = t_{N+1,0}} 
        + \sum_{\substack{\alpha_1,\dots,\alpha_{k_0-1} \ge 0 \\ \sum_{i=1}^{k_0-1} (k_0+1-i) \alpha_i = k_0+1}} (|\alpha|-2)! \prod_{i=1}^{k_0-1} \frac{t_{0,i}^{\alpha_i}}{\alpha_i!}.
    \end{equation*}
\end{theorem}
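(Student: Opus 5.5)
Since $F^o$ is defined in Theorem~\ref{theorem: main}(ii) as $\p\F^L/\p t_{N+1,1}$ restricted to the locus $t_{N+1,a}=0$, $a\ge1$, I would compute its second derivatives and then integrate. The computation uses Dubrovin's residue formula for the third derivatives of the Hurwitz potential:
\[
\frac{\p^3 \F^L}{\p t_\alpha \p t_\beta \p t_\gamma} = \sum_{\lambda'(p)=0} \res_{p} \frac{\p_\alpha(\lambda dz)\,\p_\beta(\lambda dz)\,\p_\gamma(\lambda dz)}{dz\, d\lambda}.
\]
For $\gamma=(N+1,1)$, the flat coordinate $t_{N+1,1}$ is (up to normalisation) the residue of $\lambda\,dz$ at $z=t_{N+1,0}$. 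Hence $\p_{t_{N+1,1}}(\lambda dz)$ is the primary differential $dz/(z-t_{N+1,0})$.

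The plan is as follows.

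\textbf{Step 1.} Write $\p^2F^o/\p t_\alpha\p t_\beta$ as a sum of residues over the critical points of $\lambda$, and deform the contour to the poles of the integrand instead. Those poles sit at the $v_{i,0}$, at $\infty$, and at $z=t_{N+1,0}$.

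\textbf{Step 2.} Take the limit $t_{N+1,a}\to0$, $a\ge1$. As $v_{N+1,a}\to0$, the $(N+1)$-st pole of $\lambda$ disappears and $\lambda\to\widetilde\lambda$. Near $z=t_{N+1,0}$, however, there is an extra cluster of $k_{N+1}+1$ critical points that collide. I would show that the contribution of this cluster, combined with the simple pole of $dz/(z-t_{N+1,0})$, becomes $\p_\alpha\widetilde\lambda\,\p_\beta\widetilde\lambda/\widetilde\lambda'$ evaluated at $z=t_{N+1,0}$, up to the terms coming from $\infty$.

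The expected outcome for $\alpha,\beta$ among the closed variables is
\[
\frac{\p^2F^o}{\p t_\alpha\p t_\beta}=\p_\alpha\p_\beta\!\left(\int\widetilde\lambda\,dz\right)\Big|_{z=t_{N+1,0}}+(\text{correction from }\infty).
\]
Here I would use that the flat coordinates $t_{i,\alpha}$ with $i\le N$ restrict to the flat coordinates of $\H_{0;K}$ on the locus, so that $\widetilde\lambda$ is exactly the superpotential of $\H_{0;K}$. Derivatives with $t_{N+1,0}$ give $\widetilde\lambda$ and its $t$-derivatives at $z=t_{N+1,0}$. This is consistent with $\p_{t_{N+1,0}}\int\widetilde\lambda\,dz|_{z=t_{N+1,0}}=\widetilde\lambda(t_{N+1,0})$, and $t_{N+1,0}$ is indeed the open variable.

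\textbf{Step 3.} Identify the correction term from $\infty$. It depends only on $t_{0,1},\dots,t_{0,k_0-1}$, because near $\infty$ only the polynomial part of $\widetilde\lambda$ matters. By quasihomogeneity it must be the series $\sum(|\alpha|-2)!\prod t_{0,i}^{\alpha_i}/\alpha_i!$ over multi-indices of weight $k_0+1$. I would pin it down by expanding the $z^{k_0}$-part via Lagrange inversion: the flat coordinates at $\infty$ come from $\lambda^{1/k_0}$, and the coefficients $(|\alpha|-2)!/\prod\alpha_i!$ are the typical output of that inversion. Alternatively, I would compare with the known $A_{k_0-1}$ open potential of \cite{BB21} with $N=0$, whose constant part matches.

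\textbf{Step 4.} Integrate. The second derivatives determine $F^o$ up to affine terms in the variables. These are fixed by quasihomogeneity together with the normalisations $\p^2F^o/\p t_0\p t_n=0$ and $\p^2F^o/\p t_{0,1}\p t_{N+1,0}=1$.

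The main obstacle is Step 2: controlling the collision of the critical points near $t_{N+1,0}$ as the pole degenerates, and showing the limit is finite. This finiteness is also the content of part (i). I would handle it by rewriting the critical-point residues as residues of a single meromorphic form whose poles are fixed, so that the limit can be taken inside the residue.
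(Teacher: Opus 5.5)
Your preliminary facts are correct. Since $t_{N+1,1}=v_{N+1,1}$ and the flat coordinates are sector-wise, $\p_{t_{N+1,1}}\lambda=1/(z-t_{N+1,0})$, and $\widetilde\lambda$ is the superpotential of $\H_{0;K}$. Your assertion about the $t_{N+1,0}$-derivatives is also right, and easy to verify: only the residue at $z=c:=t_{N+1,0}$ survives, giving $\p_\alpha\widetilde\lambda|_{z=c}$ and $\widetilde\lambda'|_{z=c}$. Hence $A:=F^o-\int\widetilde\lambda\,dz|_{z=c}$ is independent of $c$ modulo affine terms, which is also the paper's first reduction.

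\textbf{The gap.} The content of the theorem is the identification of $A$, and your Steps 2--3 do not establish it. Your account of the limit is wrong:
\begin{itemize}
\item \emph{There is no finiteness issue.} For closed $\alpha,\beta$ the form $\omega=\p_\alpha\lambda\,\p_\beta\lambda\,dz/((z-c)\lambda')$ is holomorphic at $z=c$ off the locus, because $1/\lambda'$ vanishes there to order $k_{N+1}+1$. So the limit is simply $-\sum_{p\in\{\infty,t_{1,0},\dots,t_{N,0}\}}\res_p\widetilde\omega$.
\item \emph{The value at $c$ cancels.} The cluster does tend to $\res_c\widetilde\omega=\p_\alpha\widetilde\lambda\,\p_\beta\widetilde\lambda/\widetilde\lambda'|_{z=c}$. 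But the remaining critical points of $\widetilde\lambda$ contribute $-\res_c\widetilde\omega$ minus the residues at $\infty$ \emph{and at the finite poles $t_{i,0}$}. So the value at $c$ drops out, and the finite poles do not.
\item \emph{Example.} For $\alpha=\beta=(i,0)$ nothing comes from $\infty$. As a function of $c$, $(\p_{t_{i,0}}\widetilde\lambda)^2/\widetilde\lambda'|_{z=c}$ has poles at the critical points of $\widetilde\lambda$, whereas the true value is $-\sum_a a v_{i,a}(c-t_{i,0})^{-a-1}$.
\end{itemize}
So the claim that $A$ depends only on $t_{0,\bullet}$ ``because near $\infty$ only the polynomial part matters'' is unsupported. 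You still have to show two things: the finite-pole residues give exactly $\p_\alpha\p_\beta$ of the sector-$i$ part of $\int\widetilde\lambda\,dz$ with no $c$-independent remainder, and the mixed-sector second derivatives of $A$ vanish. In the paper this is done by Proposition~\ref{prop: more products} (the $\{w_{N+1}\}$-components of cross-sector products vanish) together with the open WDVV / Laurent-expansion argument at $t_{i,0}$.

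\textbf{How to close it in your framework: let $c\to\infty$.} Put $g=\p_\alpha\widetilde\lambda\,\p_\beta\widetilde\lambda/\widetilde\lambda'$.
\begin{itemize}
\item The residue of $g\,dz/(z-c)$ at $t_{i,0}$ is a finite combination of $(t_{i,0}-c)^{-j}$, $j\ge1$, with $c$-independent coefficients. It therefore tends to $0$, as does $\p_\alpha\p_\beta$ of the sector-$i$ part of $\int\widetilde\lambda\,dz$.
\item $-\res_\infty g\,dz/(z-c)$ equals the polynomial part of the expansion of $g$ at $\infty$, evaluated at $c$. This vanishes unless both indices lie in sector $0$. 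In that case it is insensitive to the pole terms of $\widetilde\lambda$, which change $g$ only at order $z^{-4}$.
\end{itemize}
Since $\p_\alpha\p_\beta A$ is $c$-independent, you get $\p_\alpha\p_\beta A=0$ unless both indices are in sector $0$. Otherwise it equals the $z^0$-coefficient of $g$ at $\infty$, which is the same as for $\H_{0;\{k_0\}}$. Comparing with $F^o_{A_{k_0-1}}$ then finishes the proof as in the paper.

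\textbf{Two smaller points.}
\begin{itemize}
\item Quasihomogeneity only restricts which monomials can occur; it does not produce the coefficients $(|\alpha|-2)!$.
\item The normalisations in Step 4 are conditions on second derivatives, so they cannot fix affine terms. The identity only makes sense modulo affine terms and the choice of branch of the logarithms, since $\F^L$ is itself defined only modulo quadratic terms.
\end{itemize}
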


Related to our theorems above is the investigation of \cite{A26} and Theorem~1.4 therein. However one notes immediately two important differences. Our open WDVV solution is written down explicitly and unambiguously, while the solution of loc.cit. has the summand that needs to be found from the open WDVV. The second difference is that our construction works for all genus zero Hurwitz--Frobenius manifolds whereas the construction of \cite{A26} rely on a technical condition that yet has to be resolved (see equation (5) of Theorem 1.2 in loc.cit.).

\subsection*{Open extensions of A and D type Dubrovin-Frobenius manifolds}
It's well known that Dubrovin-Frobenius manifolds of ADE type are very special - these are the only irreducible semisimple Dubrovin-Frobenius manifolds with the polynomial potentials (as was conjectured by B. Dubrovin and proved by C. Hertling in \cite{H}). 

A and D type Dubrovin--Frobenius manifolds can be considered in the setting of Hurwitz--Frobenius manifolds. In particular, $\H_{0;\{n+1\}}$ is isomorphic to $A_n$ Dubrovin--Frobenius manifold by the construction and $D_n$ Dubrovin--Frobenius manifold can be constructed in the following way.
Consider the space $\H^{even}_{0;\{2n,2\}}$ of meromorphic functions $\lambda: \PP^1 \to \PP^1$ of the form 
\[
    \lambda(z) := \frac{z^{2n}}{n} + \sum_{\alpha=1}^{n-1}v_{\alpha}z^{2(\alpha-1)} - \frac{v_n^2}{2z^2}
\]
with arbitrary complex parameters $v_1,\dots,v_{n-1}$ and $v_n \in \CC^\ast$. Endow this space with the multiplication $\circ$ and pairing $\eta$ by the same residue calculus as in the definition of a Hurwitz--Frobenius manifold. This a folklore knowledge that one gets $D_n$ Dubrovin--Frobenius manifold in this way (we prove it in Section~\ref{section: D as HFM} for completeness). 

Our last theorem is the following.

\begin{theorem}\label{theorem: A and D}
    For any $m \ge 2$, applied to the spaces $\H_{0;\lbrace n+1,m \rbrace }$ and $\H_{0; \{2n,2,m\}}^{even}$ the open WDVV solution of Theorem~\ref{theorem: main} coincides with the open extensions of $A_n$ and $D_n$ Dubrovin-Frobenius manifolds constructed in \cite{BB21}.
\end{theorem}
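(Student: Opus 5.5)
Theorem~\ref{theorem: A and D} reduces to Theorem~\ref{theorem: expl form}, made explicit in the two cases and compared term by term with the formulas of \cite{BB21}. Both $F^o_{A_n}$ and $F^o_{D_n}$ are known in closed form there, or at least characterized uniquely. A quasihomogeneous solution of open WDVV with the normalizations $\frac{\p^2F^o}{\p t_0\p t_n}=0$ and $\frac{\p^2F^o}{\p t_{0,1}\p t_{N+1,0}}=1$ is determined up to a few constants, and \cite{BB21} shows this. So I will prove the theorem in two steps. First, identify the closed sectors: $\F^{\{n+1\}}=F^c_{A_n}$ tautologically, and $\F^{\{2n,2\}}|_{even}=F^c_{D_n}$ by the result of Section~\ref{section: D as HFM}. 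Second, show that $F^o$ from Theorem~\ref{theorem: expl form} matches $F^o_{A_n}$, $F^o_{D_n}$ under the identification of the extra variable $t_{N+1,0}$ with $t_0$ of \cite{BB21}, possibly after rescaling. Here I use that the identification of flat coordinates $t_{0,a}$ with the $A_n$/$D_n$ flat coordinates already fixes everything else.

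For $A_n$ we have $N=0$ and $\widetilde\lambda(z)=z^{n+1}/(n+1)+\sum_a v_{0,a}z^{a-1}$, which is the $A_n$ superpotential. Then $\int\widetilde\lambda\,dz$ evaluated at $z=t_0$ is a polynomial in $t_0$ and the $v_{0,a}$. I would rewrite the $v_{0,a}$ in flat coordinates using the standard residue formula $t_{0,a}\propto\res\,\lambda^{(n+1-a)/(n+1)}$. This inverts as $v$ being a sum over partitions with multinomial-type coefficients, and it produces exactly the correction term $\sum(|\alpha|-2)!\prod t^{\alpha_i}/\alpha_i!$ visible in Theorem~\ref{theorem: expl form}. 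I would then compare with the explicit formula of \cite{BB21}, which is essentially $\int_0^{t_0}$ of the Lax function plus a $t_0$-free term. Since our $t_0$-free term is given, it remains to check that it equals theirs. I would do this by showing both satisfy the same $t_0$-independent part of open WDVV with $\alpha=0$, or simply by comparing generating functions.

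For $D_n$, in the even setting $\widetilde\lambda=z^{2n}/n+\sum v_\alpha z^{2\alpha-2}$ after killing the $v_n^2/z^2$ term, but the $m$-point ramification adds its own structure. The restriction of $\lambda$ to $t_{2,a}=0$ reduces it to the $\{2n,2\}$ function in which $v_n$ survives. So $\int\widetilde\lambda\,dz = \dots + v_n^2/(2z)$, which gives the Laurent term in $t_0$. This matches the known fact that $F^o_{D_n}$ is Laurent in $t_0$, which is strong evidence that the identification is correct. I would again express the $v_\alpha$ in $D_n$ flat coordinates and compare with the formula in \cite{BB21}.

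I expect the main obstacle to be the normalization matching: the precise rescalings of $t_0$ and of $t_n$ (for $D_n$, $v_n$ versus $t_n$, and the evenness restriction) so that the coefficients agree. The fallback is the uniqueness argument. Both functions solve open WDVV for the same $F^c$, both are quasihomogeneous of the same degree, and both satisfy the normalizations, so they coincide. For this route I would quote the uniqueness statement of \cite{BB21,BB19}, namely that the solution is determined by $\frac{\p F^o}{\p t_0}$ restricted to $t_{\ge1}=0$, and check only the low-degree terms in $t_0$.
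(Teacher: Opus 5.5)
\textbf{Verdict.} Your outline is workable. For $D_n$ it follows the paper's route but skips the one step that needs an argument; for $A_n$ your direct comparison is a valid alternative to the paper's argument.

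\textbf{The $D_n$ case.} By Proposition~\ref{prop: isomorphism to D_n} the closed part is $F_{D_n}$. Only the poles at $z=t_{2,0}$ are removed in $\widetilde\lambda$; the term $-v_n^2/(2z^2)$ survives and integrates to $v_n^2/(2z)$. So $\bigl(\int\widetilde\lambda\,dz\bigr)|_{z=t_{2,0}}$ is $F^o_{D_n}$ after rescaling, essentially by the definition \eqref{def: D_n open potential} in unfolding coordinates.

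What you skip is the $t_{2,0}$-independent part. Theorem~\ref{theorem: expl form} gives $F^o$ as this integral \emph{plus} the summand $A=\sum (|\alpha|-2)!\prod_i t_{0,i}^{\alpha_i}/\alpha_i!$, which does not depend on $t_{2,0}$. Every monomial of $F^o_{D_n}$ involves $s_{n+1}$, i.e.\ $t_0$, so you must show $A|_{t_{0,2\alpha}=0}=0$. The paper does this by parity:
\begin{itemize}
\item $k_0$ is even, and on the even locus only odd $i$ survive;
\item hence $\sum_i (k_0+1-i)\alpha_i$ is even, while $k_0+1$ is odd;
\item so no monomial of $A$ survives.
\end{itemize}
Saying you would ``compare again'' as in the $A_n$ case does not cover this. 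There the $t_0$-free parts are nonzero and must match; here it must vanish.

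Your fallback does not rescue the $D_n$ case either. The uniqueness result the paper uses (for $A_n$) is for \emph{polynomial} solutions satisfying Proposition~\ref{prop: quasihomogenity}, and $F^o_{D_n}$ is Laurent in $t_0$. For $A_n$ the fallback is exactly the paper's proof. But the hypothesis to verify there is polynomiality of $F^o$, which is immediate from Theorem~\ref{theorem: expl form} since $N=0$, not low-order terms in $t_0$.

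\textbf{The $A_n$ case.} Your direct comparison differs from the paper's route and is valid, and it is simpler than you make it. No inversion of flat coordinates is needed. That inversion also does not ``produce'' the $(|\alpha|-2)!$ term, which is a separate summand of Theorem~\ref{theorem: expl form}. The argument is:
\begin{itemize}
\item By Remark~\ref{remark: 0-sector coordinates}, $s_k(t)=v_{0,k}(t)$.
\item Then \eqref{eq: open potential derivative} reads $\p_{t_0}F^o_{A_n}=\widetilde\lambda|_{z=t_0}$.
\item Hence $F^o_{A_n}-\int_0^{t_0}\widetilde\lambda\,dz$ is $t_0$-free. It equals the $b=0$ part of $F^o_{A_n}$, which is literally $A$ with $k_0=n+1$.
\end{itemize}
This gives an explicit identity without appealing to the classification in \cite{BB21}. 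The paper's route avoids any computation, needing only polynomiality and Proposition~\ref{prop: quasihomogenity}, but it depends on that uniqueness theorem.
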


\subsection{Acknowledgements}
The research leading to these results has received funding from the Basic Research Program at the National Research University Higher School of Economics.

The first named author is grateful to Sergei Lando and Vadim Prokofiev for the fruitful discussions.

\section{Hurwitz--Frobenius manifold}\label{section: HFM}

In this paper we will only deal with the Dubrovin--Frobenius manifolds constructed in a specific way - on the space of ramified coverings of $\PP^1$. These are called {\it Hurwitz--Frobenius manifold} after B. Dubrovin.

Consider the space of the meromorphic functions $\lambda(z)$ on $\CC$ of the form
\begin{equation*}
 \lambda(z) := \frac{z^{k_0}}{k_0} + \sum_{a=1}^{k_0-1} v_{0,a} z^{a-1} + \sum_{i=1}^N\sum _{a=1}^{k_i} \frac{v_{i,a}}{ \left(z-v_{i,0}\right)^{a}}
\end{equation*}
where $v_\bullet$ are complex parameters. Assume also that $k_0 \ge 2$.

Let $\mu := k_0-1 + N + \sum_{i=1}^N k_i$ be the number of parameters $v_\bullet$ above.
Denote
\[
    M := \lbrace v_{\bullet} \in \CC^\mu \ | v_{i,k_i} \neq 0 \ \text{ and } \ v_{i,0} = v_{j,0}\ \text{for} \ i \neq j \rbrace.
\]

Following B.~Dubrovin (see Lecture~5 of \cite{D2}) associate to $M$ the three-point function.
Denote by
\[
 \mathcal{R} := \lbrace \infty, v_{1,0},\dots, v_{N,0} \rbrace
\]
the set of ramification points of $\lambda$.
For $X \in \T_M$ let $X \cdot \lambda$ stand for the respective directional derivative. Set
\begin{equation}\label{eq: three-point function}
   \langle X,Y,Z \rangle := - \sum_{p \in \mathcal{R}} \res_{z=p} \Big( (X \cdot \lambda)(Y \cdot \lambda)(Z \cdot \lambda) \ \frac{dz }{\frac{\p \lambda}{\p z}} \ \Big), \quad \forall X,Y,Z \in \T_M,
\end{equation}

Define also the bilinear form $\eta: \T_M \otimes_M \T_M \to \O_M$ by
\[
    \eta(X,Y) := - \sum_{p \in \mathcal{R}} \res_{z=p}  \Big( (X \cdot \lambda)(Y \cdot \lambda) \ \frac{dz }{\frac{\p \lambda}{\p z}} \ \Big), \quad \forall X,Y,Z \in \T_M.
\]
This bilinear form is symmetric by the definition but also non--degenerate, defining the pairing on $\T_M$.

\begin{remark}
    At this point we make the choice of the so--called ``primary differential'' of B.~Dubrovin. In our case this is just $dz$.
\end{remark}

The three--point function and the pairing above allow one to introduce the product ${\circ: \T_M \otimes_M \T_M \to \T_M}$ by the equality
\[
    \eta(X \circ Y, Z) = \langle X,Y, Z \rangle, \quad \forall Z \in \T_M.
\]
This product introduces on $M$ a Dubrovin--Frobenius manifold structure (see Theorem~\ref{theorem: HFM} below). This follows immediately, that $e := \dfrac{\p}{\p v_{0,1}}$ is the unit of $\circ$ and $\eta(X,Y) = \langle e , X, Y \rangle$.

The following proposition will be our main computational tool for the product $\circ$. It can also be found in a bit different form in \cite{D2} Eq.(5.62).
\begin{proposition}\label{prop: product via lambda}
  Let $X,Y,Z \in \T_M$ be such that
   \[
      (X \cdot \lambda)(Y \cdot \lambda) - (Z \cdot \lambda) = \frac{\p \lambda}{\p z} \phi(z)
   \]
    assumed as the equality of rational functions, where $\phi(z)$ is a rational function such that $\sum_{p \in \mathcal{R}} \res_{z=p} (\phi(z)dz) = 0$.

    Then $X \circ Y = Z$.
\end{proposition}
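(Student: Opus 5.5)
The plan is to compare $\eta(X\circ Y,W)=\langle X,Y,W\rangle$ with $\eta(Z,W)$ for an arbitrary test field $W\in\T_M$, and to show that they agree. Non-degeneracy of $\eta$ then gives $X\circ Y=Z$. By linearity of the residue, for every $W$ we have
\[
\langle X,Y,W\rangle-\eta(Z,W)
=-\sum_{p\in\mathcal R}\res_{z=p}\Big(\big((X\cdot\lambda)(Y\cdot\lambda)-(Z\cdot\lambda)\big)(W\cdot\lambda)\frac{dz}{\frac{\p\lambda}{\p z}}\Big)
=-\sum_{p\in\mathcal R}\res_{z=p}\big(\phi(z)\,(W\cdot\lambda)\,dz\big).
\]
So it suffices to show that $\sum_{p\in\mathcal R}\res_{z=p}(\phi\,(W\cdot\lambda)\,dz)=0$ for all $W$.

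For $W=e=\p/\p v_{0,1}$ we have $W\cdot\lambda=1$, so this is exactly the hypothesis on $\phi$; this already gives $\eta(X\circ Y,e)=\eta(Z,e)$.

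For general $W$ I would move the residues off $\mathcal R$ using the residue theorem on $\PP^1$. Since each $W\cdot\lambda$ has poles only in $\mathcal R$, the form $\phi\,(W\cdot\lambda)\,dz$ has poles only in $\mathcal R$ and at the zeros $q_1,\dots,q_m$ of $\p\lambda/\p z$ in $\CC\setminus\mathcal R$. Hence
\[
-\sum_{p\in\mathcal R}\res_{z=p}\big(\phi\,(W\cdot\lambda)\,dz\big)=\sum_j\res_{z=q_j}\big(\phi\,(W\cdot\lambda)\,dz\big).
\]
I work first at a generic point of $M$, where all $q_j$ are simple zeros of $\lambda'$, and extend to all of $M$ by continuity of both sides in $v_\bullet$ at the end. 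At such a point, at each $q_j$ the numerator $(X\cdot\lambda)(Y\cdot\lambda)-Z\cdot\lambda$ equals $\lambda'\phi$. So $\phi$ has at most a simple pole at $q_j$, and $\res_{q_j}(\phi\,(W\cdot\lambda)\,dz)=(W\cdot\lambda)(q_j)\,\res_{q_j}\phi\,dz$.

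The whole statement therefore reduces to showing that $\phi$ is in fact regular at every $q_j$. Equivalently, $(X\cdot\lambda)(Y\cdot\lambda)-Z\cdot\lambda$ must vanish at every critical point, which is the familiar canonical-coordinate description of $\circ$: $\langle X,Y,W\rangle=\sum_j \frac{(X\cdot\lambda)(Y\cdot\lambda)(W\cdot\lambda)}{\lambda''}(q_j)$.

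This regularity is the step I expect to be the main obstacle, because the hypothesis only controls $\sum_j\res_{q_j}\phi\,dz$, not each residue separately. I would try to get it from the rationality of $\phi$ together with the residue condition, as follows. The functions $W\cdot\lambda$, as $W$ ranges over $\T_M$, span all rational functions with the allowed pole orders at $\mathcal R$. Their values at the $q_j$ realise arbitrary vectors in $\CC^m$ at semisimple points; this is the statement that the canonical idempotents exist. I would combine this with the fact that $\phi$ is a genuine rational function, so that $(X\cdot\lambda)(Y\cdot\lambda)-Z\cdot\lambda$ is divisible by $\lambda'$ in the ring of rational functions. That divisibility forces vanishing at each simple zero $q_j$, hence regularity of $\phi$ there.

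In that reading, the residue hypothesis serves to fix the constant ambiguity at $\infty$, which the $W=e$ computation above confirms. Once regularity holds, every $\res_{q_j}$ vanishes, so $\langle X,Y,W\rangle=\eta(Z,W)$ for all $W$, and non-degeneracy of $\eta$ gives $X\circ Y=Z$.
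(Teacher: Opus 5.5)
Your reduction is correct: for every $W$ one has $\langle X,Y,W\rangle-\eta(Z,W)=-\sum_{p\in\mathcal R}\res_{z=p}\big(\phi\,(W\cdot\lambda)\,dz\big)$. After moving the residues to the critical points $q_j$, you rightly identify that what is needed is regularity of $\phi$ at every $q_j$.

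The step where you try to obtain this regularity fails. The ring of rational functions in $z$ is a field, so ``$(X\cdot\lambda)(Y\cdot\lambda)-Z\cdot\lambda$ is divisible by $\lambda'$ in the ring of rational functions'' holds for \emph{every} triple $X,Y,Z$: just set $\phi:=\big((X\cdot\lambda)(Y\cdot\lambda)-Z\cdot\lambda\big)/\lambda'$. It says nothing about vanishing at the $q_j$. The residue hypothesis cannot supply it either, since, as you note yourself, it only controls $\sum_j\res_{q_j}\phi\,dz$. In fact the statement read literally is false. Take $k_0\ge 3$, $X=Y=e$, and $Z=(1-c)e$ with $c\neq 0$. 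Then $(X\cdot\lambda)(Y\cdot\lambda)-Z\cdot\lambda=c=\lambda'\cdot\frac{c}{\lambda'}$. Here $\phi=c/\lambda'$ is rational and has zero residue at every point of $\mathcal R$: it vanishes to order $k_i+1$ at $v_{i,0}$ and is $O(z^{-2})$ at $\infty$. Yet $e\circ e=e\neq Z$. Your own idempotent observation shows why: since the values $(W\cdot\lambda)(q_j)$ fill out $\CC^m$, the residues $\res_{q_j}\phi\,dz$ must vanish \emph{individually}, and the hypothesis does not give that.

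What is actually needed is that $\phi$ has poles only in $\mathcal R$, i.e.\ $\phi$ is a polynomial in $z$ and the $(z-v_{i,0})^{-1}$. This holds in every application in the paper ($\phi=0$, $\phi=1$, and $\phi=1/z$ in the $D_n$ case). Under that hypothesis your first display finishes the proof at once. The form $\phi\,(W\cdot\lambda)\,dz$ then has poles only in $\mathcal R$, so the sum of its residues over $\mathcal R$ vanishes by the residue theorem on $\PP^1$ for every $W$, and non-degeneracy of $\eta$ gives $X\circ Y=Z$. This is presumably what the paper's one-line appeal to non-degeneracy has in mind. Note also that under this hypothesis the stated condition $\sum_{p\in\mathcal R}\res_{z=p}\phi\,dz=0$ is automatic (it is the case $W=e$). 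So it does not ``fix a constant at $\infty$'', and no genericity or continuity argument is required.
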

\begin{proof}
    This follows immediately from the non-degeneracy of the pairing $\eta$.
\end{proof}

\begin{proposition}\label{prop: SpecAn}
    The $\O_M$--algebra $(\T_M,\circ)$ of $\H_{0; K}$ is isomorphic to
    \[
         \A_K := \CC[z, w_1,\dots,w_N, y_1,\dots,y_N] \otimes \O_M / \I
    \]
    where $\I$ is the ideal generated by
    \begin{align}
        & P_{1,i} := y_i z - 1 - v_{i,0} y_i, \ P_{2,ij} := y_iy_j - \frac{y_i - y_j}{v_{i,0} - v_{j,0}}, \ P_{3,i} := w_i - \sum_{a=1}^{k_i} a v_{i,a} y_i^{a+1},
        \label{eq: specan-1}
        \\
        & P_{4,i} := w_i z - w_i v_{i,0} - \sum_{a=1}^{k_i} av_{i,a} y_i^{a},
        \
        P_{5} := z^{k_0-1} - \sum_{a=1}^{k_0-1} (a-1) v_{0,a} z^{a-2} - \sum_{i=1}^N w_i.
        \label{eq: specan-2}
    \end{align}
  The isomorphism is given by
    \[
        \Psi: y_i \mapsto \frac{\p }{\p v_{i,1}}, \ w_i \mapsto \frac{\p }{\p v_{i,0}}, \ z \mapsto \frac{\p }{\p v_{0,2}}.
    \]

\end{proposition}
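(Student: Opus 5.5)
The plan is to construct $\Psi$ explicitly and check three things: the defining relations of $\I$ are sent to zero, $\Psi$ is surjective onto $\T_M$, and the ranks agree. The tool is Proposition~\ref{prop: product via lambda}, applied through the directional derivatives of $\lambda$. They are
\[
\frac{\p\lambda}{\p v_{i,1}} = \frac{1}{z-v_{i,0}},\qquad
\frac{\p\lambda}{\p v_{i,0}} = \sum_{a=1}^{k_i}\frac{a v_{i,a}}{(z-v_{i,0})^{a+1}},\qquad
\frac{\p\lambda}{\p v_{0,a}} = z^{a-1}.
\]
We also have $e=\p/\p v_{0,1}$ with $e\cdot\lambda=1$. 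Write $Y_i := 1/(z-v_{i,0})$ for the image of $y_i$, $W_i:=\p\lambda/\p v_{i,0}$ for the image of $w_i$, and $Z := z$ for the image of $z$.

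Each generator of $\I$ becomes, after substitution, an identity among rational functions. For most generators the remainder $\phi$ in Proposition~\ref{prop: product via lambda} will simply be $0$:

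\begin{enumerate}
\item[$P_{1,i}$:] $Y_i\cdot z - 1 - v_{i,0}Y_i = 0$ holds identically.
\item[$P_{2,ij}$:] this is the partial fractions identity $Y_iY_j = (Y_i-Y_j)/(v_{i,0}-v_{j,0})$.
\item[$P_{3,i}$:] this is the definition of $W_i$, since $W_i=\sum_a a v_{i,a} Y_i^{a+1}$.
\item[$P_{4,i}$:] $W_i(z-v_{i,0}) = \sum_a a v_{i,a} Y_i^{a}$ holds identically.
\end{enumerate}

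Two points need care. First, a product such as $y_i^{a+1}$ must be read via $\circ$ as an element of $\T_M$. So I will argue inductively that the $\circ$-powers of $\p/\p v_{i,1}$ correspond exactly to $Y_i^{m}$. Each $Y_i^m$ for $m\le k_i+1$ lies in the span of $\{\p\lambda/\p v\}$ modulo $\p_z\lambda\cdot\phi$, and this is what makes the powers representable. Second, the relation $P_5$ is the essential one, and it is where $\phi\neq0$. Here
\[
\frac{\p\lambda}{\p z} = z^{k_0-1}+\sum_a (a-1)v_{0,a}z^{a-2}-\sum_i\sum_a \frac{a v_{i,a}}{(z-v_{i,0})^{a+1}},
\]
so $\p_z\lambda = Z^{k_0-1}+\sum_a (a-1)v_{0,a}Z^{a-2}-\sum_i W_i$. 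The paper's sign in $P_5$ must be matched against this formula. Then $\p_z\lambda\cdot 1$, with $\phi=1$, shows that this combination vanishes in $\T_M$. The residue sum of $dz$ over $\mathcal R$ is $0$, because $dz$ has no poles at finite points and has no residue at $\infty$ once we use $\res_\infty dz=0$. Higher powers $Z^{m}$ with $m\ge k_0$ likewise reduce using $\phi$ polynomial in $z$, whose residues vanish.

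To finish I will show that $\Psi$ is surjective and that $\A_K$ is free of rank $\mu$ over $\O_M$. For the rank, use the relations to reduce monomials: $P_1$ kills $y_iz$, $P_2$ kills $y_iy_j$, $P_3$ expresses $w_i$, and $P_4$ with $P_3$ give $y_i^{k_i+2}$ in terms of lower powers. A spanning set is then $1,z,\dots,z^{k_0-2}$ together with $y_i,\dots,y_i^{k_i+1}$, for $i=1,\dots,N$, which has $\mu$ elements. Surjectivity holds because these elements map to $\p/\p v_{0,a}$ and to triangular combinations of $\p/\p v_{i,a}$ (note $y_i^{k_i+1}$ involves $\p/\p v_{i,0}$ with coefficient $k_iv_{i,k_i}\neq0$). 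A surjection from a module generated by $\mu$ elements onto a free module of rank $\mu$ is an isomorphism. The main obstacle is this triangularity bookkeeping. It requires verifying that the $\circ$-powers $y_i^m$ are identified with $Y_i^m$ modulo terms in $\p_z\lambda\cdot\phi$ with residue-free $\phi$, and this is where nondegeneracy and the condition $v_{i,k_i}\neq 0$ enter.
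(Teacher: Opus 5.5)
Your overall route is the paper's: check each generator of $\I$ with Proposition~\ref{prop: product via lambda}, show $\A_K$ is spanned by $\mu$ elements, then compare with $\T_M$. Your finish, a surjection from a $\mu$-generated module onto a free module of rank $\mu$, is cleaner than the paper's bare claim that the spanning set is a basis. Your basis with $y_i^{k_i+1}$ in place of $w_i$ is equivalent because $v_{i,k_i}\neq 0$. You are also right to flag the sign in $P_5$. The paper's own proof writes $\p_z\lambda$ with $+\sum_a (a-1)v_{0,a}z^{a-2}$, so the displayed $P_5$ has a typo. One minor slip: the coefficient of $\p/\p v_{i,0}$ in $\Psi(y_i^{k_i+1})$ is $1/(k_i v_{i,k_i})$, not $k_i v_{i,k_i}$.

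\textbf{The gap is in the spanning-set step.} Your rank argument depends entirely on this step. You claim that $P_4$ together with $P_3$ expresses $y_i^{k_i+2}$ through lower powers. This cannot work, because $P_{4,i}$ already lies in the ideal generated by $P_{1,i}$ and $P_{3,i}$:
\[
P_{4,i} = (z-v_{i,0})\,P_{3,i} + \Big(\sum_{a=1}^{k_i} a v_{i,a} y_i^{a}\Big) P_{1,i}.
\]
Indeed, modulo $P_{1,i},P_{3,i},P_{4,i}$ the ring $\CC[z,y_i,w_i]\otimes\O_M$ becomes $\O_M[z,(z-v_{i,0})^{-1}]$. There all powers of $y_i=(z-v_{i,0})^{-1}$ are independent. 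The same holds after adding the $P_{2,ij}$, which are just partial fractions. So the sector-local relations never bound the $y_i$-degree, and without a bound you do not get a $\mu$-element generating set.

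\textbf{The fix.} The bound must come from the global relation $P_5$, which you used only to reduce $z^{k_0-1}$. This is what the paper does in Lemma~\ref{lemma_basis}:
\begin{enumerate}
\item Multiply $P_5$ by $y_i$.
\item Reduce each $z^m y_i$ with $P_{1,i}$ to powers $z^{\le k_0-2}$ plus a multiple of $y_i$.
\item Reduce each $w_j y_i$ with $j\neq i$ via $P_{3,j}$ and $P_{2,ij}$ to multiples of $y_i$ and powers $y_j^{b}$ with $b\le k_j+1$.
\item What remains isolates $w_i y_i=\sum_a a v_{i,a} y_i^{a+2}$. Since $v_{i,k_i}\neq 0$, this expresses $y_i^{k_i+2}$ in your spanning set.
\item Multiply by further powers of $y_i$ and repeat the reductions to handle all higher powers.
\end{enumerate}
Equivalently, show that the $\O_M$-span of your proposed set is stable under multiplication by $z$ and every $y_j$; the single case needing $P_5$ is $y_i\cdot y_i^{k_i+1}$. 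With this replacement the rest of your argument goes through.
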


\begin{proof}
    In this proof we apply Proposition~\ref{prop: product via lambda} multiple times.
    First of all note that $\frac{\p }{\p v_{0,1}}$ is the unit of $\circ$ and
    \[
        \frac{\p \lambda}{\p v_{i,1}} = \frac{1}{z - v_{i,0}},
        \
        \frac{\p \lambda}{\p v_{0,2}} = z.
    \]
    We have $\frac{\p }{\p v_{i,1}} \circ \frac{\p }{\p v_{0,2}} = \frac{\p }{\p v_{0,1}} +  v_{i,0} \frac{\p }{\p v_{i,1}}$ what gives the generator $P_{1,i}$.

    Similarly the second generator $P_{2,ij}$ originates from the simple equality
    \[
        \frac{1}{z - v_{i,0}}\frac{1}{z - v_{j,0}} = \frac{1}{v_{i,0} - v_{j,0}} \left( \frac{1}{z - v_{i,0}} - \frac{1}{z - v_{j,0}}\right).
    \]
    Next we have
    \[
     \frac{\p \lambda}{\p v_{i,0}} = \sum_{a=1}^{k_i} \frac{a v_{i,a}}{ (z-v_{i,0})^{a+1}}
    \]
    what gives the generators $P_{3,i}, P_{4,i}$.

    At the same time we have
    \[
        \frac{\p \lambda}{\p z} = z^{k_0-1} + \sum_{a=1}^{k_i-1} (a-1) v_{0,a} z^{a-2} - \sum_{i=1}^N \frac{\p \lambda}{\p v_{i,0}}.
    \]
    This gives the generator $P_5$ by Proposition~\ref{prop: product via lambda} again.

    In order to complete the proof we need to show that these generators compute any product of $\H_{0;K}$ after the map $\Psi$.

    We show that the quotient-ring above has the $\O_M$--basis
    \begin{equation}\label{eq: AK basis}
        [1],[z],\dots,[z^{k_0-2}], \ [w_i] \ [y_i],\dots,[y_i^{k_i}], \ i=1,\dots, N.
    \end{equation}
    First of all note that $P_{1,i}$ and $P_{4,i}$ allows us to express any product $[z^a y_i]$ and $[z^ax_i]$ via the basis elements above.

    \begin{lemma}\label{lemma_basis}
        For any $1 \le i,j \le N$ we can express $[w_iy_j]$ and $[w_iw_j]$ via basis elements above using the generators of $\I$.
    \end{lemma}
    \begin{proof}
        Let $i \neq j$.
        Consider
        \[
            P_{3,i} y_j = w_iy_j - \sum_{a=1}^{k_i} a v_{i,a} y_i^{a+1}y_j.
        \]
        Applying repeatedly the generator $P_{2,ij}$ we get the relation on $[w_iy_j]$ via the basis elements outlined above.

        In order to express $[w_iy_i]$ consider $P_5y_i$. The products $[z^{a-2}y_i]$ are simplified via the generator $P_{1,i}$ and the products $[w_iy_j]$ with $i \neq j$ by the reasoning above. This gives $[w_iy_i]$.

        The products $[w_iw_j]$ are now resolved with the help of the products computed above and the third generator $P_{3,i}$.
    \end{proof}

    To complete the proposition it remains to show that $[z^{a}]$ with $a \ge k_0-1$ and $[y_i^a]$ with $a \ge k_i+1$ are all expressible via the basis elements above.

    This is given by the generators $P_{3,i}$ and $P_5$.
\end{proof}

\begin{remark}
 Correct language for the algebra $\A_K$ above would be the analytic spectrum. This notion was introduced and investigated extensively by C. Hertling in \cite{H}. It is associated to any F--manifold and not even Dubrovin--Frobenius manifolds. However, we only use here the analytic spectra of the genus zero Hurwitz--Frobenius manifolds and do not want to introduce to many notations.
\end{remark}

The coordinates $v_\bullet$ look essential to parametrize the meromorphic functions with the prescribed ramification profile, however these are not flat for the pairing $\eta$ except for the simplest cases of low dimension. In order to define the potential we need to find the flat coordinates.

\subsection{Flat structure}\label{section: flat structure}

Let $z_0,z_i$ be such that $k_0 \lambda = z_0^{k_0}$, $\lambda = z_i^{k_i}$ in a neighborhood of $z = 0$ and $z = v_{i,0}$. We have
\[
  z_0 = z + \frac{v_{0,k_0-1}}{k_0} z^{-1} + \dots, \quad z_i = v_{i,k_i}^{1/k_i} (z - v_{i,0})^{-1} + \dots.
\]

Then define the coordinates $t_\bullet$ by
\begin{align}
    &t_{0,\alpha} = -\frac{1}{k_0-\alpha} \res_{z = \infty} z_0^{k_0-\alpha} dz,\
    \quad \alpha = 1,\dots,k_0-1,
    \label{eq: flat coordinates 0}
    \\
    &t_{i,0} = v_{i,0}, \  t_{i,1} = v_{i,1}, \quad
    t_{i,\alpha} = \frac{k_i}{k_i-\alpha + 1} \res_{z = v_{i,0}} z_i^{k_i-\alpha + 1} dz,\
    \quad \alpha = 2,\dots,k_i, \ i \ge 1. 
    \label{eq: flat coordinates i}
\end{align}

\begin{remark}\label{remark: flat coordinates are sector-wise}
    This follows immediately from the definition of $t_\bullet$ and $\lambda$ that $t_{i,\alpha} = t_{i,\alpha} (v_{i,0},\dots,v_{i,k_i})$. Namely, the flat coordinates are sector-wise functions of the $v_\bullet$ coordinates.

    This simple observation will play an important role later on.
\end{remark}

\begin{theorem}[Theorem 5.1 of \cite{D2}]\label{theorem: HFM}
    In the coordinates $t_\bullet$ above the three--point function and the pairing $\eta$ define the structure of a Dubrovin--Frobenius manifold on $\H_{0;K}$.

    The only non-zero pairings in the flat frame are
    \begin{align*}
        & \eta(\frac{\p}{\p t_{0,\alpha}},\frac{\p}{\p t_{0,k_0-\alpha}}) = 1, && 1 \le \alpha \le k_0-1,
        \\
        &\eta(\frac{\p}{\p t_{i,0}},\frac{\p}{\p t_{i,1}}) = 1, \
        \eta(\frac{\p}{\p t_{i,\alpha}},\frac{\p}{\p t_{i,k_i+1-\alpha}}) = \frac{1}{k_i}, && 2 \le \alpha \le k_i, \ i \ge 1.
    \end{align*}

    The three--point functions (cf. Eq.~\eqref{eq: three-point function}) are integrated by the potential $\F^K$
    \[
        \frac{\p^3 \F^K}{\p t_{i,\alpha} \p t_{j,\beta} \p t_{r,\gamma}} = \left\langle \frac{\p}{\p t_{i,\alpha}},\frac{\p}{\p t_{j,\beta}},\frac{\p}{\p t_{r,\gamma}} \right \rangle.
    \]

     The potential $\F^K$ is subject to the quasihomogeneity condition $E \cdot \F^K = 2(1 + \frac{1}{k_0}) \F^K$ modulo the quadratic terms. The Euler vector field $E$ reads
     \[
         E = \sum_{\alpha=1}^{k_0-1}
         \left( \frac{k_0 + 1}{k_0} - \frac{\alpha}{k_0} \right)
         t_{0,\alpha} \frac{\p}{\p t_{0,\alpha}} +  \sum_{i=1}^N \left( \frac{1}{k_0}t_{i,0} \frac{\d}{\d t_{i,0}}+\sum_{\alpha=1}^{k_i}
         \left(  \frac{k_0 + 1}{k_0} - \frac{\alpha-1}{k_i} \right) t_{i,\alpha} \frac{\p }{t_{i,\alpha}}  \right) 
     \]

\end{theorem}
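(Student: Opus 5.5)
The statement is Dubrovin's theorem, Theorem 5.1 of \cite{D2}, so the plan follows his residue argument, specialised to genus zero. I would organise it in three steps, in this order: \emph{flatness of $t_\bullet$ together with the form of $\eta$}, then \emph{existence of the potential}, then \emph{quasihomogeneity}.

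\textbf{Step 1: flatness and the pairing.} First I rewrite $\eta$ using the local parameters $z_0$ and $z_i$. Near each $p\in\mathcal R$, pass to the variable $z_p$ (that is, $z_0$ at $\infty$ and $z_i$ at $v_{i,0}$). There $\lambda = z_p^{k_p}$ (up to the factor $k_0$), so in the thermodynamic identification one has
\[
\partial_X\lambda\,dz=-\,\partial_X(z\,d\lambda),
\]
with $\lambda$ held fixed. It follows that $\eta$ is the pullback of the trivial pairing under the map $X\mapsto (\partial_X z)|_{\lambda\ \mathrm{fixed}}$. Next I expand $z$ as a Laurent series in $z_p$. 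Its coefficients are, up to normalisation, exactly the residues defining $t_{0,\alpha}$ and $t_{i,\alpha}$ in \eqref{eq: flat coordinates 0}, \eqref{eq: flat coordinates i}, together with $v_{i,0}$ and $v_{i,1}$.

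Then I compute $\eta(\partial_{t_{i,\alpha}},\partial_{t_{j,\beta}})$ as a sum of residues of products of the expansions $\partial_{t}z\, \partial_{t'} z\, dz_p^{k_p}$. Reading off the pairing constants requires the triangularity of the expansion coefficients with respect to the $t$'s. This gives the pairing constants listed in the statement, including $1/k_i$ and the $(t_{i,0},t_{i,1})$ pairing. The last pairing comes from $\partial_{v_{i,1}}\lambda=1/(z-v_{i,0})$ and $\partial_{v_{i,0}}$.

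\textbf{Step 2: existence of the potential.} I write
\[
c(X,Y,Z)=\langle X,Y,Z\rangle .
\]
It is symmetric by construction. I show that in flat coordinates $\partial_W c(X,Y,Z)$ is symmetric in all four arguments. For this I differentiate the residue formula. The term $\partial_W\partial_X\lambda$ combines with the others using flatness from Step 1, which makes $\partial_X\lambda$ at fixed $z_p$ polynomial in $z_p$. The resulting expression is a sum of residues that is manifestly symmetric.

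Symmetry of the fourth derivatives gives a local potential $\F^K$ with third derivatives equal to $c$. Associativity of $\circ$ follows from Proposition~\ref{prop: SpecAn}, because $(\T_M,\circ)$ is a quotient of a commutative polynomial ring. The unit is $e=\partial_{v_{0,1}}=\partial_{t_{0,k_0-1}}$ up to normalisation, and it is flat.

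\textbf{Step 3: quasihomogeneity.} I use the scaling symmetry $z\mapsto cz$, $\lambda\mapsto c^{k_0}\lambda$. Under it $v_{i,0}$ has degree $1$, and the $v_{i,a}$ get degrees determined by this symmetry. Translating to $t_\bullet$ via the residue formulas gives the weights in $E$ after normalising $\deg\lambda=1$. Homogeneity of the residue formula \eqref{eq: three-point function} then yields the degree of $c$, hence $E\cdot\F^K=(3-d)\F^K$ modulo quadratic terms, which matches the stated $2(1+1/k_0)$.

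The main obstacle is Step 2: proving the full symmetry of $\partial_W c$. This needs careful bookkeeping of the residues at the poles $v_{i,0}$, which move with the parameters. I would handle it by working entirely in the coordinates $z_p$, where the poles are fixed at infinity.
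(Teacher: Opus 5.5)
\textbf{There is a genuine gap in Step 2.} Your Steps 1 and 3 follow Dubrovin's argument: the thermodynamic identity, the expansion of $z$ in the local parameters $z_p$, and the scaling $z\mapsto cz$. Step 2, however, carries the whole content of the claim that the three-point functions are integrated by a potential. There the symmetry is asserted rather than proved, and the mechanism you indicate does not work.

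At fixed $z_p$ the function $\lambda$ is fixed, so the meaningful version of your claim concerns $\p_Xz|_\lambda$. For a flat field $X$, near each $p\in\mathcal R$, this is a constant multiple of at most one monomial in $z_p^{-1}$, \emph{plus a tail} of order $z_p^{-k_p-1}$ (of order $z_0^{-k_0}$ at $\infty$). The tail coefficients are nonlinear in $t_\bullet$, and at $v_{i,0}$ they even involve the other sectors through the regular part of $\lambda$. These tails do not affect $\eta$, but they do affect the three-point function. By the thermodynamic identity
\[
\langle X,Y,Z\rangle=\sum_{p\in\mathcal R}\res_p\,\p_Xz\,\p_Yz\,\p_Zz\,\frac{(d\lambda)^2}{dz},
\]
and $(d\lambda)^2/dz$ behaves like a constant times $z_i^{2k_i}\,dz_i$ near $v_{i,0}$. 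Differentiating on fixed contours in the $z_p$-planes, with all derivatives of $z$ taken at fixed $\lambda$, gives
\[
\p_W\langle X,Y,Z\rangle=\sum_{p\in\mathcal R}\res_p\,\frac{(d\lambda)^2}{(dz)^2}\Big[\big(\p_W\p_Xz\,\p_Yz\,\p_Zz+\p_Xz\,\p_W\p_Yz\,\p_Zz+\p_Xz\,\p_Yz\,\p_W\p_Zz\big)\,dz-\p_Xz\,\p_Yz\,\p_Zz\;d(\p_Wz)\Big].
\]
The second-derivative terms (tail times $z_i^{2k_i}$) are not killed by flatness. The last term is not symmetric under $X\leftrightarrow W$, and its antisymmetric part could only cancel after summing over all poles, by an identity you do not supply. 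So nothing here is manifestly symmetric.

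\textbf{How to close it: canonical coordinates.} This is how Dubrovin's treatment proceeds.
\begin{enumerate}
\item At generic points $\lambda$ has $\mu$ simple critical points $q_j$ (Riemann--Hurwitz). The critical values $u_j=\lambda(q_j)$ are local coordinates, and $(\p_{u_i}\lambda)(q_j)=\delta_{ij}$.
\item Moving the residues from $\mathcal R$ to the critical points gives $\langle X,Y,Z\rangle=\sum_j(\p_X\lambda\,\p_Y\lambda\,\p_Z\lambda/\lambda'')(q_j)$. Hence $\p_{u_i}\circ\p_{u_j}=\delta_{ij}\p_{u_i}$, $e=\sum_j\p_{u_j}$, and $\eta=\sum_j\eta_{jj}\,du_j^2$ with $\eta_{jj}=1/\lambda''(q_j)$. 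This yields commutativity, associativity and the unit directly, without going through Proposition~\ref{prop: SpecAn}.
\item For a diagonal metric and this product, a direct Christoffel-symbol computation shows that $\nabla c$ is totally symmetric if and only if $\p_{u_i}\eta_{jj}=\p_{u_j}\eta_{ii}$. The only non-automatic components are $(\nabla_{u_j}c)_{iij}$ versus $(\nabla_{u_i}c)_{ijj}$, which equal $\tfrac12\p_{u_i}\eta_{jj}$ and $\tfrac12\p_{u_j}\eta_{ii}$ respectively.
\item This Egoroff property follows from your Step 1: $\eta_{jj}=\eta(e,\p_{u_j})=\p_{u_j}t_{0,k_0-1}$, because $e=\p_{v_{0,1}}=\p_{t_{0,1}}$ is a flat coordinate field paired only with $\p_{t_{0,k_0-1}}$.
\item The Poincar\'e lemma in the flat coordinates then gives $\F^K$.
\end{enumerate}

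\textbf{Two smaller corrections.} First, the unit is $\p_{t_{0,1}}$, not $\p_{t_{0,k_0-1}}$. Second, a careful Step 1 gives $\eta(\p_{t_{i,\alpha}},\p_{t_{i,k_i+2-\alpha}})=1/k_i$ for $2\le\alpha\le k_i$; compare the term $\frac{1}{3}t_{1,2}t_{1,3}$ in Example~1 of Section~\ref{section: examples}. So the index $k_i+1-\alpha$ in the statement is a misprint, and your computation should not claim to reproduce it.
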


\begin{example}
    For $K = \lbrace 5 ,5 \rbrace$ the flat coordinates are given by
    \begin{align*}
        & v_ {0, 4} =  t_ {0, 4}, \ v_ {0, 3} =  t_ {0, 3},  \ v_ {0, 2} =  t_ {0, 4}^2 + t_ {0, 2}, \ v_ {0, 1} =  t_ {0, 1} +  t_ {0, 3} t_ {0, 4},
        \\
        & v_ {1, 1} = t_ {1, 1}  \ v_ {1, 2} = \frac{1}{5}\left (t_ {1, 2} t_ {1, 5} + t_{1,3} t_ {1, 4} \right), \ v_ {1, 3} = \frac {1} {25} t_ {1, 5} \left (t_ {1, 4}^2 +
t_ {1, 3} t_ {1, 5} \right),
        \\
        & v_ {1, 4} = \frac {1} {125} t_ {1,4} t_ {1, 5}^3, \ v_ {1, 5} = \frac {t_ {1, 5}^5} {3125}, \ v_ {1, 0} =  t_ {1, 0}.
    \end{align*}
\end{example}

\section{Proof of Theorem~\ref{theorem: main}}\label{section: main theorem}
We prove main theorems of this paper in three steps following the numeration of the statements.

Fix $L := \lbrace k_0,\dots,k_N,k_{N+1} \rbrace$ and $K := \lbrace k_0,\dots,k_N\rbrace$. 
Let $M^L$ and $M^K$ be the respective spaces underlying the Hurwitz-Frobenius manifold structure (see Section~\ref{section: HFM}).

Denote by $\pi : M^L \to M^L$ the projection on the locus $v_{N+1,1} = \dots = v_{N+1,k_{N+1}} = 0$.
Let $M^{\res} := \pi(M^L)$.

\begin{proposition}\label{prop: product extends}
    Written in the basis $\frac{\p }{\p v_\bullet}$ the product of $\H_{0; L }$ extends to $M^\res$.
\end{proposition}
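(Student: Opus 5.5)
We will use the algebraic description of Proposition~\ref{prop: SpecAn}. That proposition presents $(\T_{M^L},\circ)$ as the quotient $\A_L=\CC[z,w_1,\dots,w_{N+1},y_1,\dots,y_{N+1}]\otimes\O_{M^L}/\I$. Under $\Psi$ the generators $y_i, w_i, z$ go to the coordinate vector fields $\frac{\p}{\p v_{i,1}}, \frac{\p}{\p v_{i,0}}, \frac{\p}{\p v_{0,2}}$. The idea is to show that the structure constants of $\circ$, written in the basis $\frac{\p}{\p v_\bullet}$, are polynomial in the parameters $v_{N+1,1},\dots,v_{N+1,k_{N+1}}$. They then extend holomorphically to $M^\res$, where these parameters vanish.

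\textbf{Step 1: the generators are polynomial in the parameters.} All generators $P_{1,i},\dots,P_5$ of $\I$ have coefficients polynomial in the $v_{N+1,a}$, $a\ge1$. The only denominators are $v_{i,0}-v_{j,0}$, which stay invertible on $M^\res$ because the condition $v_{N+1,0}\ne v_{j,0}$ is kept.

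\textbf{Step 2: the basis $\frac{\p}{\p v_\bullet}$ in terms of monomials.} For $i\le N+1$ and $a\ge1$ we have $\frac{\p\lambda}{\p v_{i,a}}=(z-v_{i,0})^{-a}$. By Proposition~\ref{prop: product via lambda} this equals $\left(\frac{\p}{\p v_{i,1}}\right)^{\circ a}$, so it corresponds to $[y_i^a]$. Likewise $\frac{\p}{\p v_{0,a}}$ corresponds to $[z^{a-1}]$, since $\frac{\p\lambda}{\p v_{0,a}}=z^{a-1}$ and $a-1\le k_0-2$. Finally $\frac{\p}{\p v_{i,0}}$ corresponds to $[w_i]$. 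Thus the coordinate frame is exactly the basis \eqref{eq: AK basis}, and this identification involves no parameter-dependent change of frame.

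\textbf{Step 3: reducing products to the basis.} Any product of two basis elements must be reduced to the basis using the relations, following the proof of Proposition~\ref{prop: SpecAn} and Lemma~\ref{lemma_basis}. The reductions of $[z^ay_i]$, $[z^aw_i]$, $[y_iy_j]$ and $[w_iy_j]$ with $i\ne j$, and the use of $P_{3,i}$ for $[w_iw_j]$, are all polynomial in the $v_{N+1,a}$.

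\textbf{Main obstacle: $[y_i^a]$ with $a\ge k_i+1$.} For $i=N+1$, reducing $[y_{N+1}^{k_{N+1}+1}]$ via $P_{3,N+1}$ requires dividing by $k_{N+1}v_{N+1,k_{N+1}}$, which vanishes on $M^\res$. I would try to avoid this division. Instead I would compute the product directly from Proposition~\ref{prop: product via lambda}, writing $(z-v_{N+1,0})^{-a}$ as a combination of basis functions plus $\frac{\p\lambda}{\p z}\phi$.

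Near $z=v_{N+1,0}$ we have $\frac{\p\lambda}{\p z}=-\sum_a a v_{N+1,a}(z-v_{N+1,0})^{-a-1}+\text{regular}$. Dividing by it produces $\phi$ with coefficients involving $1/v_{N+1,k_{N+1}}$, so the naive route fails.

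The approach I would pursue is the one suggested by the statement, which asserts an extension only in the basis $\frac{\p}{\p v_\bullet}$. The plan is to show that the coefficient of each basis vector in $X\circ Y$ is given by the residue formula $\eta(X\circ Y,\cdot)=\langle X,Y,\cdot\rangle$, with $\eta$ inverted explicitly. The residues at $v_{N+1,0}$ of $(z-v_{N+1,0})^{-a-b-c}dz/\lambda'$ are polynomial in the $v_{N+1,\bullet}$ up to powers of $v_{N+1,k_{N+1}}$, and these powers should cancel against the $\eta$-inverse.

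If that cancellation is too messy, the fallback is to order the sector $N+1$ by the Euler grading. On that sector, products then land only in the span of $[y_{N+1}^a]$, $[w_{N+1}]$ and lower sectors, and degree counting forces the coefficients to be polynomial. Concretely, I would verify this polynomiality by checking that $\lambda'$ times the degree-bounded span covers $(z-v_{N+1,0})^{-m}$ modulo regular terms with polynomial coefficients. This is the step I expect to require the real work.
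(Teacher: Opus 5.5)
There is a genuine gap, and it is exactly the step you flag as the main obstacle: the pole you hope to cancel is not an artifact of the reduction procedure, it is really there. Put $k=k_{N+1}$. Since $\frac{\p\lambda}{\p v_{N+1,0}}=\sum_{a=1}^{k}a\,v_{N+1,a}(z-v_{N+1,0})^{-a-1}$, the product $\frac{\p\lambda}{\p v_{N+1,1}}\frac{\p\lambda}{\p v_{N+1,k}}=(z-v_{N+1,0})^{-k-1}$ is, as an identity of rational functions, a combination of the $\frac{\p\lambda}{\p v_\bullet}$. So Proposition~\ref{prop: product via lambda} with $\phi=0$ gives
\[
\frac{\p}{\p v_{N+1,1}}\circ\frac{\p}{\p v_{N+1,k}}=\frac{1}{k\,v_{N+1,k}}\Big(\frac{\p}{\p v_{N+1,0}}-\sum_{a=1}^{k-1}a\,v_{N+1,a}\frac{\p}{\p v_{N+1,a+1}}\Big).
\]
For $k=1$ this is simply $\frac{\p}{\p v_{N+1,1}}\circ\frac{\p}{\p v_{N+1,1}}=v_{N+1,1}^{-1}\frac{\p}{\p v_{N+1,0}}$. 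Structure constants in a fixed frame are unique, so no other route can make this coefficient polynomial. Inverting $\eta$ explicitly will not do it, and neither will Euler-degree counting, which allows negative powers of $v_{N+1,k}$ anyway. Hence the statement in your reading is false: it is not true that all structure constants of $\circ$ in the full frame $\frac{\p}{\p v_\bullet}$ of $M^L$ extend. Your plan therefore cannot be completed.

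What the proposition asserts, and what is used afterwards (the index set $I$, Corollary~\ref{corollary: w N+1 ideal}), is narrower. It concerns products of the fields tangent to $M^{\res}$: $\frac{\p}{\p v_{0,a}}$, $\frac{\p}{\p v_{i,a}}$ with $1\le i\le N$, and $\frac{\p}{\p v_{N+1,0}}$, i.e.\ $[z^a]$, $[y_i^a]$, $[w_i]$ ($i\le N$) and $[w_{N+1}]$. For these, the missing observation is that $P_{3,N+1}$ never has to be solved for $y_{N+1}^{k_{N+1}+1}$. It is needed only in products containing $w_{N+1}$, and there it is used in the forward direction $w_{N+1}\mapsto\sum_a a\,v_{N+1,a}y_{N+1}^{a+1}$. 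Concretely:
\begin{itemize}
\item $[w_{N+1}z^a]$ comes from $P_{4,N+1}$.
\item For $[y_iw_{N+1}]$ one substitutes and applies $P_{2,i,N+1}$. The top terms $a\,v_{N+1,a}y_{N+1}^{a+1}/(v_{N+1,0}-v_{i,0})$ reassemble into $w_{N+1}/(v_{N+1,0}-v_{i,0})$, and the rest are polynomial multiples of the $v_{N+1,b}$, $b\ge1$.
\item $[w_iw_{N+1}]$ then follows from $P_{3,i}$, and $[w_{N+1}^2]$ from $P_5$.
\end{itemize}
This is Lemma~\ref{lemma: products with wN+1}. All other reductions divide only by $v_{i,k_i}$ with $i\le N$ and by $v_{i,0}-v_{j,0}$, which are invertible on $M^{\res}$. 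The same computation shows that the $[y_{N+1}^a]$-components of these products vanish on $M^{\res}$, so the product closes on $TM^{\res}$. The later corollaries rely on this closure, and your proposal does not address it.
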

\begin{proof}
    We make the computations in the algebra $\A_L$ of Proposition~\ref{prop: SpecAn} and check that the products in question are polynomial in $v_{N+1,1}, \dots, v_{N+1,k_{N+1}}$.
    
    In the proof of Proposition~\ref{prop: SpecAn} we gave the procedure how to compute multiplication table in the basis~\eqref{eq: AK basis}. With our choice of the basis all coefficients $v_\bullet$ were coming to the multiplication table polynomially expect $v_{i,k_i}$ and $v_{i,0} - v_{j,0}$ --- first due to generator $P_{3,i}$ and second because of generator $P_{2,ij}$. We should take care of all applications of the generator $P_{3,i}$.
    Going through the proof of Proposition~\ref{prop: SpecAn} one notes that these are only applied to the following products below.

    \begin{lemma}\label{lemma: products with wN+1}
        We have in $\A_L$ for all $1 \le i \le N$
        \begin{align}
             & \pi_\ast ([w_{N+1} z^a]) = [w_{N+1} v_{N+1,0}^a],
             \quad
             \pi_\ast ([y_i^a w_{N+1}]) = \left[\frac{w_{N+1}}{(v_{N+1,0} - v_{i,0})^a}\right],
             \\
             & \pi_\ast( [w_i w_{N+1}]) = \left[\sum_{a=1}^{k_i} av_{i,a} \frac{w_{N+1}}{(v_{N+1,0} - v_{i,0})^{a+1}}\right],
             \\
             & \pi_\ast ([w_{N+1}^2]) = \left[w_{N+1} \left( v_{N+1,0}^{k_0-1} + \sum_{a=1}^{k_0-1}(a-1) v_{0,a} v_{N+1,0}^{a-2} - \sum_{i=1}^N \sum_{a=1}^{k_i} \frac{a v_{i,a}}{(v_{N+1,0} - v_{i,0})^{a+1}} \right)\right].
        \end{align}
    \end{lemma}
    \begin{proof}
        \textbf{Step 1}.
        $\pi_\ast ([w_{N+1} z^a])$ is expressed immediately via $P_{4,N+1}$. 
        
        \textbf{Step 2}.
        To compute $\pi_\ast ([y_i^a w_{N+1}])$ we start with $\pi_\ast ([y_i y_{N+1}^{a+1}])$.
        
        By $P_{2,i,N+1}$ we have $[y_i y_{N+1}^{a+1}] = \left[y_{N+1}^a \frac{y_{N+1} - y_i}{v_{N+1,0} - v_{i,0}}\right]$ and for some $R_a$ --- polynomials in $y_i$ and rational functions in $v_\bullet$ we have
        \begin{align*}
            [y_i y_{N+1}^{k_{N+1}+1}] &= \left[\frac{y_{N+1}^{k_{N+1}}}{v_{N+1,0} - v_{i,0}} + \sum_{a=0}^{k_{N+1}-1} y_{N+1}^{a} R_a \right]=
            \\
            & = \left[\frac{w_{N+1} - \sum_{a=1}^{k_{N+1}-1} a v_{N+1,a} y_{N+1}^{a+1}}{k_{N+1} v_{N+1,k_{N+1}} (v_{N+1,0} - v_{i,0})}\right] +  \left[\sum_{a=0}^{k_{N+1}-1} y_{N+1}^{a} R_a\right] \qquad (\text{by } P_{3,N+1}) .
        \end{align*}
        All the summands of the last expression project to zero by $\pi_\ast$.
        
        Now by $P_{3,N+1}$ again we have
        \[
            \pi_\ast ([y_i w_{N+1}]) = \pi_\ast ([ k_{N+1} v_{N+1,k_{N+1}} \cdot y_i y_{N+1}^{k_{N+1}} ]) = \left[\frac{w_{N+1}}{(v_{N+1,0} - v_{i,0})}\right].
        \]
        
        \textbf{Step 3}.
        $\pi_\ast ([w_i w_{N+1}])$ is resolved by $P_{3,i}$ giving the claim immediately.
        
        \textbf{Step 4}.
        Next we have 
        $\pi_\ast ([w_{N+1}^2]) = \pi_\ast ([w_{N+1} (z^{k_0-1} - \sum_{a=1}^{k_0-1} (a-1) v_{0,a} z^{a-2} - \sum_{i=1}^{N} w_i) ])$ by $P_5$. To finalize this expression we apply the computations of the previous steps.

    \end{proof}

\end{proof}
\begin{corollary}\label{corollary: w N+1 ideal}
 $\O_M \langle \frac{\p}{\p t_{N+1,0}} \rangle$ is an ideal of $\H_{0; L}$ product restricted to $M^\res$.
\end{corollary}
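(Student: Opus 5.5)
We need to show that, on $M^\res$, the product with $\frac{\p}{\p t_{N+1,0}}$ stays inside the $\O_M$-span of $\frac{\p}{\p t_{N+1,0}}$.

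\emph{Step 1: relate the flat vector field to $w_{N+1}$ on $M^\res$.} By Remark~\ref{remark: flat coordinates are sector-wise}, the coordinates $t_{N+1,\bullet}$ depend only on $v_{N+1,0},\dots,v_{N+1,k_{N+1}}$, and $t_{N+1,0}=v_{N+1,0}$. All other flat coordinates do not involve $v_{N+1,0}$. Hence
\[
 \frac{\p}{\p t_{N+1,0}} = \frac{\p}{\p v_{N+1,0}} + \sum_{a\ge 1} \frac{\p v_{N+1,a}}{\p t_{N+1,0}} \frac{\p}{\p v_{N+1,a}} .
\]
We would check from the residue definition \eqref{eq: flat coordinates i} that the expansion of $z_{N+1}$ near $z=v_{N+1,0}$ depends on $v_{N+1,0}$ only through the shift $z-v_{N+1,0}$. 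Then the residues defining $t_{N+1,\alpha}$, $\alpha\ge 1$, are independent of $v_{N+1,0}$, and so are their inverse functions $v_{N+1,a}(t)$. So $\frac{\p}{\p t_{N+1,0}}=\frac{\p}{\p v_{N+1,0}}$, which $\Psi$ sends to $w_{N+1}$. This shift-invariance is the point that needs checking.

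\emph{Step 2: the span of $w_{N+1}$ is an ideal.} It suffices to multiply $[w_{N+1}]$ by the generators of $\A_L$ over $\O_M$. These are $[z]$, $[y_i]$ and $[w_i]$ for $1\le i\le N$, together with $[y_{N+1}]$ and $[w_{N+1}]$. Lemma~\ref{lemma: products with wN+1} gives, after $\pi_\ast$, that the products with $z^a$, $y_i^a$, $w_i$ and $w_{N+1}$ are all $\O_M$-multiples of $[w_{N+1}]$. The coefficients are regular on $M^\res$, since only $v_{N+1,0}-v_{i,0}\neq 0$ appears in denominators.

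\emph{Step 3: the remaining product $[y_{N+1}w_{N+1}]$.} Using $P_{3,N+1}$, we write $w_{N+1}=\sum_a a v_{N+1,a} y_{N+1}^{a+1}$, so $y_{N+1}w_{N+1}=\sum_a a v_{N+1,a}y_{N+1}^{a+2}$. We then reduce via $P_{3,N+1}$ and express the result in terms of $y_{N+1}^b$ and $w_{N+1}$. Every coefficient in front of a $y_{N+1}^b$ carries a positive power of some $v_{N+1,a}$, $a\ge1$, so it vanishes under $\pi_\ast$. The polynomiality shown in Proposition~\ref{prop: product extends} guarantees this limit is well defined. Equivalently, the residue-level argument of Step 2 in the lemma (the coefficient of $y_{N+1}^{k_{N+1}}$ scales as $v_{N+1,k_{N+1}}$) gives $\pi_\ast([y_{N+1}w_{N+1}])=c\,[w_{N+1}]$ for some $c\in\O_M$, most likely $c=0$.

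\emph{Step 4: conclude.} Since the product extends to $M^\res$ by Proposition~\ref{prop: product extends}, the limiting product of $\frac{\p}{\p t_{N+1,0}}$ with any vector field lies in $\O_M\langle\frac{\p}{\p t_{N+1,0}}\rangle$. This is the ideal property claimed in the corollary.

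The main obstacle is Step 3: tracking which powers of $v_{N+1,a}$ survive. If this becomes messy, we would instead use Proposition~\ref{prop: product via lambda} directly. On $M^\res$ one has $\frac{\p\lambda}{\p v_{N+1,0}}=0$ formally, so the limit should be taken via the normalized direction $w_{N+1}/(k v_{N+1,k})\sim y_{N+1}^{k+1}$.
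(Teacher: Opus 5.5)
Your Steps 1 and 2 are, in substance, the paper's proof. Step 1 uses $t_{N+1,0}=v_{N+1,0}$ together with the fact that no other flat coordinate depends on $v_{N+1,0}$, which gives $\p/\p t_{N+1,0}=\p/\p v_{N+1,0}\mapsto[w_{N+1}]$. Step 2 uses Lemma~\ref{lemma: products with wN+1}, which shows that $\pi_\ast$ of the product of $[w_{N+1}]$ with each of $[z^a]$, $[y_i^a]$, $[w_i]$ ($i\le N$) and $[w_{N+1}]$ is a regular multiple of $[w_{N+1}]$. These elements span the vector fields tangent to $M^\res$, namely the $\p/\p v_{i,\alpha}$ with $i\le N$ together with $\p/\p v_{N+1,0}$. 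That is the algebra the corollary refers to: $t_{N+1,0}$ becomes the extra open variable, and the normal directions $\p/\p v_{N+1,a}$, $a\ge 1$ (the elements $[y_{N+1}^a]$), are not part of the restricted product.

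The proposal goes wrong in Step~3, and hence in the Step~4 claim that $\p/\p t_{N+1,0}$ times \emph{any} vector field lies in $\O_M\langle\p/\p t_{N+1,0}\rangle$. That claim is false. Since $\p/\p t_{N+1,1}=\p/\p v_{N+1,1}\mapsto[y_{N+1}]$, Theorem~\ref{theorem: HFM} gives $\eta(\p/\p t_{N+1,0},\p/\p t_{N+1,1})=1$ but $\eta(\p/\p t_{N+1,0},e)=0$ for the unit $e=\p/\p t_{0,1}$. Because $\eta(X\circ Y,e)=\eta(X,Y)$, the product $\p/\p t_{N+1,0}\circ\p/\p t_{N+1,1}$ is never a multiple of $\p/\p t_{N+1,0}$, and this persists on $M^\res$.

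Algebraically, your reduction by $P_{3,N+1}$ is circular. Inserting $k_{N+1}v_{N+1,k_{N+1}}y_{N+1}^{k_{N+1}+1}=w_{N+1}-\sum_{a<k_{N+1}}a\,v_{N+1,a}y_{N+1}^{a+1}$ into $\sum_a a\,v_{N+1,a}y_{N+1}^{a+2}$ simply returns $y_{N+1}w_{N+1}$. The actual reduction, as in Lemma~\ref{lemma_basis}, goes through $P_5\,y_{N+1}$ and $P_{1,N+1}$. There the term $z^{k_0-1}y_{N+1}$ produces $[z^{k_0-2}]$ with coefficient $1$. This element is $\p/\p v_{0,k_0-1}$, which is tangent to $M^\res$, so $\pi_\ast$ does not remove it.

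You also cannot invoke Proposition~\ref{prop: product extends} for products involving $[y_{N+1}]$. Its proof only treats fields tangent to $M^\res$, and the full product genuinely fails to extend: $[y_{N+1}^{k_{N+1}}]\cdot[y_{N+1}]$ has coefficient $1/(k_{N+1}v_{N+1,k_{N+1}})$ on $[w_{N+1}]$.

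Drop Step~3 and restrict Step~4 to fields tangent to $M^\res$; what remains is exactly the paper's argument.
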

\begin{proof}
    The claim holds true because of the proposition above and the equality $t_{N+1,0} = v_{N+1,0}$.
\end{proof}

Corollary above was an important point of our investigation. This follows immediately from the definition of the open WDVV solution pair $(F^c,F^o)$ that the tangent space basis vector of the additional variable of $F^o$ generates an ideal. This was a suggestion for us that $t_{N+1,0}$ can play the role of an additional variable for some open WDVV solution.

\begin{corollary}\label{corollary: t locus}
    Part (i) of Theorem~\ref{theorem: main} holds true.
\end{corollary}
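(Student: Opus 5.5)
Part (i) of Theorem~\ref{theorem: main} asserts two things over $t_{N+1,1}=\dots=t_{N+1,k_{N+1}}=0$: that the product extends, and that the pairing extends. We treat them in that order, after first translating the locus into the $v_\bullet$ coordinates.

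\emph{Step 1: the locus.} By Remark~\ref{remark: flat coordinates are sector-wise}, the coordinates $t_{N+1,\alpha}$ depend only on $v_{N+1,0},\dots,v_{N+1,k_{N+1}}$. From \eqref{eq: flat coordinates i} and the expansion of $z_{N+1}$:
\begin{itemize}
\item $t_{N+1,1}=v_{N+1,1}$ and $v_{N+1,k_{N+1}}=(t_{N+1,k_{N+1}}/k_{N+1})^{k_{N+1}}$;
\item more generally, each $v_{N+1,a}$ with $a\ge 1$ is a polynomial in $t_{N+1,1},\dots,t_{N+1,k_{N+1}}$ with coefficients depending on $t_{N+1,0}$, and it vanishes when all these $t$'s vanish (compare the example for $K=\{5,5\}$).
\end{itemize}
The polynomiality follows from quasihomogeneity with respect to $E$: each $v_{N+1,a}$ is a homogeneous expression of positive degree built from the series $z_{N+1}$. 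Hence the $t$-locus in the statement maps into $M^\res$. Conversely, the map $t\mapsto v$ is a polynomial map on the $(N+1)$-st sector, so every quantity that is polynomial in $v_{N+1,\ge1}$ becomes polynomial in $t_{N+1,\ge1}$.

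\emph{Step 2: the product.} By Proposition~\ref{prop: product extends}, the structure constants of $\circ$ in the frame $\partial/\partial v_\bullet$ are polynomial in $v_{N+1,1},\dots,v_{N+1,k_{N+1}}$. To pass to the flat frame we use
$$\frac{\partial}{\partial t_{N+1,\alpha}}=\sum_a \frac{\partial v_{N+1,a}}{\partial t_{N+1,\alpha}}\frac{\partial}{\partial v_{N+1,a}}.$$
The Jacobian entries are polynomial in $t$, and the frame in the other sectors is unchanged. The difficulty is the inverse Jacobian. At $t_{N+1,k_{N+1}}=0$ the map $t\mapsto v$ degenerates: for instance $v_{N+1,k_{N+1}}\sim t_{N+1,k_{N+1}}^{k_{N+1}}$. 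So rewriting $\partial/\partial v$ in terms of $\partial/\partial t$ may introduce poles.

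Our plan is to avoid inverting the Jacobian. Instead we compute $\partial_{t_\alpha}\circ\partial_{t_\beta}$ through Proposition~\ref{prop: product via lambda}, using the functions $\partial_{t}\lambda$. These are polynomial in $t$ and lie in the span of $(z-t_{N+1,0})^{-a}$. On the locus, $\lambda$ loses its pole at $t_{N+1,0}$. Then:
\begin{itemize}
\item a product $\partial_{t_{N+1,\alpha}}\lambda\cdot\partial_{t_{N+1,\beta}}\lambda$ has to be reduced modulo $\partial_z\lambda$ near $t_{N+1,0}$;
\item the analogue of Lemma~\ref{lemma: products with wN+1} shows that everything collapses to multiples of $w_{N+1}$, i.e.\ of $\partial/\partial t_{N+1,0}$, with regular coefficients.
\end{itemize}
We expect this step to be the main obstacle. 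The fallback is a weighted argument in the algebra $\A_L$. We would rescale $y_{N+1}$ by $t_{N+1,k_{N+1}}$ and check that each generator $P_{3,N+1}$ is divided only by powers of $t_{N+1,k_{N+1}}$ that are absorbed by the Jacobian factors.

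\emph{Step 3: the pairing.} For the pairing we use Theorem~\ref{theorem: HFM}: in the flat frame $\eta$ is constant, so it extends trivially. Compatibility $\eta(X\circ Y,Z)=\langle X,Y,Z\rangle$ then holds on the locus by continuity. Note that $t_{N+1,0}$ is never set to zero and stays a free coordinate throughout, as the Remark after the theorem says.
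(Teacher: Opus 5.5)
\textbf{There is a genuine gap in Step~2.} Step~2 is where the whole proof has to happen, and it is only a plan with a fallback. As you have scoped it, it cannot be completed.

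You are right that the inverse Jacobian is the real issue. The paper's own argument passes from Proposition~\ref{prop: product extends} to the flat frame only by checking that $\{t_{N+1,\ge1}=0\}$ and $\{v_{N+1,\ge1}=0\}$ correspond, and never handles the change of frame. But your first bullet includes the products $\partial_{t_{N+1,\alpha}}\circ\partial_{t_{N+1,\beta}}$ with $\alpha,\beta\ge1$, and these genuinely do not extend. Take $L=\{k_0,2\}$. Then $v_{1,1}=t_{1,1}$ and $v_{1,2}=t_{1,2}^2/4$, so $\partial_{t_{1,1}}\lambda=(z-t_{1,0})^{-1}$ and $\partial_{t_{1,2}}\lambda=\frac{t_{1,2}}{2}(z-t_{1,0})^{-2}$. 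Proposition~\ref{prop: product via lambda} with $\phi=0$ then gives $\partial_{t_{1,1}}\circ\partial_{t_{1,1}}=\frac{2}{t_{1,2}}\partial_{t_{1,2}}$, which is exactly the term $\frac12 t_{i,1}^2\log t_{i,2}$ of the potential in Example~2. So the claim of your second bullet fails for such products, and no rescaling of $y_{N+1}$ can repair it.

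Lemma~\ref{lemma: products with wN+1} also does not give what you need. It only computes $\pi_\ast$, so it discards $y_{N+1}^a$-components that merely vanish on the locus. After the change of frame such components need not stay regular: for example $t_{1,1}\partial_{v_{1,2}}=\frac{2t_{1,1}}{t_{1,2}}\partial_{t_{1,2}}$ has no limit. Hence ``everything collapses to multiples of $w_{N+1}$ after $\pi_\ast$'' is not the right statement, even for products that do extend.

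\textbf{What can be proved instead.} What is true, and what is used afterwards in Proposition~\ref{prop: FcFo last}, is that products of the fields $\partial_{t_{i,\alpha}}$ with $(i,\alpha)\in I$ extend; these are sectors $0,\dots,N$ together with $\partial_{t_{N+1,0}}$. Since $\eta$ is constant in the flat frame, it suffices to show that $\langle X,Y,Z\rangle$ is regular near the locus for such $X,Y$ and any flat $Z$. This follows directly from \eqref{eq: three-point function}, without inverting anything:
\begin{itemize}
\item Write $\lambda=\lambda_{N+1}+\lambda_{\mathrm{rest}}$ with $\lambda_{N+1}=\sum_a v_{N+1,a}(z-v_{N+1,0})^{-a}$. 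Then $\partial_{t_{N+1,0}}\lambda=\partial_z\lambda_{\mathrm{rest}}-\partial_z\lambda$.
\item Expand every such factor. The terms in which $\partial_z\lambda$ cancels against $1/\partial_z\lambda$ are rational functions with all poles in $\mathcal R$, so their residues over $\mathcal R$ sum to zero.
\item In the remaining term every factor except possibly $Z\lambda$ is regular at $t_{N+1,0}$. $Z\lambda$ has a pole of order at most $k_{N+1}$ there, while off the locus $1/\partial_z\lambda$ vanishes to order $k_{N+1}+1$. So the residue at $t_{N+1,0}$ is zero.
\item The residues at $\infty$ and at $t_{j,0}$, $j\le N$, are polynomial in $v_{N+1,\ge1}$, hence in $t_{N+1,\ge1}$ by your Step~1.
\end{itemize}

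\textbf{A smaller gap in Step~1.} The conclusion of Step~1 is correct, but quasihomogeneity does not prove it, since it does not exclude negative powers. For $k_{N+1}=3$, for instance, $t_{N+1,2}=v_{N+1,2}v_{N+1,3}^{-1/3}$ is quasihomogeneous but not polynomial. Use Lagrange inversion instead. Writing $z-t_{N+1,0}=\sum_{m\ge1}c_m z_{N+1}^{-m}$, one finds $c_m=t_{N+1,k_{N+1}+1-m}/k_{N+1}$. Then $v_{N+1,a}$ equals $\frac{k_{N+1}}{a}$ times the coefficient of $w^{k_{N+1}}$ in $\bigl(\sum_m c_m w^m\bigr)^a$, which is polynomial with no constant term.
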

\begin{proof}
    It remains to check that the product of $\H_{0; L }$ extends to $M^\res$ in the flat 
    basis $\frac{\p }{\p t_\bullet}$. We recall that $t_{N+1,1}, \dots , t_{N+1,k_{N+1}}$ are functions of $v_{N+1,1}, \dots , v_{N+1,k_{N+1}}$ only (note that the variable $t_{N+1,0}$ is not included in this list). By the formulae~\eqref{eq: flat coordinates i}, the variables $t_{N+1, \alpha } = t_{N+1, \alpha }(v_\bullet)$ for $\alpha = 1, \dots, k_{N+1}-1$ have the poles only at $v_{N+1, k_{N+1}}$. To finish the proof, let us show that $t_{N+1, \alpha}(v_{N+1, \bullet})\mid_{v_{N+1,1} = \dots = v_{N+1,k_{N+1} -1} = 0}=0$ if $\alpha \neq 0,k_{N+1}$. By the definition, in the neighborhood of $v_{N+1, 0}$ we have
    \[
    z^{k_{N+1}}_{N+1}\mid_{v_{N+1,1} = \dots = v_{N+1,k_{N+1} -1} = 0} = \lambda\mid_{v_{N+1,1} = \dots = v_{N+1,k_{N+1} -1} = 0} = \frac{v_{N+1, k_{N+1}}}{(z-v_{N+1,0})^{k_{N+1}}}.
    \]
In what follows, under the restriction $v_{N+1,1} = \dots = v_{N+1,k_{N+1} -1} = 0$ we have
\[
t_{N+1, \alpha} =  \frac{k_{N+1}}{k_{N+1}-\alpha + 1} \res_{z = v_{N+1,0}}  \frac{(v_{N+1, k_{N+1}})^{\frac{k_{N+1} - \alpha +1}{k_{N+1}}}}{(z-v_{N+1,0})^{k_{N+1} - \alpha +1}}dz,
\]
for $\alpha \neq 0$. This residue is zero whereas $\alpha \neq k_{N+1}$.
\end{proof}
In order to prove the second part of the theorem we need to connect the products of the different Hurwitz space.

Let $\mathrm{pr}: M^\res \to M^\res$ be the projection to $v_{N+1,0} = 0$. Then $\mathrm{pr}(M^\res) \cong M^K$.
    
\begin{proposition}\label{prop: mt match}
    Let $\circ_L$ and $\circ_K$ stand for the products of $\H_{0,L}$ and $\H_{0,K}$ respectively.
    
    Then
    \[
        \mathrm{pr}_\ast \left(\frac{\p }{\p v_{i,\alpha}} \circ_L \frac{\p }{\p v_{j,\beta}} \right)
        =
        \mathrm{pr}_\ast \left(\frac{\p }{\p v_{i,\alpha}} \right) \circ_K \ \mathrm{pr}_\ast \left(\frac{\p }{\p v_{j,\beta}} \right)
    \]
    for $0 \le i,j \le N$ and arbitrary $\alpha,\beta$.
\end{proposition}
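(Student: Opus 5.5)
The plan is to compare the two multiplication tables inside the algebras of Proposition~\ref{prop: SpecAn}, that is, in $\A_L$ restricted to $M^\res$ versus $\A_K$. I read $\mathrm{pr}_\ast$ as the map that keeps the components along $\frac{\p}{\p v_{i,\alpha}}$ with $0 \le i \le N$ and discards the components along $\frac{\p}{\p v_{N+1,\bullet}}$ (on $M^\res$ this leaves only $w_{N+1}=\Psi^{-1}(\frac{\p}{\p v_{N+1,0}})$). Under the isomorphism $\Psi$ the vectors $\frac{\p}{\p v_{i,\alpha}}$ with $i \le N$ correspond to the same expressions in $z, y_i, w_i$ ($i\le N$) in both algebras. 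These are $1, z, \dots, z^{k_0-2}$, the $y_i^a$, and the $w_i$, and they coincide with the $\A_K$-basis \eqref{eq: AK basis}. So it suffices to show the following: whenever two such basis elements are multiplied in $\A_L$ and reduced to the $\A_L$-basis, the coefficients of the $\A_K$-basis elements agree, after restricting to $M^\res$, with the reduction in $\A_K$.

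First, I would list the generators of $\I_L$ that involve only indices $0,\dots,N$. These are $P_{1,i}$, $P_{2,ij}$, $P_{3,i}$ and $P_{4,i}$ with $i,j\le N$, and they are literally the generators of $\I_K$. The only generator that differs is $P_5$. In $\A_L$ it reads $P_5^K - w_{N+1}$, where $P_5^K$ is the $K$-version. Its coefficients $v_{0,a}$ do not depend on the $N+1$ sector at all.

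Next, I would run the reduction algorithm from the proof of Proposition~\ref{prop: SpecAn}, including Lemma~\ref{lemma_basis}, on a product of two $K$-basis elements, doing it simultaneously in $\A_K$ and in $\A_L$. Every step uses the same generator except where $P_5$ is applied. In $\A_L$ that step produces the extra summand $w_{N+1}$ multiplied by some $K$-monomial. Such terms may then be multiplied further, for instance when $P_5 y_i$ is used to resolve $[w_iy_i]$, or when the $w_i w_j$ products are resolved. By Lemma~\ref{lemma: products with wN+1}, on $M^\res$ every product $w_{N+1}\cdot z^a$, $w_{N+1} y_i^a$, $w_{N+1}w_i$ and $w_{N+1}^2$ is again a function multiple of $[w_{N+1}]$. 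This is the content of Corollary~\ref{corollary: w N+1 ideal}. Hence all extra terms remain in $\O_M\langle w_{N+1}\rangle$ and never feed back into the $K$-basis components. Modulo that ideal, the $\A_L$ computation is therefore term-by-term the $\A_K$ computation. The coefficients obtained depend only on $v_{i,\bullet}$ with $i\le N$, so no dependence on $v_{N+1,0}$ can arise, and $\mathrm{pr}_\ast$ gives exactly the $\circ_K$ product.

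The main obstacle is bookkeeping rather than any single step. I must check that, along the reduction of a $K$-product, no intermediate step ever produces $y_{N+1}$. The span of the $y_{N+1}^a$ is not an ideal, because $P_{2,i,N+1}$ sends $y_iy_{N+1}$ back into $y_i$, so a stray $y_{N+1}$ would contaminate the $K$-components. I would verify this by inspecting each generator used. Only $P_5$ introduces $N+1$ data, and it introduces $w_{N+1}$, never $y_{N+1}$. Products with $w_{N+1}$ are then handled entirely by Lemma~\ref{lemma: products with wN+1}, whose proof does pass through $y_{N+1}$ internally but whose output on $M^\res$ is a multiple of $w_{N+1}$ alone.
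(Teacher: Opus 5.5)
Your proposal is correct and follows essentially the same route as the paper. The only generator of the ideal that changes from $\A_K$ to $\A_L$ is $P_5$, which acquires the extra summand $w_{N+1}$, and since the span of $[w_{N+1}]$ is an ideal over $M^\res$ (Corollary~\ref{corollary: w N+1 ideal}), these extra terms are killed by $\mathrm{pr}_\ast$. Your worry about stray $y_{N+1}$ components is settled by the same observation: an $\A_K$-reduction $e_1e_2 - R_K=\sum_g h_g\, g$ with $g$ ranging over generators of $\I_K$ transfers verbatim to $\A_L$ over $M^\res$ with $P_5^K\equiv w_{N+1}$, so the discrepancy is $h_5\, w_{N+1}$, which is a function multiple of $[w_{N+1}]$.
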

\begin{proof}
    In the $v_\bullet$ coordinates this follows from Corollary~\ref{corollary: w N+1 ideal}, Proposition~\ref{prop: SpecAn} and Proposition~\ref{prop: product extends}.

    In particular, we have that $\pi_\ast \A_L \cong \CC[z, w_1,\dots,w_N,w_{N+1}, y_1,\dots,y_N] \otimes \pi_\ast \O_M / \widetilde \I$ where $\widetilde \I$ is generated by $P_{1,i}, P_{2,ij}, P_{3,i}, P_{4,i}$, $i = 1,\dots, N$ and also
    \[
        z^{k_0-1} - \sum_{a=1}^{k_0-1} (a-1) v_{0,a} z^{a-2} - \sum_{i=1}^{N} w_i - w_{N+1}.
    \]

    The generators of $\widetilde I$ only differ from the generators of $\I$ in one additional appearance of $w_{N+1}$, however this appearance plays no role after $\mathrm{pr}_\ast$ by the corollary above.

    Since $t_{i,\alpha}$ and $t_{j,\beta}$ do not depend on $t_{N+1,0}$ and $t_{N+1,0} = v_{N+1,0}$, the statement also holds in the $t_{\bullet}$ coordinates.
    
\end{proof}

Let $I$ stand for the index set of $M^\res$ variables. We have
\[
    I = \left\lbrace (0, 1),\dots, (0, k_0-1), (1, 0) \dots, (N, k_N), (N+1, 0) \right\rbrace.
\]
\begin{proposition}\label{prop: FcFo last}
    We have for any $(i,\alpha), (j,\beta) \in I$
    \begin{align*}
        \mathrm{pr}_\ast \left( \frac{\p }{\p t_{i, \alpha}} \circ_L \frac{\p }{\p t_{j, \beta}} \right) & = \sum_{(m,\gamma), (n,\delta) \in I} \frac{\p^3 \F^{K}}{\p t_{i, \alpha} \p t_{j, \beta} \p t_{m, \gamma}} \eta^{m,\gamma ; n, \delta} \frac{\p }{\p t_{n, \delta}} 
        \\
        & + \left.\frac{\p^3 \F^L}{\p t_{i, \alpha} \p t_{j, \beta} \p t_{N+1, 1}}\right|_{t_{N+1,1} = \dots = t_{N+1,k_{N+1}} = 0} \frac{\p }{\p t_{N+1, 0}},
    \end{align*}
    where we assume the first summand on the right hand side to be zero if one of the indices is $(N+1,0)$.
\end{proposition}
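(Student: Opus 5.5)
The plan is to start from the $\H_{0;L}$ product expanded in the flat frame of $L$,
\[
\frac{\p }{\p t_{i,\alpha}} \circ_L \frac{\p }{\p t_{j,\beta}} = \sum \frac{\p^3 \F^L}{\p t_{i,\alpha}\p t_{j,\beta}\p t_{m,\gamma}}\, \eta_L^{m,\gamma;n,\delta}\, \frac{\p}{\p t_{n,\delta}},
\]
where the sum runs over all indices of $\H_{0;L}$. By Theorem~\ref{theorem: HFM} the inverse pairing $\eta_L^{-1}$ is block diagonal. The index set splits into two parts.

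\begin{itemize}
\item The indices of $K$, where $\eta_L^{-1}$ coincides with $\eta_K^{-1}$.
\item The block of the sector $N+1$. There the only entries are $\eta^{(N+1,0);(N+1,1)} = 1$ and $\eta^{(N+1,\gamma);(N+1,k_{N+1}+1-\gamma)} = k_{N+1}$ for $\gamma \ge 2$.
\end{itemize}

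Accordingly, I would split the sum into three pieces.

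\begin{enumerate}
\item The $K$-block. It gives $\sum \langle \p_{t_{i,\alpha}},\p_{t_{j,\beta}},\p_{t_{m,\gamma}}\rangle_L\, \eta_K^{m,\gamma;n,\delta}\, \p_{t_{n,\delta}}$ over $(m,\gamma),(n,\delta)$ belonging to $K$.
\item The term with $m=(N+1,1)$, $n=(N+1,0)$. It gives exactly $\frac{\p^3 \F^L}{\p t_{i,\alpha}\p t_{j,\beta}\p t_{N+1,1}}\,\p_{t_{N+1,0}}$.
\item The remaining terms, whose output vectors are $\p_{t_{N+1,\delta}}$ with $\delta \ge 1$.
\end{enumerate}

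By Corollary~\ref{corollary: t locus} the product extends to $M^\res$, where $t_{N+1,\delta}=0$ for $\delta\ge1$. The projection $\mathrm{pr}_\ast$ to the tangent space of $M^\res$, spanned by $\p_{t_{N+1,0}}$ and the $K$-directions, kills the vectors $\p_{t_{N+1,\delta}}$ with $\delta\ge 1$. Restricting to the locus then removes piece 3 and turns piece 2 into the second summand of the claim.

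It remains to identify piece 1 with the $\F^K$ term. For $(i,\alpha),(j,\beta)$ both indices of $K$, I would use Proposition~\ref{prop: mt match}, already transferred to the $t_\bullet$ coordinates. It states that $\mathrm{pr}_\ast(\p_{t_{i,\alpha}}\circ_L\p_{t_{j,\beta}}) = \p_{t_{i,\alpha}}\circ_K\p_{t_{j,\beta}}$, up to the $\p_{t_{N+1,0}}$-component. The $K$-components of the left side are exactly piece 1. The right side equals $\sum \frac{\p^3\F^K}{\p t_{i,\alpha}\p t_{j,\beta}\p t_{m,\gamma}}\eta_K^{m,\gamma;n,\delta}\p_{t_{n,\delta}}$ by Theorem~\ref{theorem: HFM} for $K$. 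Comparing $K$-components gives the claim. A further justification is Remark~\ref{remark: flat coordinates are sector-wise}: flat coordinates of sectors $0,\dots,N$ are the same functions of $v_\bullet$ for $K$ and for $L$, so the flat frames are compatible under $\mathrm{pr}$.

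If one of the indices is $(N+1,0)$, Corollary~\ref{corollary: w N+1 ideal} shows that the product lies in $\O_M\langle \p_{t_{N+1,0}}\rangle$. Hence all $K$-components vanish, consistent with declaring the first summand zero. The $\p_{t_{N+1,0}}$-coefficient is still given by piece 2.

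The main obstacle is the bookkeeping in the identification of piece 1. Proposition~\ref{prop: mt match} involves the further projection $t_{N+1,0}\mapsto 0$, while the statement here keeps $t_{N+1,0}$ as a free variable. I would handle this by noting two facts. First, $\F^K$ does not involve $t_{N+1,0}$. Second, the $K$-components of the restricted $L$-product are independent of $t_{N+1,0}$. The second fact follows from Proposition~\ref{prop: mt match}'s argument: the presentation $\pi_\ast\A_L$ differs from $\A_K$ only by the term $w_{N+1}$, and $w_{N+1}$ generates an ideal and contributes only along $\p_{t_{N+1,0}}$. One must also check that the three-point functions $\langle\cdot,\cdot,\p_{t_{m,\gamma}}\rangle_L$ for $m$ in $K$ have finite limits on the locus, even though individual third derivatives of $\F^L$ may not. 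This finiteness comes from Corollary~\ref{corollary: t locus}, because those functions are $\eta$-pairings of the extended product with flat vectors.
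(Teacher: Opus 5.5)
Your proposal is correct and follows the same route as the paper. You expand the $\H_{0;L}$ product via $\F^L$, identify its $K$-components on the restricted locus with the $\H_{0;K}$ product through Proposition~\ref{prop: mt match} and the sector-wise form of the flat coordinates, and read off the $\p_{t_{N+1,0}}$-component from $\eta^{(N+1,1);(N+1,0)}=1$. Your extra bookkeeping fills in details the paper's three-line proof leaves implicit: the block-diagonal inverse pairing, the independence of the $K$-components from $t_{N+1,0}$ via the ideal generated by $w_{N+1}$, and the finiteness of the relevant three-point functions on the locus.
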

\begin{proof}
    The product $\circ_L$ of $\H_{0; L}$  is integrated by the potential $\F^L$. Over $M^K$ this product coincides with the product of $\H_{0; K}$ by Proposition~\ref{prop: mt match}. Due to the specific form of the flat coordinates respective structure constants are given by the third derivatives of $\F^K$. This completes the proof.
\end{proof}

\begin{corollary}
    Part (ii) of Theorem~\ref{theorem: main} holds true.
\end{corollary}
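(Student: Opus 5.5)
The plan is to read off the open WDVV system from associativity of the restricted product $\circ_L$ on $M^\res$, using Proposition~\ref{prop: FcFo last} to identify its structure constants with those of the product \eqref{eq: open extension product} built from $(F^c,F^o)$.

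First, by part (i) (Corollary~\ref{corollary: t locus}) the product $\circ_L$ extends to the locus $t_{N+1,1}=\dots=t_{N+1,k_{N+1}}=0$. There it remains associative and commutative, since these identities hold on the open dense set and pass to the limit by continuity. By Corollary~\ref{corollary: w N+1 ideal}, the span of $\frac{\p}{\p t_{N+1,0}}$ is an ideal. Hence the $\O$-module $\T' := \bigoplus_{(i,\alpha)\in I}\O\,\frac{\p}{\p t_{i,\alpha}}$ modulo the directions $t_{N+1,a}$, $a\ge1$, carries an associative commutative product. Here I use $\mathrm{pr}_\ast$, or more precisely the quotient by the directions transverse to $M^\res$, as in Propositions~\ref{prop: mt match} and~\ref{prop: FcFo last}. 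To make this legitimate, I would check that the components of $\frac{\p}{\p t_{i,\alpha}}\circ_L\frac{\p}{\p t_{j,\beta}}$ along $\frac{\p}{\p t_{N+1,a}}$ with $a\ge 1$ form an ideal modulo which associativity descends. The cleanest route is to observe that the $v$-computation in Lemma~\ref{lemma: products with wN+1} only ever produces $w_{N+1}$ (equivalently $\frac{\p}{\p t_{N+1,0}}$) among the $(N+1)$-sector generators after applying $\pi_\ast$.

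Second, I substitute Proposition~\ref{prop: FcFo last}. The structure constants of the induced product are exactly $\frac{\p^3 F^c}{\p t\p t\p t}\eta^{-1}$ on the $K$-directions, plus $\frac{\p^2 F^o}{\p t\p t}$ on $\frac{\p}{\p t_{N+1,0}}$, with $F^o := \frac{\p \F^L}{\p t_{N+1,1}}\big|_{\text{locus}}$. Because differentiation in $t_{i,\alpha}$, $(i,\alpha)\in I$, commutes with restriction to the locus, we have $\frac{\p^2F^o}{\p t\p t} = \frac{\p^3 \F^L}{\p t\p t\p t_{N+1,1}}\big|$. This coincides with \eqref{eq: open extension product}, where $t_0 := t_{N+1,0}$ is the additional variable. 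The first summand vanishes when an index is $(N+1,0)$, which matches the fact that $F^c=\F^K$ does not depend on $t_{N+1,0}$. Expanding $(\p_\alpha\circ\p_\beta)\circ\p_\gamma=(\p_\gamma\circ\p_\beta)\circ\p_\alpha$ then gives two sets of equations. The $\p_{t_{n,\delta}}$-components reproduce WDVV for $\F^K$, which is known. The $\p_{t_{N+1,0}}$-component gives exactly \eqref{eq:open WDVV}, using $\p_{t_0}\circ\p_{\gamma}=\frac{\p^2F^o}{\p t_0\p t_\gamma}\p_{t_0}$, which holds by the ideal property.

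Third, I verify the normalizations. I need $\frac{\p^2F^o}{\p t_{0,1}\p t_{N+1,0}} = \eta^L(\p_{t_{0,1}}\text{-unit},\ldots)$; since $\p_{t_{0,1}}$ is the unit, $\frac{\p^3\F^L}{\p t_{0,1}\p t_{N+1,0}\p t_{N+1,1}}=\eta^L_{(N+1,0),(N+1,1)}=1$ by Theorem~\ref{theorem: HFM}. For the vanishing $\frac{\p^2 F^o}{\p t_0\p t_n}=0$ with $t_n$ the top $K$-variable, I would use the pairing table together with the ideal formulas of Lemma~\ref{lemma: products with wN+1}. Quasihomogeneity follows by restricting $E$, since $t_{N+1,1}$ has degree $(k_0+1)/k_0$ and the locus is $E$-invariant. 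The hypothesis $k_{N+1}\ge2$ enters here: it ensures that $t_{N+1,1}$ is a coordinate distinct from $t_{N+1,k_{N+1}}$, and that the locus sets $t_{N+1,k_{N+1}}$ to zero rather than $t_{N+1,1}$ alone, so that $\eta$ restricted to the locus is well behaved.

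I expect the main obstacle to be the first step: justifying that associativity of $\circ_L$ descends to the quotient that discards the transverse directions $\p_{t_{N+1,a}}$, $a\ge1$. The pairing $\eta^L$ degenerates on the locus, where $\eta_{(N+1,\alpha),(N+1,k+1-\alpha)}$ couples vanishing coordinates. I would handle this in the $v$-algebra $\pi_\ast\A_L$ of Proposition~\ref{prop: mt match}, showing that the classes $[y_{N+1}^a]$ project to zero, as in Step 2 of Lemma~\ref{lemma: products with wN+1}, and form an ideal there.
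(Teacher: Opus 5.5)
Your route is the paper's. You use associativity of $\circ_L$ on the locus and identify its structure constants via Proposition~\ref{prop: FcFo last}. You then read off \eqref{eq:open WDVV} as the $\p_{t_{N+1,0}}$-component of $(\p_\alpha\circ\p_\beta)\circ\p_\gamma=(\p_\gamma\circ\p_\beta)\circ\p_\alpha$. The paper simply asserts that the restricted product is ``associative by construction''. You rightly flag this as the delicate point, but the mechanism you propose for it fails.

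Let $V$ be the span of $\p_{t_{i,\alpha}}$, $(i,\alpha)\in I$, and $J$ the span of $\p_{t_{N+1,a}}$, $a\ge 1$. Off the locus, $J$ is also the span of $\p_{v_{N+1,a}}$, $a\ge1$, since the flat coordinates are sector-wise.
\begin{itemize}
\item $J$ is not an ideal. In $\A_L$ the relation $P_{1,N+1}$ gives $[y_{N+1}][z]=[1]+v_{N+1,0}[y_{N+1}]$, with $[z]\in V$.
\item Worse, $J\circ J$ does not extend to the locus. The product $[y_{N+1}][y_{N+1}^{k_{N+1}}]=[y_{N+1}^{k_{N+1}+1}]$ is obtained from $P_{3,N+1}$ by dividing by $k_{N+1}v_{N+1,k_{N+1}}$. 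So there is no algebra on $TM^L|_{M^\res}$ of which to take a quotient.
\end{itemize}

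The correct picture is the opposite one. $V$ is closed under $\circ_L$ on the locus: the $J$-components of $V\circ V$ vanish there, which is the content of Proposition~\ref{prop: product extends} and Lemma~\ref{lemma: products with wN+1}. So you are restricting to a subalgebra, not passing to a quotient.

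Closure alone does not yet give associativity. Consider the identity $\sum_\mu c^\mu_{\alpha\beta}c^\nu_{\mu\gamma}=\sum_\mu c^\mu_{\gamma\beta}c^\nu_{\mu\alpha}$, valid off the locus, with structure constants in the frame $\p_{v_\bullet}$ and $\alpha,\beta,\gamma,\nu\in I$. The terms with $\mu\notin I$ tend to zero because $c^\mu_{\alpha\beta}\to 0$, but only provided that $c^\nu_{\mu\gamma}$ stays bounded. Here $c^\nu_{\mu\gamma}$ is the $V$-part of $J\circ V$. This boundedness is the missing ingredient, and it holds:
\begin{itemize}
\item The products $[y_{N+1}^a][z^b]$, $[y_{N+1}^a][y_i^b]$ and $[y_{N+1}^a][w_i]$ for $i\le N$ only use $P_{1,N+1}$, $P_2$ and $P_{3,i}$. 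These divide only by $v_{i,k_i}$ and $v_{N+1,0}-v_{i,0}$.
\item The product $[y_{N+1}^a][w_{N+1}]$ can be computed after rewriting $w_{N+1}$ via $P_5$.
\end{itemize}
So $v_{N+1,k_{N+1}}$ never appears in a denominator. With this replacing the ideal claim, your Steps 1--2 become a complete, and more careful, version of the paper's argument.

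Your Step 3 is not part of (ii); normalization and quasihomogeneity are Proposition~\ref{prop: quasihomogenity}. One claim in it is false. The derivative $\p^2F^o/\p t_{N+1,0}\p t_{0,k_0-1}=\p\widetilde{\lambda}/\p t_{0,k_0-1}|_{z=t_{N+1,0}}$ does not vanish in general. In Example~1 of Section~\ref{section: examples} it equals $t_{0,3}+t_{1,0}^2$. What does vanish is $\p^2F^o/\p t_{0,1}\p t_{n,\alpha}=\eta^L(\p_{t_{N+1,1}},\p_{t_{n,\alpha}})=0$ for $n\le N$.

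Also, $\eta^L$ itself does not degenerate on the locus, since it is constant in the flat frame. Only its restriction to $V$ degenerates, with kernel $\mathcal{O}\,\p_{t_{N+1,0}}$, because the partner $\p_{t_{N+1,1}}$ is transverse. This is exactly why one obtains an open extension rather than a Frobenius structure.
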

\begin{proof}
    The pair of functions that is claimed to satisfy open WDVV equation integrates the product over $M^{res}$. 

    This product restricts for the product of $\H_{0; K}$ over $\mathrm{pr}(M^\res)$ by Proposition~\ref{prop: mt match}.

    By Proposition~\ref{prop: FcFo last} this product is integrated by two functions exactly as in Eq.~\eqref{eq: open extension product}. It is associative by the construction, what is equivalent to open WDVV equation.
\end{proof}

Let $\delta = 1 - \frac{2}{k_0}$ so that $E \cdot \F^{K} = (3-\delta) \F^{K}$ modulo the quadratic terms.
All solutions of the open WDVV equations, considered in the works~\cite{HS12,PST14,BCT19}, also satisfy the quasihomogeneity condition
\begin{gather}\label{eq:homogeneity for Fo}
E^\alpha\frac{\d F^o}{\d t_\alpha}+\frac{1-\delta}{2}t_0\frac{\d F^o}{\d t_0}=\frac{3-\delta}{2}F^o + \text{the linear terms in $t_{\bullet}$}
\end{gather}

The open extension potential of our theorem inherits quasihomogeneity property from the Hurwitz--Frobenius structure. 
\begin{proposition} \label{prop: quasihomogenity}
Let $(F^{c}, F^o)$ be a solution of the open WDVV equation from part (ii) of Theorem~\ref{theorem: main}.
    \begin{enumerate}
        \item[(i)] The equations
        \[\frac{\d^2 F^o}{\d t_{0,1}\d t_{n,\alpha}}=0,\qquad \frac{\d^2 F^o}{\d t_{0,1} \d t_{N+1,0}}=1 \]
hold for any $n \leq N$ and $\alpha \leq k_n$.
        \item[(ii)] $F^o$ satisfies the quasihomogeneity condition~\eqref{eq:homogeneity for Fo}
    \end{enumerate}
\end{proposition}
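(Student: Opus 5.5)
Both parts come from $F^o=\partial\F^L/\partial t_{N+1,1}$ restricted to the locus $t_{N+1,a}=0$, $a\ge1$, together with the structure of $\H_{0;L}$ from Theorem~\ref{theorem: HFM}.

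For part (i), note that $\partial/\partial t_{0,1}=\partial/\partial v_{0,1}$ is the unit $e$ of $\circ_L$. Hence
\[
\frac{\p^3 \F^L}{\p t_{0,1}\p t_{j,\beta}\p t_{N+1,1}}=\eta\Big(\frac{\p}{\p t_{j,\beta}},\frac{\p}{\p t_{N+1,1}}\Big).
\]
By Theorem~\ref{theorem: HFM}, the only nonzero pairing involving $\partial/\partial t_{N+1,1}$ is with $\partial/\partial t_{N+1,0}$, and it equals $1$. So $\partial^2F^o/\partial t_{0,1}\partial t_{n,\alpha}=0$ for $n\le N$, and $\partial^2F^o/\partial t_{0,1}\partial t_{N+1,0}=1$. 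These constants survive restriction to the locus.

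For part (ii), start from $E_L\cdot\F^L=(3-\delta)\F^L$ modulo quadratic terms, where $E_L$ is the Euler field of $\H_{0;L}$ and $3-\delta=2(1+1/k_0)$. Differentiate with respect to $t_{N+1,1}$, using $[\partial_{t_{N+1,1}},E_L]=d_{N+1,1}\partial_{t_{N+1,1}}$ with $d_{N+1,1}=\frac{k_0+1}{k_0}$. This gives
\[
E_L\cdot \partial_{t_{N+1,1}}\F^L=(3-\delta-d_{N+1,1})\,\partial_{t_{N+1,1}}\F^L
\]
modulo linear terms. Here $3-\delta-d_{N+1,1}=\frac{k_0+1}{k_0}=\frac{3-\delta}{2}$.

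Next restrict to the locus. Every term $d_{N+1,a}t_{N+1,a}\partial_{t_{N+1,a}}$ with $a\ge1$ carries a factor $t_{N+1,a}$, so it vanishes there, provided $\partial_{t_{N+1,a}}\partial_{t_{N+1,1}}\F^L$ stays finite on the locus. I would justify this finiteness from part (i) of Theorem~\ref{theorem: main}: the third derivatives extend, so the second derivatives extend up to constants of integration. This is the main obstacle, since $\F^L$ itself need not extend. If needed, I would treat these terms modulo linear/quadratic corrections, which the statement allows.

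What remains of $E_L$ is $E_K$ (the Euler field of $\H_{0;K}$, with the same weights for all indices $\le N$) plus $\frac{1}{k_0}t_{N+1,0}\partial_{t_{N+1,0}}$. Since $\frac{1-\delta}{2}=\frac{1}{k_0}$, this is exactly the $t_0$-term of \eqref{eq:homogeneity for Fo} with $t_0=t_{N+1,0}$. Comparing the two sides yields \eqref{eq:homogeneity for Fo}, modulo linear terms.
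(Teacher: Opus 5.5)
Your part (i) is the paper's argument. For part (ii) you use the same strategy as the paper: differentiate the Euler homogeneity of $\F^L$ in $t_{N+1,1}$, then restrict. Your bookkeeping is the one that actually works.

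The paper first forms $\widetilde E=E|_{t_{N+1,a}=0,\,a\ge1}$ and uses $[\widetilde E,\partial_{t_{N+1,1}}]=0$. It then substitutes $\widetilde E\cdot\F^L=(3-\delta)\F^L$ \emph{before} differentiating in the transverse direction $t_{N+1,1}$. That identity only holds on the locus, and the discarded term $d_{N+1,1}t_{N+1,1}\partial_{t_{N+1,1}}\F^L$ is exactly your commutator contribution $d_{N+1,1}F^o$. So the paper's chain ends with $\widetilde E\cdot F^o=2(1+\frac{1}{k_0})F^o$, which is not \eqref{eq:homogeneity for Fo}. Your eigenvalue $\frac{k_0+1}{k_0}=\frac{3-\delta}{2}$ is the correct one: in Example~1, with $k_0=4$, every monomial of $F^o$ has degree $\frac54$.

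The weak point is your justification of the restriction step. What part (i) of Theorem~\ref{theorem: main} provides does not control derivatives with two indices among $(N+1,a)$, $a\ge1$, and these really are singular. For $L=\{4,3\}$ one has $\partial^2\F^L/\partial t_{1,1}\partial t_{1,2}=t_{1,2}/t_{1,3}$ and $\partial^3\F^L/\partial t_{1,1}^2\partial t_{1,3}=1/t_{1,3}$. So the relevant third derivatives do not extend, and the joint limit is not even defined.

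What works is a two-step restriction, which is also the only reading under which $F^o$ is well defined. Put $k=k_{N+1}$ and $Z'=\{t_{N+1,1}=\dots=t_{N+1,k-1}=0,\ t_{N+1,k}\neq0\}$.
\begin{itemize}
\item On $Z'$ all derivatives of $\F^L$ are analytic, so the terms with $1\le a<k$ vanish there.
\item For $a=k$, a bi-degree count shows that every monomial of $v_{N+1,b}(t_{N+1,\bullet})$ with $b<k$ contains some $t_{N+1,\alpha}$ with $2\le\alpha\le k-1$. Hence on $Z'$ one has $v_{N+1,b}=0$ for $b<k$, and $\partial_{t_{N+1,k}}$ is a multiple of $\partial_{v_{N+1,k}}$.
\item Thus near $z=t_{N+1,0}$ the function $\lambda$ is $v_{N+1,k}(z-t_{N+1,0})^{-k}$ plus a regular part. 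A residue count using $k\ge2$ gives $\langle\partial_{t_{N+1,k}},\partial_{t_x},\partial_{t_{N+1,1}}\rangle=0$ for every flat coordinate $t_x$ along $Z'$.
\item Therefore $\partial_{t_{N+1,k}}\partial_{t_{N+1,1}}\F^L$ is constant on $Z'$. The $a=k$ term is at most linear in $t_{N+1,k}$ and vanishes on the locus.
\end{itemize}
In the example, $\partial_{t_{1,3}}\partial_{t_{1,1}}\F^L=-\frac{t_{1,2}^2}{2t_{1,3}^2}+\frac{t_{1,1}}{t_{1,3}}$, which vanishes identically on $Z'$. The paper uses this vanishing without comment; with it supplied, your argument is complete.
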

\begin{proof}
    Since $\F^{K}$ for $K = \{k_0,\dots,k_{N+1}\}$ is the potential of a Dubrovin-Frobenius structure, we have
\begin{align*}
   \frac{\d^2 F^o}{\d t_{0,1}\d t_{n,\alpha}} &=   \left.\frac{\d^3 \F^{K}}{\d t_{N+1,1} \d t_{0,1}\d t_{n,\alpha}}\right|_{t_{N+1,1} = \dots = t_{N+1,k_{N+1}} = 0} = \eta(\frac{\d}{\d t_{N+1,1}}, \frac{\d}{\d t_{n,\alpha}}) = 0, \quad \forall \alpha \leq k_n \\
   \frac{\d^2 F^o}{\d t_{0,1}\d t_{N+1,0}} &=   \left.\frac{\d^3 \F^{K}}{\d t_{N+1,1} \d t_{0,1}\d t_{N+1,0,}}\right|_{t_{N+1,1} = \dots = t_{N+1,k_{N+1}} = 0} = \eta(\frac{\d}{\d t_{N+1,1}}, \frac{\d}{\d t_{N+1,0}}) = 1, 
\end{align*} 
giving part $(i)$.

Denote by $\widetilde{E} = E|_{t_{N+1,1} = \dots = t_{N+1,k_{N+1}}=0}$. We have $[\widetilde{E}, \frac{\d}{\d t_{N+1,1}}] =0$. Also note that
\[
\left.\widetilde{E} \cdot \F^{K}\right|_{t_{N+1,1} = \dots = t_{N+1,k_{N+1}} = 0} = \left.E \cdot \F^{K}\right|_{t_{N+1,1} = \dots = t_{N+1,k_{N+1}} = 0} = \left.2(1 + \frac{1}{k_0})\F^{K}\right|_{t_{N+1,1} = \dots = t_{N+1,k_{N+1}} = 0}.
\]
Then 
\begin{align*}
&\widetilde{E}\cdot F^o = \widetilde{E} \cdot \left.\frac{\p \F^{K}}{\p t_{N+1, 1}} \right|_{t_{N+1,1} = \dots = t_{N+1,k_{N+1}} = 0} = \frac{\d}{\d t_{N+1,1}}\left.\left(\widetilde{E} \cdot \F^{K}\right)\right|_{t_{N+1,1} = \dots = t_{N+1,k_{N+1}} = 0} = \\
&=  \left.\frac{\d}{\d t_{N+1,1}}2(1 + \frac{1}{k_0})\F^{K}\right|_{t_{N+1,1} = \dots = t_{N+1,k_{N+1}} = 0} = 2(1 + \frac{1}{k_0}) \cdot F^o.
\end{align*}
 This gives part $(ii)$ and completes the proof.
\end{proof}

\section{Proof of Theorem~\ref{theorem: expl form}}\label{section: expl form}

In this section we find the explicit form of the open extension potential $F^o$.

For any $\A_L$--element $a$ denote by $\{w_{N+1}\} a$ the $[w_{N +1}]$ component of $a$ written in the basis~\eqref{eq: AK basis}.

\begin{proposition}\label{prop: more products}
    Let $i \neq j$ and $i,j \ge 1$. We have
    \begin{align}
        & \{w_{N+1}\} ([z^{k_0-1+r}]) = v_{N+1,0}^r, \ \{w_{N+1}\} ([w_i w_j]) = 0 , \ \{w_{N+1}\} ([z w_i ]) = 0 ,
        \label{eq: more products l1}
        \\
        & \{w_{N+1}\} ( [y_i^r w_j]) = 0, \ \{w_{N+1}\} ( [y_i^r y_j^s]) = 0, \ \{w_{N+1}\} ([y_i^r z^a]) = 0,
        \label{eq: more products l2}
        \\
        & \{w_{N+1}\} ([y_i^rw_i]) = - \frac{1}{(v_{N+1,0} - v_{i,0})^r},
        \ \{w_{N+1}\} ([w_i^2]) = - \sum_{a=1}^{k_i} \frac{a v_{i,a} }{(v_{N+1,0} - v_{i,0})^{a+1}},
        \\
        & \{w_{N+1}\} ([y_i^{k_i+1}]) = 0, \ \{w_{N+1}\} ([(y_i^{k_i+2}]) = -\frac{1}{k_i v_{i,k_i}} \frac{1}{v_{N+1,0} - v_{i,0}}.
    \end{align}
\end{proposition}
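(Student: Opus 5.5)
The plan is to compute each $\{w_{N+1}\}$--component by reducing the monomial to the basis~\eqref{eq: AK basis} of $\A_L$ with the same steps as in the proof of Proposition~\ref{prop: SpecAn} and Lemma~\ref{lemma_basis}, working over $M^\res$. At each step I only track the coefficient of $[w_{N+1}]$. The guiding observation concerns the generators $P_{1,i},P_{2,ij},P_{3,i},P_{4,i}$ with $i\le N$: they do not involve $w_{N+1}$ or $y_{N+1}$. A reduction that uses only these generators therefore stays inside the span of $[z^a]$ with $a\le k_0-2$, $[w_i]$ and $[y_i^a]$ for $i\le N$. Such a reduction has zero $[w_{N+1}]$--component. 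A $[w_{N+1}]$--component can be created only by $P_5$, which contains the summand $-w_{N+1}$, or by the reduction of a product $w_{N+1}\cdot(\dots)$. The latter is given explicitly by Lemma~\ref{lemma: products with wN+1}.

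I first handle the monomials never routed through $P_5$. For $[zw_i]$ I use $P_{4,i}$, which gives $v_{i,0}[w_i]$ plus a combination of $[y_i^a]$ with $a\le k_i$, so the component is $0$. For $[y_i^rz^a]$ I apply $P_{1,i}$ repeatedly to get a polynomial in $y_i$ plus lower powers of $z$. For $[y_i^ry_j^s]$ with $i\ne j$ I apply $P_{2,ij}$ repeatedly. In both cases any power $y_i^{m}$ with $m>k_i$ is lowered by $P_{3,i}$, written as $y_i^{k_i+1}=\frac{1}{k_iv_{i,k_i}}\bigl(w_i-\sum_{a<k_i}av_{i,a}y_i^{a+1}\bigr)$, which again produces no $w_{N+1}$. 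The same argument gives $\{w_{N+1}\}([y_i^{k_i+1}])=0$. For $[y_i^rw_j]$ and $[w_iw_j]$ with $i\neq j$ I expand $w_i$ by $P_{3,i}$ and use $P_{2,ij}$, following Lemma~\ref{lemma_basis}. This reduces them to the already treated monomials, so these components vanish too.

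Next I treat the diagonal terms, where $P_5$ enters. Following Lemma~\ref{lemma_basis}, I multiply $P_5$ by $y_i$:
\[
[w_iy_i]=[z^{k_0-1}y_i]-\sum_a(a-1)v_{0,a}[z^{a-2}y_i]-\sum_{j\ne i}[w_jy_i]-[w_{N+1}y_i].
\]
The first three groups have zero component by the previous step. By Lemma~\ref{lemma: products with wN+1} the last term contributes $-\frac{1}{v_{N+1,0}-v_{i,0}}$. For higher $r$ I either multiply $P_5$ by $y_i^r$ directly, or induct using $[y_i^{r}w_i]=[y_i\cdot y_i^{r-1}w_i]$ together with $P_{1,i}$. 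Either way the only contribution again comes from $[y_i^rw_{N+1}]=\bigl[\frac{w_{N+1}}{(v_{N+1,0}-v_{i,0})^r}\bigr]$. Then $[w_i^2]=\sum_a av_{i,a}[y_i^{a+1}w_i]$ by $P_{3,i}$, which gives the stated sum. For $[y_i^{k_i+2}]=[y_i\cdot y_i^{k_i+1}]$ I substitute $P_{3,i}$. The terms $y_i^{a+2}$ with $a<k_i$ have zero component, and the $w_iy_i$ term yields $-\frac{1}{k_iv_{i,k_i}}\frac{1}{v_{N+1,0}-v_{i,0}}$.

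Finally, $[z^{k_0-1+r}]$ comes from $z^rP_5$. The term $z^rw_{N+1}$ gives $v_{N+1,0}^r$ by Lemma~\ref{lemma: products with wN+1}. The terms $z^rw_i$ are reduced with $P_{4,i}$ and contribute nothing. The terms $v_{0,a}z^{a-2+r}$ are the main obstacle. For $r\le1$ they are below degree $k_0-1$, so the formula is immediate. For larger $r$ they recurse and would seem to add corrections proportional to $v_{0,a}$. I would handle this by induction on $r$, checking carefully the sign convention of $P_5$ against the formula for $\partial\lambda/\partial z$. If needed I would restrict the claim to the range of $r$ actually used later, namely $r\le 1$ and whatever the computation of $F^o$ requires. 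I expect this bookkeeping to be the only delicate point; everything else is a mechanical consequence of Lemma~\ref{lemma: products with wN+1}.
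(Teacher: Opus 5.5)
Your reduction scheme is the paper's: reduce monomials of $\A_L$ with the generators of $\I$, track only the $[w_{N+1}]$-coefficient, and use Lemma~\ref{lemma: products with wN+1} whenever a product with $w_{N+1}$ appears. For every item except the identity for $[z^{k_0-1+r}]$, your steps match the paper's and are correct in the intended ranges: powers $y_i^r$ with $r\le k_i+1$ and powers $z^a$ with $a\le k_0-2$. One sentence needs tightening. Lowering $y_i^m$ by $P_{3,i}$ ``produces no $w_{N+1}$'' only when $m=k_i+1$. For $m=k_i+2$ it produces $w_iy_i$, and hence $w_{N+1}$ via $P_5$, as your own computation of $[y_i^{k_i+2}]$ shows.

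The step you leave open cannot be completed, because the first identity is false for $r\ge 2$. Write $P_5$ with the sign dictated by $\p\lambda/\p z$, namely $z^{k_0-1}+\sum_a(a-1)v_{0,a}z^{a-2}-\sum_{i=1}^{N+1}w_i$; the printed minus sign is a typo. Multiplying by $z^2$ gives
\[
[z^{k_0+1}]=[z^2w_{N+1}]+\sum_{i=1}^{N}[z^2w_i]-\sum_{a=1}^{k_0-1}(a-1)v_{0,a}[z^{a}].
\]
The summand $a=k_0-1$ is $-(k_0-2)v_{0,k_0-1}[z^{k_0-1}]$, whose $[w_{N+1}]$-component is $-(k_0-2)v_{0,k_0-1}$. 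Hence $\{w_{N+1}\}([z^{k_0+1}])=v_{N+1,0}^2-(k_0-2)v_{0,k_0-1}$. With the printed sign the correction is $+(k_0-2)v_{0,k_0-1}$, which is still nonzero.

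As a check, take $k_0=5$. Then $\{w_{N+1}\}([z^6])$ is the $\p_{t_{N+1,0}}$-component of $\p_{v_{0,4}}\circ\p_{v_{0,4}}$. Contracting the Hessian of the explicit $F^o$ of Theorem~\ref{theorem: expl form} with $\p_{v_{0,4}}=\p_{t_{0,4}}-2t_{0,4}\p_{t_{0,2}}-t_{0,3}\p_{t_{0,1}}$ gives $t_{N+1,0}^2-3t_{0,4}$, not $t_{N+1,0}^2$. The paper's proof makes exactly the slip you suspected: it equates $\{w_{N+1}\}([z^{k_0-1+r}])$ with $\{w_{N+1}\}([z^rw_{N+1}])$ and drops the recursive $v_{0,a}$-terms.

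Restricting to ``the range used later'' does not save the formula. The sector-$0$ argument in Section~\ref{section: expl form} needs every $r\le k_0-3$, so $r=2$ already occurs for $k_0\ge5$. What is true, and what that argument actually needs, is a structural statement. The quantity $\{w_{N+1}\}([z^{k_0-1+r}])$ is the coefficient of $z^{-r}$ in the expansion at $z=\infty$ of $z^{k_0}\big/\bigl((z-v_{N+1,0})\,\p_z\lambda\bigr)$. For $r\le k_0$ it depends only on $v_{0,\bullet}$ and $v_{N+1,0}$, so it coincides with the corresponding quantity for $\H_{0;\{k_0,k_{N+1}\}}$.

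A quick way to prove this, and all the other items with their exact ranges, is to note that on $M^L$ one has $\{w_{N+1}\}(X)=\eta(\p_{v_{N+1,1}},X)$. So each component equals $-\sum_{p\in\mathcal R}\res_{z=p}$ of $(X\cdot\lambda)\,dz\big/\bigl((z-v_{N+1,0})\,\p_z\lambda\bigr)$.
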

\begin{proof}
    By $P_5$ and $P_{4,N+1}$ we obtain $\{w_{N+1}\} ([z^{k_0-1+r}]) = \{w_{N+1}\} ([z^rw_{N+1}]) = v_{N+1,0}^r$. Then $P_{3,i}w_j = w_iw_j - \sum_{a=1}^{k_i} av_{i,a}y_i^{a+1}w_j$, and applying the same argument as in the proof of Lemma \ref{lemma_basis} we obtain $\{w_{N+1}\} ([w_i w_j]) = 0$. The last statement of the first line follows from the previous equality and from $P_{4,i}$.

    From $P_{3,j}y^r_i = y_i^rw_j - \sum_{a=1}^{k_i}y_{i}^{a+1}y_j^r$ and from $P_{2,ij}$ we conclude $\{w_{N+1}\} ( [y_i^r w_j]) = 0$ and $\{w_{N+1}\} ( [y_i^r y_j^s]) = 0$. The last statement of the second line follows from $P_{1,i}$.

    We have 
\[P_{5,i}y_i^r = z^{k_0-1}y_i^r - \sum_{a=1}^{k_0-1} (a-1) v_{0,a} z^{a-2}y_i^r - \sum_{b=1}^{N+1} w_by_i^r.
\]
Applying the previous line, we conclude $\{w_{N+1}\} ([y_i^rw_i]) = \{w_{N+1}\}([-w_{N+1}y_i^r]).$ Then $P_{3,N+1}y_i^r = w_{N+1}y_i^r + \sum_{a=1}^{k_{N+1}}av_{N+1,a}y_{N+1}^{a+1}y_i^r$, what gives us $\{w_{N+1}\}([-w_{N+1}y_i^r]) = \{w_{N+1}\}([\sum_{a=1}^{k_{N+1}}av_{N+1,a}y_{N+1}^{a+1}y_i^r])$. If $a < k_{N+1}$ we employ $P_{2,N+1;i}$ and these summands give a zero contribution, and for $a= k_{N+1}$ we apply the same expansion as in Step 2 of Lemma \ref{lemma: products with wN+1} what proves the claim. The last statement of the third line follows from $P_{3,i}$ applying the previous equality.

The last line follows from $P_{3,i}$ and $P_{3,i}y_i$ combining with the previous line.
    
\end{proof}

\begin{corollary}\label{corollary: v_{N+1,0} sc}
   For any $1 \leq i,j \leq N$, $i\neq j$ and arbitrary $\alpha, \beta$ the product of $\H_{0;L}$ reads 
   \[
   \frac{\d}{\d v_{i,\alpha}}\circ_{L} \frac{\d}{\d v_{j,\beta}} = \left.\frac{\d^2\lambda(z)}{\d v_{i,\alpha}\d v_{j,\beta}}\right|_{z=v_{N+1,0}}\frac{\d}{\d v_{N+1,0}} + \text{other terms}.
   \]
\end{corollary}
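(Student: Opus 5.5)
The plan is to compute the $[w_{N+1}]$-component of the product $\frac{\p}{\p v_{i,\alpha}}\circ_L\frac{\p}{\p v_{j,\beta}}$ inside the algebra $\A_L$ of Proposition~\ref{prop: SpecAn}, and compare it with $\frac{\p\lambda}{\p v_{i,\alpha}}\frac{\p\lambda}{\p v_{j,\beta}}$ evaluated at $z=v_{N+1,0}$. The key observation is that for $i\neq j$ the mixed second derivative $\frac{\p^2\lambda}{\p v_{i,\alpha}\p v_{j,\beta}}$ vanishes identically, because $\lambda$ is a sum of sector-wise terms. So the claim reduces to showing $\{w_{N+1}\}\big(\Psi^{-1}(\frac{\p}{\p v_{i,\alpha}})\,\Psi^{-1}(\frac{\p}{\p v_{j,\beta}})\big)=0$ for $1\le i\neq j\le N$.

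\textbf{Step 1.} Express the sector-$i$ vector fields as elements of $\A_L$. Proposition~\ref{prop: product via lambda}, together with the relations $P_{1,i},P_{3,i}$, identifies $\frac{\p}{\p v_{i,a}}$ for $a\ge1$ with $[y_i^a]$, since $\frac{\p\lambda}{\p v_{i,a}}=(z-v_{i,0})^{-a}$ and $(z-v_{i,0})^{-1}\mapsto y_i$. It also identifies $\frac{\p}{\p v_{i,0}}$ with $[w_i]$. So every $\frac{\p}{\p v_{i,\alpha}}$ with $i\ge1$ corresponds to an element of the form $y_i^\alpha$ (for $\alpha\ge1$) or $w_i$ (for $\alpha=0$).

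\textbf{Step 2.} Case split on $(\alpha,\beta)$:
\begin{itemize}
\item both positive, giving the product $[y_i^\alpha y_j^\beta]$;
\item one of them zero, giving $[y_i^\alpha w_j]$ or $[w_iy_j^\beta]$;
\item both zero, giving $[w_iw_j]$.
\end{itemize}
Proposition~\ref{prop: more products}, lines \eqref{eq: more products l1}--\eqref{eq: more products l2}, states that the $\{w_{N+1}\}$-component of each of these is $0$. This matches $\left.\frac{\p^2\lambda}{\p v_{i,\alpha}\p v_{j,\beta}}\right|_{z=v_{N+1,0}}=0$.

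\textbf{Step 3.} Translate the $\{w_{N+1}\}$-component into the coefficient of $\frac{\p}{\p v_{N+1,0}}$ via $\Psi(w_{N+1})=\frac{\p}{\p v_{N+1,0}}$. This is legitimate because the basis \eqref{eq: AK basis} for $L$ contains $[w_{N+1}]$ as a separate basis element, corresponding to the coordinate vector field $\frac{\p}{\p v_{N+1,0}}$.

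The main obstacle is bookkeeping rather than analysis. One has to check that the cases $[y_i^r w_j]$ and $[y_i^ry_j^s]$ with $r,s$ arbitrary are genuinely covered, including large powers that must first be reduced via $P_{3,i}$. The proof of Proposition~\ref{prop: more products} handles these through $P_{2,ij}$, which never produces a $w_{N+1}$ term. The statement also says ``arbitrary $\alpha,\beta$'' while the index $j$ ranges over $1\le j\le N$, so the case $j=0$ is excluded and needs no treatment. If the corollary was instead intended more generally, including $i=j$, I would compare the remaining lines of Proposition~\ref{prop: more products} with $\frac{\p^2\lambda}{\p v_{i,\alpha}\p v_{i,\beta}}$ at $z=v_{N+1,0}$, up to the sign convention.
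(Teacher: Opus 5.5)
Your proposal is correct and is essentially the paper's proof: identify $\frac{\p}{\p v_{i,\alpha}}$ with $[y_i^\alpha]$ or $[w_i]$ via $\Psi$, read off from lines \eqref{eq: more products l1}--\eqref{eq: more products l2} of Proposition~\ref{prop: more products} that the $[w_{N+1}]$-component vanishes for $i\neq j$, and match this with $\frac{\p^2\lambda}{\p v_{i,\alpha}\p v_{j,\beta}}\equiv 0$. Two of your side remarks are wrong, though neither affects the argument: the vanishing genuinely uses $\alpha\le k_i$, $\beta\le k_j$ (arbitrary exponents are not harmless, since $[y_i^{k_i+2}]$ already has a nonzero $[w_{N+1}]$-component by the last line of that proposition), and your suggested extension to $i=j$ would fail, e.g.\ on $M^{\res}$ one has $\{w_{N+1}\}([y_iw_i])=-(v_{N+1,0}-v_{i,0})^{-1}$, whereas $\left.\frac{\p^2\lambda}{\p v_{i,1}\p v_{i,0}}\right|_{z=v_{N+1,0}}=(v_{N+1,0}-v_{i,0})^{-2}$.
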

\begin{proof}
    This follows from the proposition above and the isomorphism $\Psi$ of Proposition~\ref{prop: SpecAn}. One notes that the structure constants of the right hand sides of the equalities coincide with the corresponding second order derivatives of $\lambda(z)\mid_{z=v_{N+1,0}}$.
\end{proof}

    Recall that $\widetilde{\lambda}(z,\bt) := \left.\lambda(z,\bt) \right|_{t_{N+1,a} = 0, \ a \ge 1}$.
        This follows from Proposition~\ref{prop: more products}, Corollary~\ref{corollary: v_{N+1,0} sc} and the equality $t_{N+1,0} = v_{N+1,0}$ that 
        \[
            \frac{\p^2 F^o}{\p t_{i,\alpha} \p t_{N+1,0}} = \left.\frac{\p \widetilde{\lambda}}{\p t_{i,\alpha}} \right|_{z = t_{N+1,0}}.
        \]
This means that $F^o = \int \widetilde{\lambda} dz \mid_{z = t_{N+1,0}} + A$, for some function $A$ depending on $t_{i,\alpha}$ with $i=0,1,\dots,N$ and independent of $t_{N+1,0}$. In order to prove the theorem we have to find $A$ explicitly.

This follows from Remark~\ref{remark: flat coordinates are sector-wise} and from Eq.~\eqref{eq: more products l2} that $\frac{\p^2 A}{\p t_{i,\alpha} \p t_{j,\beta}} = 0$ for $i \neq j$. 

If $i=j=0$, then from Remark~\ref{remark: flat coordinates are sector-wise} and Proposition~\ref{prop: more products} the second order derivatives in question coincide with those for $F_{A_{k_0-1}}^o$. Namely, we have
\[
\frac{\p^2 A}{\p t_{0,\alpha} \p t_{0,\beta}}= \left.\frac{\p^2 F^o}{\p t_{0,\alpha} \p t_{0,\beta}}\right|_{t_{N+1,0}=0} = \left.\frac{\p^2 F_{A_{k_0-1}}^o}{\p t_{0,\alpha} \p t_{0,\beta}}\right|_{t_{N+1,0}=0}  \]
Here the first equality holds since we take the derivation by the variables of the $0$--th sector. The second equality follows from Eq.~\eqref{eq: more products l1}, and from the fact that the formulae for flat coordinates $t_{0, \alpha}$ are the same for both $\H_{0;L}$ and $\H_{0; \{ k_0 \}}$.  

Now we show that $\frac{\p^2 A}{\p t_{i,\alpha} \p t_{i,\beta}} = 0$ for $i=j \ge 1$, what completes the proof. We have from open WDVV and part (ii) of Theorem~\ref{theorem: main} that 
        \[
            \left.\frac{\p^2 F^o}{\p t_{i,\alpha} \p t_{i,\beta}} \frac{\p \widetilde{\lambda}}{ \p z}\right|_{z = t_{N+1,0}} 
            = 
            \left.\frac{\p \widetilde{\lambda}}{\p t_{i,\alpha}} \frac{\p \widetilde{\lambda}}{\p t_{i,\beta}}\right|_{z = t_{N+1,0}} 
            - \left.\sum_{l, \gamma} c_{i,\alpha; i,\beta}^{l,\gamma} \frac{\p \widetilde{\lambda}}{\p t_{l,\gamma}} \right|_{z = t_{N+1,0}}.
        \]
where $ c_{i,\alpha; i,\beta}^{l,\gamma} = \sum_{j,\delta} \frac{\p^3F^{K}}{\p t_{i,\alpha } \p t_{i,\beta} \p t_{j,\delta}}\eta^{j,\delta;l,\gamma}$.

 Since  $\frac{\p \widetilde \lambda}{\p z} \not\equiv 0$  we conclude that
        \[
        \frac{\p^2 A}{\p t_{i,\alpha} \p t_{i,\beta}} = \res_{z = t_{i,0}} \left(\frac{\p \widetilde{\lambda}}{\p t_{i,\alpha}} \frac{\p \widetilde{\lambda}}{\p t_{i,\beta}}
            - \sum_{l,\gamma} c_{i,\alpha; i,\beta}^{l,\gamma} \frac{\p \widetilde{\lambda}}{\p t_{l,\gamma}} \right) \frac{dz}{(z - t_{i,0}) \cdot \frac{\p \widetilde{\lambda}}{\p z}} .
        \]
        However this follows from the definition of the Hurwitz--Frobenius manifold product that 
        \[
            \frac{\p \widetilde{\lambda}}{\p t_{i,\alpha}} \frac{\p \widetilde{\lambda}}{\p t_{i,\beta}} 
            - \sum_{l,\gamma} c_{i,\alpha; i,\beta}^{l,\gamma} \frac{\p \widetilde{\lambda}}{\p t_{l,\gamma}} = \frac{\p \widetilde{\lambda}}{\p z} \cdot \phi(z)
        \]
        for some function $\phi$ rational in $z$. One gets immediately from Proposition~\ref{prop: SpecAn} and Remark~\ref{remark: flat coordinates are sector-wise} that it has no constant term in the Laurent series expansion around $t_{i,0}$.

    This completed the proof.
    
\section{Examples}\label{section: examples}
We give several examples of the open WDVV solutions $(F^c,F^o)$ constructed above.

\subsection{Example 1}
Let $K = \lbrace 4,3 \rbrace$. We have
\begin{align*}
\F^{\lbrace 4, 3 \rbrace } &= \frac{1}{2} t_{0,1}^2 t_{0,3} + t_{0,1} \left( \frac{1}{2} t_{0,2}^2 + t_{1,0} t_{1,1} + \frac{1}{3} t_{1,2} t_{1,3} \right) + \frac{t_{0,3}^5}{60} - \frac{1}{4} t_{0,2}^2 t_{0,3}^2 \\
&\quad + t_{0,2} \left( \frac{1}{54} t_{1,3}^3 + \frac{1}{3} t_{1,0} t_{1,2} t_{1,3} + \frac{1}{2} t_{1,0}^2 t_{1,1} + t_{0,3} t_{1,1} \right) \\
&\quad + \left( \frac{1}{2} t_{1,0} t_{1,1} + \frac{1}{6} t_{1,2} t_{1,3} \right) t_{0,3}^2 + \left( \frac{1}{27} t_{1,0} t_{1,3}^3 + \frac{1}{3} t_{1,0}^3 t_{1,1} + \frac{1}{3} t_{1,0}^2 t_{1,2} t_{1,3} \right) t_{0,3} \\
&\quad + \frac{1}{54} t_{1,0}^3 t_{1,3}^3 + \frac{1}{20} t_{1,0}^5 t_{1,1} + \frac{1}{12} t_{1,0}^4 t_{1,2} t_{1,3} \\
&\quad + \frac{t_{1,1} t_{1,2}^2}{2 t_{1,3}} - \frac{t_{1,2}^4}{12 t_{1,3}^2} + \frac{1}{2} t_{1,1}^2 \log(t_{1,3})
\end{align*}

Then the open WDVV solution is given by 
\begin{align*}
F^c &= \frac {1}{2} t_{0,1}^2t_{0,3}+\frac{1}{2} t_{0,1} t_{0,2}^2  - \frac{1}{4} t_{0,2}^2 t_{0,3}^2 + \frac {t_ {0, 3}^5}{60},
\end{align*}
that is exactly the $A_3$ Dubrovin--Frobenius manifold potential
and
\begin{align*}
F^o & = \frac{t_{1,0}^5}{20} +\frac{1}{3} t_{0,3} t_{1,0}^3 +\frac{1}{2} t_{0,2} t_{1,0}^2 
+ \left(t_{0,1} + \frac{1}{2} t_{0,3}^2\right) t_{1,0} + t_{0,2} t_{0,3} .
\end{align*}
Compare this to the examples of \cite{BB21} Section~4.

\subsection{Example 2}
Let $K = \lbrace 4,2,2 \rbrace $. We have
\begin{align*}
\F^{\{4, 2, 2\}} &= \frac{1}{2} t_{0,1}^2 t_{0,3} + t_{0,1} \left[ \frac{1}{2} t_{0,2}^2 + \sum_{i=1}^2 \left( t_{i,0} t_{i,1} + \frac{1}{4} t_{i,2}^2 \right) \right] + \frac{t_{0,3}^5}{60} - \frac{1}{4} t_{0,2}^2 t_{0,3}^2 \\
&\quad + \sum_{i=1}^2 \Bigg[ t_{0,2} \left( \frac{1}{4} t_{i,0} t_{i,2}^2 + \frac{1}{2} t_{i,0}^2 t_{i,1} + t_{0,3} t_{i,1} \right)
+ \left( \frac{1}{2} t_{i,0} t_{i,1} + \frac{1}{8} t_{i,2}^2 \right) t_{0,3}^2
\\
&\quad \qquad + \left( \frac{1}{3} t_{i,0}^3 t_{i,1} + \frac{1}{4} t_{i,0}^2 t_{i,2}^2 \right) t_{0,3} + \frac{1}{20} t_{i,0}^5 t_{i,1} + \frac{1}{16} t_{i,0}^4 t_{i,2}^2 \Bigg] 
\\
&\quad + \frac{t_{1,2}^2 t_{2,1} - t_{1,1} t_{2,2}^2}{4(t_{1,0} - t_{2,0})} + \frac{t_{1,2}^2 t_{2,2}^2}{16(t_{1,0} - t_{2,0})^2} \\
&\quad + \sum_{i=1}^2 \frac{t_{i,1}^2}{2}  \log(t_{i,2}) + t_{1,1} t_{2,1} \log(t_{2,0} - t_{1,0}).
\end{align*}

Then the open WDVV solution is given by $F^c$ 
\begin{align*}
F^c &= \frac{1}{2} t_{0,1}^2 t_{0,3} + t_{0,1} \left( \frac{1}{2} t_{0,2}^2 + t_{1,0} t_{1,1} + \frac{1}{4} t_{1,2}^2 \right) + \frac{t_{0,3}^5}{60} - \frac{1}{4} t_{0,2}^2 t_{0,3}^2 \\
&\quad + t_{0,2} \left( \frac{1}{4} t_{1,0} t_{1,2}^2 + \frac{1}{2} t_{1,0}^2 t_{1,1} + t_{0,3} t_{1,1} \right) \\
&\quad + \left( \frac{1}{2} t_{1,0} t_{1,1} + \frac{1}{8} t_{1,2}^2 \right) t_{0,3}^2 + \left( \frac{1}{3} t_{1,0}^3 t_{1,1} + \frac{1}{4} t_{1,0}^2 t_{1,2}^2 \right) t_{0,3} \\
&\quad + \frac{1}{20} t_{1,0}^5 t_{1,1} + \frac{1}{16} t_{1,0}^4 t_{1,2}^2 
+ \frac{1}{2} t_{1,1}^2 \log(t_{1,2})
\end{align*}
and
\begin{align*}
F^o &= t_{0,1} t_{2,0} + t_{0,2} t_{0,3} 
+ \frac{1}{2} t_{0,3}^2 t_{2,0} + \frac{1}{2} t_{0,2} t_{2,0}^2 + \frac{1}{3} t_{0,3} t_{2,0}^3 + \frac{1}{20} t_{2,0}^5 
\\ 
&\quad 
+ \frac{t_{1,2}^2}{4(t_{1,0} - t_{2,0})} + t_{1,1} \log(t_{2,0} - t_{1,0}).
\end{align*}

\subsection{Example 3}
Let $K = \lbrace 4,3,3 \rbrace$. We have
\begin{align*}
\F^{\lbrace 4,3,3 \rbrace} &= \frac{1}{2} t_{0,1}^2 t_{0,3} + t_{0,1} \left[ \frac{1}{2} t_{0,2}^2 + \sum_{i=1}^2 \left( t_{i,0} t_{i,1} + \frac{1}{3} t_{i,2} t_{i,3} \right) \right] + \frac{t_{0,3}^5}{60} - \frac{1}{4} t_{0,2}^2 t_{0,3}^2 \\
&\quad + \sum_{i=1}^2 \Bigg[ t_{0,2} \left( \frac{1}{54} t_{i,3}^3 + \frac{1}{3} t_{i,0} t_{i,2} t_{i,3} + \frac{1}{2} t_{i,0}^2 t_{i,1} + t_{0,3} t_{i,1} \right) \\
&\quad \qquad \quad + \left( \frac{1}{2} t_{i,0} t_{i,1} + \frac{1}{6} t_{i,2} t_{i,3} \right) t_{0,3}^2 + \left( \frac{1}{27} t_{i,0} t_{i,3}^3 + \frac{1}{3} t_{i,0}^3 t_{i,1} + \frac{1}{3} t_{i,0}^2 t_{i,2} t_{i,3} \right) t_{0,3} \\
&\quad \qquad \quad + \frac{1}{54} t_{i,0}^3 t_{i,3}^3 + \frac{1}{20} t_{i,0}^5 t_{i,1} + \frac{1}{12} t_{i,0}^4 t_{i,2} t_{i,3} \Bigg] \\
&\quad + \sum_{i=1}^2 \frac{t_{i,1} t_{i,2}^2}{2 t_{i,3}} + \frac{t_{1,2} t_{1,3} t_{2,1} - t_{1,1} t_{2,2} t_{2,3}}{3(t_{1,0} - t_{2,0})} \\
&\quad - \sum_{i=1}^2 \frac{t_{i,2}^4}{12 t_{i,3}^2} + \frac{6 t_{1,2} t_{1,3} t_{2,2} t_{2,3} - t_{1,1} t_{2,3}^3 - t_{1,3}^3 t_{2,1}}{54(t_{1,0} - t_{2,0})^2} \\
&\quad + \frac{t_{1,2} t_{1,3} t_{2,3}^3 - t_{1,3}^3 t_{2,2} t_{2,3}}{81(t_{1,0} - t_{2,0})^3} - \frac{t_{1,3}^3 t_{2,3}^3}{486(t_{1,0} - t_{2,0})^4} \\
&\quad + \sum_{i=1}^2 \frac{1}{2} t_{i,1}^2 \log(t_{i,3}) + t_{1,1} t_{2,1} \log(t_{2,0} - t_{1,0}).
\end{align*}

Then the open WDVV solution is given by $F^c = \F^{\lbrace 4, 3 \rbrace }$ as in the previous section and
\begin{align*}
F^o &= t_{0,1} t_{2,0} + t_{0,2} t_{0,3} + \frac{1}{2} t_{0,3}^2 t_{2,0} + \frac{1}{2} t_{0,2} t_{2,0}^2 + \frac{1}{3} t_{0,3} t_{2,0}^3 + \frac{1}{20} t_{2,0}^5 
\\
&\quad + \frac{t_{1,2} t_{1,3}}{3(t_{1,0} - t_{2,0})} - \frac{t_{1,3}^3}{54(t_{1,0} - t_{2,0})^2} + t_{1,1} \log(t_{2,0} - t_{1,0})
\end{align*}

\section{Open potentials of type A and D}

The Dubrovin--Frobenius potentials $F_{A_n}$ and $F_{D_n}$ were first derived by B. Dubrovin via the geometry of the respective Coxeter groups (cf. \cite{D1}). 

We briefly recall the Dubrovin--Saito construction of a Frobenius manifold structure on the parameter space of a universal unfolding of $A$ and $D$ type singularities (see~\cite{ST}, \cite{BB21}). The {\it universal unfoldings} of the singularities $W=A_n$ and $W=D_n$ are the functions ${ \Lambda_{W}\colon\CC^{2}\times\CC^{n} \to \CC}$
\begin{align*}
    \Lambda_{A_n} &= \frac{x^{n+1}}{n+1} + y^2+ \sum_{k=1}^n s_k x^{k-1},\\
  \Lambda_{D_n} &= \frac{x^{n-1}}{n-1} + \frac{1}{2}xy^2 + \sum _{k=1}^{n-1} s_k x^{k-1} + s_{n}y.
\end{align*}
Consider the quotient ring
$$
\A_W:=\CC[x,y,s_1,\dots,s_n]/\left(\p_x\Lambda_W,\p_y\Lambda_W\right).
$$
As a $\mbC[s_1,\ldots,s_n]$-module, the space $\A_W$ has dimension $n$. Identifying $\mcT_{\mbC^n}$ and $\A_W$ via the isomorphism $\Phi_W$ defined by
\begin{gather*}
\Phi_W\left(\frac{\p}{\p s_k}\right) := \left[ \frac{\p \Lambda_W}{\p s_k}\right], \quad 1 \le k \le n,
\end{gather*}
we endow the tangent spaces $T_s \mbC^n$ with the multiplication. The structure constants $(c_s)_{i,j}^k$ of it are polynomials in the coordinates $s_1,\ldots,s_N$.  Define the bilinear form $\eta_W$ on~$\mbC^n$ by
$$
\eta_A\left(\frac{\d}{\d s_i},\frac{\d}{\d s_j}\right):=(c_s)^n_{i,j},
\quad
\eta_D\left(\frac{\d}{\d s_i},\frac{\d}{\d s_j}\right):=(c_s)^{n-1}_{i,j}
$$ 
This bilinear form together with the multiplication, introduced above, define a Dubrovin--Frobenius manifold structure on $\mbC^n$, often called the Saito Frobenius manifold. We denote by $F_{W}$ the potential of this manifold.

\subsection{$A_n$ and $D_n$ flat coordinates.} In \cite{NY} the authors gave the formulae for the flat coordinates of Dubrovin-Frobenius manifolds of $A_n$ and $D_n$ type via the unfolding coordinates $s_1,\dots,s_n$ of the corresponding singularities. 

For $A_n$ case we have 
\begin{align}\label{eq: A_n flat coordinates}
    t_\gamma = \sum_{\substack{\alpha_1,\dots,\alpha_n \ge 0 \\ \sum_{k=1}^{n} (n+2-k) \alpha_k = n+2 - \gamma}} \left(-1\right)^{|\alpha|-1} \prod_{k=0}^{|\alpha|-2}(\gamma + k(n+1)) \prod _{k=1}^{n} \frac{s_k^{\alpha _k}}{\alpha _k!},
\end{align}
where $|\alpha| = \sum_{k=1}^{n} \alpha_k$.

\begin{remark}\label{remark: 0-sector coordinates}
    The expressions $t_\gamma = t_\gamma(s_\bullet)$ above coincide with the expressions of the flat coordinates $t_{0,1},\dots,t_{0,n}$ of Hurwitz-Frobenius manifold $\H_{0;\{n+1,k_1,\dots,k_N\}}$ as the functions of $v_{0,1},\dots,v_{0,n}$ (see \cite{D2}, Exercise 5.3.) after the substitution $s_k \mapsto v_{0,k}$.
\end{remark}

For $D_n$ case we have 
\begin{align}
    t_{\gamma} &= \sum_{\substack{\alpha_1,\dots,\alpha_{n-1} \ge 0 \\ \sum_{k=1}^{n-1} (n-k)\alpha_k = n- \gamma}} \left(-1\right)^{|\alpha|-1} \prod _{k=0}^{|\alpha|-2} (2 \gamma -1 +2 k (n-1)) \prod_{k=1}^{n-1} \frac{s_k^{\alpha_k}}{\alpha_k!}, \quad 1 \le \gamma \le n-1,
    \\
    t_{n} &= s_n.
\end{align}
where $|\alpha| = \sum_{k=1}^{n-1} \alpha_k$.

\subsection{$A_n$ open potential}
The genus zero $A_n$ open potential $F^o_{A_n}$ was first found in \cite{BCT19}. It is a polynomial in $t_1,\dots,t_n$ and $t_0$ defined by
\begin{equation*}
    F^o_{A_n} := \frac{t_0^{n+2}}{(n+1)(n+2)}
    + \sum_{b=0}^n \sum_{\substack{\alpha_1,\dots,\alpha_n \ge 0 \\ \sum_{k=1}^{n} (n+2-k) \alpha_k = n+2 - b}} (|\alpha|+b-2)! \prod _{k=1}^{n} \frac{t_k^{\alpha _k}}{\alpha_k!} \cdot \frac{t_0^{b}}{b!}.
\end{equation*}
It is also connected to the $A_n$ unfolding coordinates $s^{A}_\bullet$ by the following identity (see \cite{Bu2})
\begin{equation}\label{eq: open potential derivative}
    \frac{\p F^o_{A_n}}{\p t_0} = \frac{t_0^{n+1}}{n+1} + \sum_{k=1}^n t_0^{k-1} s^{A}_k(t_1,\dots,t_N).
\end{equation}

\subsection{$D_n$ open potential}
The genus zero $D_n$ open potential $F^o_{D_n}$ is a polynomial in $t_1,\dots,t_n$ and Laurent polynomial in $t_0$ defined by (cf. \cite[Section 5.2]{BB21})
\begin{gather}\label{def: D_n open potential}
F^o_{D_n}:=\left.\left(\sum_{k=1}^{n-1} \frac{s_k s_{n+1}^{2 k-1}}{2^{k-1}(2 k-1)} + \frac{s_{n+1}^{2 n-1}}{2^{n-2}(2 n-1) (2n-2)} + \frac{s_n^2}{2 s_{n+1}}\right)\right|_{s_i=s_i(t^*)}.
\end{gather}
where the substitution of variables is given by the formulas
\begin{align}
&t_\gamma(s_1,\ldots,s_n) = \hspace{-0.2cm}\sum_{\substack{\alpha_1,\ldots,\alpha_{n-1}\ge 0\\ \sum (n-i)\alpha_i = n-\gamma}}\hspace{-0.1cm} \left(-\frac{1}{2}\right)^{|\alpha|-1} \prod_{k=0}^{|\alpha|-2} \left(2\gamma-1 + 2k(n-1) \right) \frac{\prod s_i^{\alpha_i}}{\prod \alpha_i!}, \quad 1\le\gamma \le n-1,\label{eq:Dn flat coordinates}\\
&t_n(s_1,\ldots,s_n)=s_n,\notag
\end{align}
where $|\alpha| = \sum_{k=1}^{n-1} \alpha_k$.

It is immediate to see that the variable $t_n$ plays the special role in $F^o_{D_n}$. In particular, the only non--polynomial summand of $F^o_{D_n}$ is at the same time the only appearance of $t_n$ in the open potential. This variable is also special for $F_{D_n}$. In particular, we have (see \cite[Section 3.3]{BDbN})
\begin{equation}\label{eq: DN^c expression}
    F_{D_n} = \frac{1}{2}t_1^2t_{n-1} +\frac{1}{2}t_1 t_{n}^2 + \frac{1}{2} t_1 \sum_{\alpha=2}^{n-1} t_\alpha t_{n-\alpha} + \phi(t_2,\dots,t_{n-1}) + s_1^{\mathrm{D}} \cdot \frac{t_n^2}{2},
\end{equation}
for some polynomial $\phi$ that does not depend on $t_n$, and with the unfolding coordinates $s^{\mathrm{D}}_\bullet$.

\section{Proof of Theorem~\ref{theorem: A and D}}
In this section we prove Theorem \ref{theorem: A and D}. Namely, we show that our construction gives the open extensions of A and D type singularities, originally obtained in \cite{BB21}.

\subsection{Type A}
In \cite{BB21} the authors proved that if $F^o$ is a polynomial solution of open WDVV equations for $F^c=\F^{\{n+1\}} = F_{A_n}$ and satisfies Proposition \ref{prop: quasihomogenity}, then $F^o=F^o_{A_n}$. By Theorem \ref{theorem: expl form}, $F^o$ from part (ii) of Theorem \ref{theorem: main} is polynomial for $K=\{n+1\}$. We have also shown, that it satisfies Proposition \ref{prop: quasihomogenity}, then Theorem \ref{theorem: A and D} holds for the $A$ type singularity. In particular, \[
F^o_{A_n} = \left.\dfrac{\p \F^{\{n+1,m\}}}{\p t_{1, 1}} \right|_{t_{1,1} = \dots =t_{1,m}=0}.
\]

\subsection{Type D}\label{section: D as HFM}
First we are going to recover $D_n$ Saito--Frobenius manifold structure in the context of Hurwitz--Frobenius manifolds. This is a folklore know for the specialists but we do not know any reference to cite except \cite{EY97} that seems to skip the proof. 

Consider the space $\H^{even}_{0; \{2n,2\} }$ of meromorphic functions on $\CC$ of the form 
\[
\lambda(z) := \frac{z^{2n}}{n} + \sum_{\alpha=1}^{n-1}v_{\alpha}z^{2(\alpha-1)} - \frac{v_n^2}{2z^2}
\]
with $v_n \neq 0$. Endow this space with the multiplication $\circ$ and pairing $\eta$ as in Section \ref{section: HFM}. Note that we make the specific choice of the scaling factors in $\lambda$ because it will be useful in the further statements.

Now we show, that $\H^{even}_{0; \{2n,2\}}$ has the structure of Dubrovin-Frobenius manifold. 

\begin{proposition}\label{prop: AnSpec for D_n}
    $(\T_{\H^{even}_{2n,2}}, \circ)$ is isomorphic to $\A_{D_n}$ by the map
\[
        \Psi: x \mapsto \frac{\p }{\p v_{2}}, \ y \mapsto \frac{\p }{\p v_{0}}, \ s_\alpha \mapsto v_\alpha.
\]
\end{proposition}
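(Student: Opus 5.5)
The plan is to mimic the proof of Proposition~\ref{prop: SpecAn}. The ramification set is now $\mathcal{R}=\{\infty,0\}$. First I would record the directional derivatives
\[
\frac{\p\lambda}{\p v_\alpha}=z^{2(\alpha-1)},\quad 1\le\alpha\le n-1,\qquad \frac{\p\lambda}{\p v_n}=-\frac{v_n}{z^2}.
\]
So the images of $x$ and $y$ should be the vector fields whose action on $\lambda$ is $X:=z^2$ and $Y:=-v_n/z^2$ respectively. Here $y$ is read as $\p/\p v_n$; the printed ``$v_0$'' I take to be a misprint. The map $s_\alpha\mapsto v_\alpha$ identifies the coefficient rings.

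Next I would check that the two generators of the ideal $(\p_x\Lambda_{D_n},\p_y\Lambda_{D_n})$ are sent to zero, using Proposition~\ref{prop: product via lambda}. A key simplification is the residue hypothesis there. Any rational $\phi$ whose only poles lie in $\{0,\infty\}=\mathcal{R}$ automatically has $\sum_{p\in\mathcal R}\res_{z=p}\phi\,dz=0$, by the residue theorem on $\PP^1$. So it suffices to exhibit the required factorisations with such a $\phi$.

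For $\p_y\Lambda_{D_n}=xy+s_n$, we have $XY+v_n=0$ identically. This gives $\frac{\p}{\p v_2}\circ\frac{\p}{\p v_n}=-v_n\frac{\p}{\p v_1}$ with $\phi=0$, and the first generator follows.

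For $\p_x\Lambda_{D_n}=x^{n-2}+\tfrac12y^2+\sum_k(k-1)s_kx^{k-2}$, I would compare with
\[
\frac{\p\lambda}{\p z}=\frac{2}{z}\Big(z^{2(n-1)}+\sum_\alpha(\alpha-1)v_\alpha z^{2(\alpha-1)}+\frac{v_n^2}{2z^2}\Big).
\]
The leading exponent must be matched with $x^{n-1}/(n-1)$ in $\Lambda_{D_n}$; I expect the normalisation of the top term of $\lambda$ to have to be read as $z^{2(n-1)}/(n-1)$ for the degrees to agree. The bracket equals $X\cdot\big(X^{n-2}+\tfrac12Y^2+\sum(k-1)v_kX^{k-2}\big)$. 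Hence
\[
X^{n-2}+\tfrac12 Y^2+\sum(k-1)v_kX^{k-2}=\frac{\p\lambda}{\p z}\cdot\frac{z}{2X}=\frac{\p\lambda}{\p z}\cdot\frac{1}{2z},
\]
and $\phi=1/(2z)$ has poles only in $\mathcal R$. The products $X^a$ with $a\ge 2$, as well as $Y^2$, are realised as combinations of the $\p\lambda/\p v_\bullet$ by iterating Proposition~\ref{prop: product via lambda} in the same way. This shows that $\Psi$ is a well-defined algebra homomorphism $\A_{D_n}\otimes\O\to(\T,\circ)$.

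Finally, for bijectivity I would show that $\A_{D_n}$ is free over $\CC[s]$ with basis $[1],[x],\dots,[x^{n-2}],[y]$. Indeed, $xy=-s_n$ and the $\p_x$ relation eliminate $y^2$ and $x^{n-1}$, exactly as in the basis argument of Proposition~\ref{prop: SpecAn}. This basis is mapped to the frame $\p/\p v_1,\dots,\p/\p v_{n-1},\p/\p v_n$, so $\Psi$ is an isomorphism.

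The main obstacle is bookkeeping in the $\p_x$ relation: getting the exponents and scaling constants of $\lambda$ (the leading term and the $-\tfrac12v_n^2z^{-2}$) consistent with $\Lambda_{D_n}$, including the possible index shift in $n$. Conceptually, everything else is immediate from Proposition~\ref{prop: product via lambda} together with the residue theorem.
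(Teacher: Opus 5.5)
Your proposal is correct and follows the paper's own proof. Both arguments show, via Proposition~\ref{prop: product via lambda}, that $\Psi$ kills $\p_y\Lambda_{D_n}$ (since $z^2\cdot(-v_n/z^2)=-v_n$, so $\phi=0$) and $\p_x\Lambda_{D_n}$ (via $\frac{1}{2z}\frac{\p\lambda}{\p z}$, where $\phi$ is a multiple of $1/z$ whose residues at $0$ and $\infty$ cancel). You also correctly repair two misprints. The map should read $y\mapsto \p/\p v_n$. The leading term of $\lambda$ should be $z^{2(n-1)}/(n-1)$: with $z^{2n}/n$ the top term of $\frac{1}{z}\frac{\p\lambda}{\p z}$ corresponds to $x^{n-1}$ rather than $x^{n-2}$, even in the paper's own computation. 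Finally, your basis argument for bijectivity supplies a step the paper leaves implicit.
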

\begin{proof}
    It's enough to show that the relations of $\A_{D_n}$ hold true after the map $\Psi$. 
    We have
    \[
    \d_x \Lambda_{D_n} = x^{n-2} + \frac{1}{2}y^2 + \sum_{\alpha=1}^{n-1}(\alpha-1)s_\alpha x^{\alpha-2}, \quad \d_y \Lambda_{D_n} = xy + s_n.
    \]
We use Proposition \ref{prop: product via lambda} to prove the statement. We compute
\[
\frac{1}{z}\frac{\p \lambda}{\p z} = 2z^{2(n-1)} + \sum_{\alpha=1}^{n-1}2(\alpha-1)v_{\alpha}z^{2(\alpha-2)} + \frac{v_n^2}{z^4}.
\]
Since $\sum_{p\in\{\infty,0\}}\res_{z=p}\frac{dz}{z}= 0$, the right hand side of the above expression is zero in $(\T_{\H^{even}_{2n,2}}, \circ)$. This gives exactly the first relation of $\A_{D_n}$. Also note that
\[
\frac{\p \lambda}{\p v_n} \frac{\p \lambda}{\p v_2} = z^2\cdot (-\frac{v_n}{z^2}) = - v_n,
\]
what gives the second relation of $\A_{D_n}$.
\end{proof}

\begin{remark}
    One notes immediately that setting $v_n = 0$ to $\lambda$ above one recovers the unfolding $\Lambda_A$ with the even variables set to zero.
    This is in fact no miracle but the invariant sectors of the type D and type A Landau--Ginzburg orbifolds (see \cite{BR}).
\end{remark}

Now consider the potential $\F^{\{2n,2\}}$ of the Hurwitz-Frobenius manifold $\H_{0;2n,2}$. Define 
\[
\F_{\text{even}}^{\{2n,2\}} := \F^{\{2n,2\}}\mid_{t_{0,2\alpha =0}, \ \alpha \geq 1}.
\]

\begin{proposition}\label{prop: isomorphism to D_n}
    The function $\F_{\text{even}}^{\{2n,2\}}$ defines the structure of Dubrovin-Frobenius manifold on the space $\H^{even}_{0; \{2n,2\}}$. Moreover, it coincides with the potential $F_{D_n}$ after a rescaling of the variables. 
\end{proposition}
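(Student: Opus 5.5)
The plan is to show that the even locus $t_{0,2\alpha}=0$ is a Frobenius submanifold of $\H_{0;\{2n,2\}}$ via a symmetry argument. Then I identify the restricted structure with $D_n$ through Proposition~\ref{prop: AnSpec for D_n} and the uniqueness of the $D_n$ potential.

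\textbf{Step 1: the involution.} Consider $\sigma: z\mapsto -z$ acting on $\H_{0;\{2n,2\}}$ by $\lambda(z)\mapsto\lambda(-z)$. Since $v_{1,0}=t_{1,0}$ must go to $-t_{1,0}$, the even locus is the fixed locus, where additionally $t_{1,0}=0$. Writing $\lambda = z^{2n}/(2n)+\sum v_{0,a}z^{a-1}+v_{1,1}/(z-v_{1,0})+v_{1,2}/(z-v_{1,0})^2$, the fixed locus is $v_{0,2\alpha}=0$, $v_{1,0}=v_{1,1}=0$.

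From \eqref{eq: flat coordinates 0} and \eqref{eq: flat coordinates i}, $\sigma$ acts linearly and diagonally on the flat coordinates by signs: $t_{0,a}\mapsto (-1)^{a+1}t_{0,a}$ up to the normalization, $t_{1,0},t_{1,1}\mapsto -t_{1,0},-t_{1,1}$, and $t_{1,2}\mapsto \pm t_{1,2}$. The residue formulas \eqref{eq: three-point function} and the pairing are invariant under $z\mapsto -z$, since $dz/\lambda'$ changes sign twice. Hence $\sigma$ preserves $\eta$ and $\circ$, and $\F^{\{2n,2\}}$ is $\sigma$-invariant modulo quadratic terms. A standard lemma then applies. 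The fixed locus of a linear involution preserving the Frobenius structure is a Frobenius submanifold, with potential the restriction of $\F$: third derivatives in odd directions restricted to the fixed locus pair odd with even consistently, and $\eta$ is block diagonal. The unit $\partial_{t_{0,1}}$ must be checked to lie in the even block.

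One subtlety is that the paper's $\F_{\rm even}$ only sets $t_{0,2\alpha}=0$. I would verify that the $\lambda$ of $\H^{even}$, which has no $1/z$ or $t_{1,0}$ terms, means the restriction also fixes $t_{1,0}=t_{1,1}=0$. Alternatively, I would reinterpret: after setting $t_{1,0}=0$ the $t_{1,1}$-dependence is only the quadratic and log part. The rescaling $v_n^2/2=-v_{1,2}$ matches the two parametrizations.

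\textbf{Step 2: identification with $D_n$.} By Proposition~\ref{prop: AnSpec for D_n}, the algebra $(\T,\circ)$ on the even locus is isomorphic to $\A_{D_n}$ with $s_\alpha=v_\alpha$. The pairing is $\eta(X,Y)=\langle e,X,Y\rangle$, which corresponds under $\Psi$ to a residue pairing. A Frobenius structure on $\A_{D_n}$ with unit $\partial_{s_1}$ and an Euler field of the $D_n$ weights is unique up to rescaling the metric; this is Hertling's uniqueness for simple singularities, or direct comparison of flat coordinates. So I would compare the flat coordinates. Restricting \eqref{eq: flat coordinates 0} to odd $a=2\gamma-1$ and computing the residue of $z_0^{2n-a}$ gives exactly the Noumi--Yamada type formula \eqref{eq:Dn flat coordinates} after $s_k\mapsto v_k$, up to scalars; this parallels Remark~\ref{remark: 0-sector coordinates}. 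In addition $t_{1,2}\propto v_n$ gives $t_n=s_n$. Matching flat coordinates, product and unit forces the potentials to coincide up to rescaling and quadratic terms.

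\textbf{Main obstacle.} I expect the hard part to be the flat-coordinate comparison: checking that the odd-$a$ residues of $(2n\lambda)^{(2n-a)/2n}$ with only even powers of $z$ reproduce \eqref{eq:Dn flat coordinates} with the factors $(-\tfrac12)^{|\alpha|-1}\prod(2\gamma-1+2k(n-1))$. I would first try a multinomial expansion of $(1+u)^{(2\gamma-1)/(2n)}$ with $u$ collecting the lower terms, reading off the residue as a coefficient extraction. A fallback is to argue abstractly that the Euler field gradings of the two structures agree, which with Proposition~\ref{prop: AnSpec for D_n} determines the flat structure uniquely.
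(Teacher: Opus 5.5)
Your argument is correct, but it is organized differently from the paper's.

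\textbf{Step 1.} The paper argues directly. The product in the $v$-coordinates has no poles along $v_{0,2\alpha}=0$. The parity of the exponents in \eqref{eq: A_n flat coordinates} (via Remark~\ref{remark: 0-sector coordinates}) shows that the $t_{0,2\gamma}$ vanish there. The pairing $\eta$ stays non-degenerate because it pairs $t_{0,a}$ with $t_{0,2n-a}$. The $D_n$ structure is then read off from Proposition~\ref{prop: AnSpec for D_n}.

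Your involution $z\mapsto -z$ replaces all of this with one mechanism. The parity statement becomes the sign rule $t_{0,a}\mapsto(-1)^{a+1}t_{0,a}$, obtained from \eqref{eq: flat coordinates 0}. More substantially, third derivatives with exactly one $\sigma$-odd index vanish on the fixed locus. This gives closure of the tangent space under $\circ$, which is what makes the restricted function satisfy WDVV. The paper's argument (absence of poles plus non-degeneracy of $\eta$) does not by itself give this.

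Your route also shows that $t_{1,0}=t_{1,1}=0$ is forced, and the subtlety you flag is genuine. For $\{4,2\}$ one has $\p^3\F/\p t_{0,2}\p t_{0,3}\p t_{1,1}=1$ and $\eta(\p_{t_{0,2}},\p_{t_{0,2}})=1$. So on $\{t_{0,2}=0\}$ with $t_{1,1}$ free, the product $\p_{t_{1,1}}\circ\p_{t_{0,3}}$ leaves the locus. The paper avoids this only because $\H^{even}_{0;\{2n,2\}}$ implicitly has its pole at $0$ and no $1/z$ term.

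Your primary reading (the full fixed locus) is therefore the right one. Your fallback of keeping $t_{1,1}$ is not viable, since $\p_{t_{1,1}}$ is $\eta$-paired only with $\p_{t_{1,0}}$. The sign you leave open is $t_{1,2}\mapsto +t_{1,2}$: $v_{1,2}=(t_{1,2}/2)^2$ is fixed, and the other sign would put the fixed locus outside $M$.

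\textbf{Step 2.} The flat-coordinate comparison you call the main obstacle can be skipped. Since $\eta(X,Y)=\eta(e,X\circ Y)$, it suffices to compare the linear forms $\eta(e,\cdot)$. The residue formula gives $\eta(e,\p_{v_{0,a}})=\delta_{a,2n-1}$ and $\eta(e,\p_{v_{1,2}})=0$. This is the coefficient of the top non-leading monomial $z^{2n-2}=x^{n-1}$, which under $\Psi$ is exactly the linear form defining Saito's pairing. So once the products agree by Proposition~\ref{prop: AnSpec for D_n}, the pairings agree up to scale and the potentials agree up to quadratic terms. Neither the Noumi--Yamada formulas nor Hertling's uniqueness is needed. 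The paper leaves this metric step implicit as well.

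The same computation shows that the fixed locus must retain $v_{0,2n-1}$ and is $(n+1)$-dimensional. With $k_0=2n$ it therefore matches $\Lambda_{D_{n+1}}$ rather than $\Lambda_{D_n}$. That off-by-one, and the missing $z^{2n-2}$ term in the printed $\lambda$ of $\H^{even}_{0;\{2n,2\}}$, are inherited from the paper rather than introduced by you.
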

\begin{proof} From Proposition~\ref{prop: SpecAn} we conclude that the multiplication structure of $\H_{0; \{2n,2\}}$ in the $v_{\bullet}$ coordinates does not have the poles at $v_{0,2\alpha}$ for any $\alpha$. It follows that the structure of product $\circ$ is correctly restricted to the locus $v_{0,2} = v_{0,4} = \dots = v_{0,2n} =0$. 

From Remark~\ref{remark: 0-sector coordinates} we obtain that $t_{0,2\gamma}$ vanishes under the restriction $v_{0,2} = v_{0,4} = \dots = v_{0,2n} =0$. If the summand in Equation~\ref{eq: A_n flat coordinates} does not contain $v_{0,2j}$ as a factor after the substitution $s_k \mapsto v_{0,k}$, then the left hand side of the equation $\sum_{k=1}^{2n-1} (2n+1-k) \alpha_k = 2n+1 - 2\gamma$ is even, whereas the right hand side is odd.

Note also that $\eta$ under this constraint is still non-degenerate, since $\eta(\frac{\p}{\p t_{0,\alpha}},\frac{\p}{\p t_{0,2n-\alpha}}) = 1$ are the only non-zero pairings. This shows that $\F_{\text{even}}^{\{2n,2\}}$ is the solution to the WDVV equation and defines the structure of Dubrovin-Frobenius manifold. From Proposition \ref{prop: AnSpec for D_n} we conclude that this structure is isomorphic to Dubrovin-Frobenius manifold of $D_n$ singularity.
\end{proof}

Let $(F^c, F^o)$ be a solution of the open WDVV equation given by Theorem~\ref{theorem: main} for $F^c= \F^{\{2n,2\}}$, and let this pair be restricted to the locus $t_{0,2\alpha}=0,\ \alpha \geq 1$. By this restriction, we move from the system of PDEs only the equations, where there are derivatives by the variables $t_{0,2\alpha}$. If all indices are odd, then the equations are still the same since $\eta$ pairs $\frac{\d}{\d t_{0,2\alpha-1}}$ only with $\frac{\d}{\d t_{0,2n - (2\alpha-1)}}$. Thus, the pair $\left.(F^c, F^o)\right|_{t_{0,2\alpha}=0,\ \alpha \geq 1}$ is also a solution of the open WDVV equation.

The following proposition finishes the proof of Theorem~\ref{theorem: A and D}.

\begin{proposition}\label{prop: open potential for the even space}
    Let $F^o$ be an open potential given by part $(ii)$ of Theorem~\ref{theorem: main} for ${F^c = \F^{\{2n,2\}}}$. Then $\left.F^o\right|_{t_{0,2\alpha}=0,\ \alpha \geq 1}$ coincides with the open potential $F^{o}_{D_n}$ after the rescaling of the variables.
\end{proposition}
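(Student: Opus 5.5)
The plan is to use the explicit formula of Theorem~\ref{theorem: expl form} rather than the abstract definition $F^o = \p_{t_{N+1,1}}\F^L|$. Here $K=\{2n,2\}$, so $N=1$, $L=\{2n,2,m\}$, and the additional open variable is $t_{2,0}$. On the even locus I read the statement as the paper's identification $\H^{even}_{0;\{2n,2\}}$: the sector-$1$ pole sits at $z=0$ and only the $v_{1,2}$ term survives. This means I also specialise $t_{1,0}=t_{1,1}=0$, as in Proposition~\ref{prop: isomorphism to D_n}. I will split $F^o$ into the integral term $\int\widetilde\lambda\,dz|_{z=t_{2,0}}$ and the correction sum over multi-indices $\alpha$, and treat the two pieces separately.

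\textbf{Step 1: the correction sum vanishes.} The second summand of Theorem~\ref{theorem: expl form} has $k_0=2n$. It runs over $\alpha$ with $\sum_{i=1}^{2n-1}(2n+1-i)\alpha_i=2n+1$. After setting $t_{0,2\alpha}=0$, only monomials with $\alpha_i=0$ for all even $i$ survive. For odd $i$ the weight $2n+1-i$ is even, so the left-hand side is even while the right-hand side $2n+1$ is odd. Hence every surviving term is zero, and this is the same parity argument as in the proof of Proposition~\ref{prop: isomorphism to D_n}.

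\textbf{Step 2: compute the integral term.} The restriction $t_{0,2\gamma}=0$ forces $v_{0,2\gamma}=0$ by Remark~\ref{remark: 0-sector coordinates}. Rescaling $\widetilde\lambda$ to the normalisation of $\H^{even}_{0;\{2n,2\}}$, it becomes $\frac{z^{2n}}{n}+\sum_{\alpha=1}^{n-1}v_\alpha z^{2(\alpha-1)}-\frac{v_n^2}{2z^2}$. Here $v_\alpha$ is $v_{0,2\alpha-1}$ up to a constant, and $v_n$ is proportional to $v_{1,2}^{1/2}$. By \eqref{eq: flat coordinates i}, $v_{1,2}=(t_{1,2}/2)^2$, so $v_n$ is a linear multiple of $t_{1,2}$. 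Integrating in $z$ and evaluating at $z=t_{2,0}$ gives
\[
\frac{t_{2,0}^{2n+1}}{n(2n+1)}+\sum_{\alpha=1}^{n-1}\frac{v_\alpha\, t_{2,0}^{2\alpha-1}}{2\alpha-1}+\frac{v_n^2}{2t_{2,0}}.
\]
I then compare this term by term with \eqref{def: D_n open potential}. Under $s_k\leftrightarrow v_k$ and $s_{n+1}\leftrightarrow c\,t_{2,0}$ for a suitable constant $c$ (a power of $\sqrt2$), the three groups match: the monomials $s_ks_{n+1}^{2k-1}/(2^{k-1}(2k-1))$, the top power, and $s_n^2/(2s_{n+1})$.

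\textbf{Step 3: match the flat coordinates.} Via Remark~\ref{remark: 0-sector coordinates}, the odd flat coordinates $t_{0,2\gamma-1}$ are given by \eqref{eq: A_n flat coordinates} with $s_k\mapsto v_{0,k}$ and the even $v_{0,k}$ set to zero. After the substitution $v_{0,2k-1}\propto s_k$, I expect this to reproduce \eqref{eq:Dn flat coordinates}. The product $\prod(\gamma'+k(2n))$ with $\gamma'=2\gamma-1$ should become $\prod(2\gamma-1+2k(n-1))$ up to the normalising powers of $2$ and $-\tfrac12$. Finally $t_{1,2}\propto s_n$.

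\textbf{Main obstacle.} I expect the hardest part to be choosing a single rescaling of $z$, $t_{0,\bullet}$, $t_{1,2}$ and $t_{2,0}$ that is consistent with three things at once. It must agree with the rescaling in Proposition~\ref{prop: isomorphism to D_n} identifying $F^c$ with $F_{D_n}$, it must turn the $A_{2n-1}$ flat-coordinate formula into the $D_n$ one, and it must match the coefficients of \eqref{def: D_n open potential}. I note that the exponents in the two formulas, $2n$ in $\lambda$ versus $2n-1$ in $F^o_{D_n}$, must be reconciled through the index conventions. I would first fix the rescaling from the term $s_n^2/(2s_{n+1})$ and the top-degree term, and then verify the middle terms. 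The quasihomogeneity of Proposition~\ref{prop: quasihomogenity} pins down the remaining powers of $2$.
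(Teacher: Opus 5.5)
Your route is the paper's route. You split $F^o$ using Theorem~\ref{theorem: expl form}, kill the correction sum by parity, and identify $\int\widetilde\lambda\,dz|_{z=t_{2,0}}$ with $F^o_{D_n}$; the paper asserts that last step in one line ``after the rescaling of the variables''. Your Step~1 is exactly the paper's argument and works for any even $k_0$. Your explicit specialisation $t_{1,0}=t_{1,1}=0$ is also needed and only implicit in the paper. Without it $F^o$ keeps the terms $t_{1,1}\log(t_{2,0}-t_{1,0})$ and $t_{1,2}^2/(4(t_{1,0}-t_{2,0}))$, as in Example~2.

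The gap is the obstacle you flagged. No rescaling can remove it, because rescalings preserve degrees and the number of variables. On the locus $t_{0,2\alpha}=0$, $t_{1,0}=t_{1,1}=0$, for $K=\{2n,2\}$ one has
\[
\widetilde\lambda=\frac{z^{2n}}{2n}+\sum_{\alpha=1}^{n}v_{0,2\alpha-1}z^{2(\alpha-1)}+\frac{v_{1,2}}{z^{2}} .
\]
So your Step~2 silently drops $v_{0,2n-1}$, the coefficient of $z^{2n-2}$. As a result $\int\widetilde\lambda\,dz$ has $n$ middle terms and top term $t_{2,0}^{2n+1}$, whereas \eqref{def: D_n open potential} has $n-1$ middle terms and top term $s_{n+1}^{2n-1}$.

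Step~3 fails the same way. Restrict \eqref{eq: A_n flat coordinates} for $A_{2n-1}$ to odd indices $\gamma=2\gamma'-1$, $k=2k'-1$. This gives the constraint $\sum_{k'}(n+1-k')\alpha_{k'}=n+1-\gamma'$ and the factor $\prod_j(2\gamma'-1+2jn)$, not $\prod_j(2\gamma'-1+2j(n-1))$.

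All of this is exactly the $D_{n+1}$ data: the even locus of $\H_{0;\{2n,2\}}$ is $(n+1)$-dimensional. The correct model for $D_n$ has leading term $\frac{z^{2n-2}}{n-1}$, i.e.\ it is the even locus of $\H_{0;\{2n-2,2\}}$. Proposition~\ref{prop: AnSpec for D_n} forces this. With $x=z^2$, the expression $\frac{1}{2z}\p_z\lambda$ must reproduce $\p_x\Lambda_{D_n}$, whose leading term is $x^{n-2}=z^{2n-4}$, whereas $\frac{z^{2n}}{n}$ produces $x^{n-1}$.

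The paper's proof writes the same mis-indexed $\lambda$, so its one-line identification hides this slip. Replace $\{2n,2\}$ by $\{2n-2,2\}$, or equivalently $D_n$ by $D_{n+1}$. Then your Steps~1--3 go through as planned, with the constants fixed by matching the $s_n^2/(2s_{n+1})$ term and the top-degree term.
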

\begin{proof}
    By Theorem~\ref{theorem: expl form}, $F^{o} = (\int \widetilde{\lambda}(z,\bt)dz)|_{z=t_{2,0}} + A$, where $\widetilde{\lambda}(z,\bt) := \left.\lambda(z,\bt) \right|_{t_{2,a} = 0, \ a \ge 1}$ for
    \[
    \lambda (z, \bt)= \left.\left(\frac{z^{2n}}{n} + \sum_{\alpha=1}^{n-1}v_{0,\alpha}z^{2(\alpha-1)} - \frac{v_{1,2}^2}{2z^2} + \sum_{\alpha=1}^{m}v_{2,\alpha}(z-v_{2,0})^{-\alpha} \right)\right|_{v_{\bullet} = v_{\bullet}(t_{\bullet})}
    \]
    and
    \[
    A = \sum_{\substack{\alpha_1,\dots,\alpha_{k_0-1} \ge 0 \\ \sum_{i=1}^{2n-1} (2n+1-i) \alpha_i = 2n+1}} (|\alpha|-2)! \prod_{i=1}^{k_0-1} \frac{t_{0,i}^{\alpha_i}}{\alpha_i!}.
    \]

It follows from Definition \ref{def: D_n open potential} that $(\int \widetilde{\lambda}(z,\bt)dz)|_{z=t_{2,0}}$ coincides with the open potential $F^o_{D_n}$ after the rescaling of variables. Thus, it suffices to prove that $A |_{t_{0,2\alpha} = 0, \ \alpha \ge 1} = 0$.

If the summand of $A$ does not contain $t_{0,2j}$ as a factor, then the left hand side of the equation $\sum_{i=1}^{2n-1} (2n+1-i) \alpha_i = 2n+1$ is even, whereas the right hand side is odd. This gives the claim.
\end{proof}

\end{document}